\numberwithin{equation}{section}
\newtheorem{theorem}{Theorem}[section]
\newtheorem{lemma}[theorem]{Lemma}
\newtheorem{proposition}[theorem]{Proposition}
\newtheorem{remark}[theorem]{Remark}
\newcommand{\bke}[1]{\left ( #1 \right )}
\newcommand{\norm}[1]{ \| #1  \|}
\newcommand\al{\alpha}
\newcommand\be{\beta}
\newcommand\ga{\gamma}
\newcommand\de{\delta}
\newcommand\ep{\epsilon}
\renewcommand\th{\theta}
\newcommand\la{\lambda}
\newcommand\ph{\varphi} 
\newcommand\om{\omega}
\newcommand\Ga{\Gamma}
\newcommand\De{\Delta}
\newcommand\La{\Lambda}
\newcommand{\Om}{\Omega}
\newcommand{\R}{\mathbb{R}}
\newcommand{\N}{\mathbb{N}}
\renewcommand{\div}{\mathop{\rm div}}
\newcommand{\supp} {\mathop{\mathrm{supp}}}
\newcommand{\dist} {\mathop{\mathrm{dist}}}
\newcommand{\td}{\tilde}
\renewcommand{\bar}[1]{\overline{#1}}
\newcommand{\lec}{{\ \lesssim \ }}
\newcommand{\cR}{\mathcal{R}}
\newcommand{\Leb}{{\operatorname{Leb}}}
\newcommand{\EQ}[1]{\begin{equation}\begin{split} #1 \end{split}\end{equation}}
\newcommand{\EQN}[1]{\begin{equation*}\begin{split} #1 \end{split}\end{equation*}}
\newcommand{\pa}{\partial}
\newcommand{\na}{\nabla}
\newcommand{\Del}{\Delta}
\newcommand{\del}{\delta}
\newcommand{\ddt}{\frac{d}{dt}}
\begin{document}

\title{Strong ill-posedness of logarithmically regularized\\ 2D Euler equations in the borderline Sobolev Space}
\author{Hyunju Kwon\footnote{
Hyunju Kwon, 
Department of Mathematics, University of British Columbia, 
Vancouver, BC V6T 1Z2, Canada;
e-mail: hkwon@math.ubc.ca}
}
\date{}

\maketitle

\begin{abstract}
Logarithmically regularized 2D Euler equations are active scalar equations with the non-local velocity $u = \na^\perp \De^{-1}T_\ga \om$ for the scalar $\om$. Two types of the regularizing operator $T_\ga$ with a parameter  $\ga> 0$ are considered: $T_\ga = \ln^{-\ga} (e+|\na|)$ and $T_\ga = \ln^{-\ga} (e-\De)$. These models regularize the 2D Euler equation for the vorticity (conventionally corresponding to the $\ga=0$ case), which results in their local well-posedness in the borderline Sobolev space $H^1(\R^2)\cap\dot{H}^{-1}(\R^2)$ when $\ga>\frac 12$. In this paper, we examine the regularized models in the remaining regime $\ga\le \frac 12$  and establish the strong ill-posedness in the borderline space. This completely solves the well-posedness problem of the regularized models in the borderline space by closing the gap between the local well-posedness result for $\ga>\frac 12$ and the strong ill-posedness for $\ga = 0$. 
\end{abstract}

\section{Introduction}

The incompressible Euler equation describes the behavior of homogeneous, inviscid and volume-preserving fluids,
\begin{align*}
\begin{cases}
\pa_t u + (u\cdot \na )u +\na p =0 &\quad(x,t)\in\R^n \times \R\\
\div u =0\\
u|_{t=0} = u_0.
\end{cases}
\end{align*}
Two unknowns $u$ and $p$ present the fluid velocity and pressure, respectively. For simplicity, we often work in the vorticity formulation for the Euler. In particular, the vorticity $\om = -\pa_2 u_1 + \pa_1 u_2$ in the two-dimensional space solves \begin{align}\label{Euler.eq}
\begin{cases}
\pa_t \om + (u\cdot \na )\om =0 &\quad(x,t)\in\R^2 \times \R\\
u = \na^\perp \psi, \quad \De \psi = \om\\
\om|_{t=0} = \om_0, \tag{E}
\end{cases}
\end{align}
where $\na^\perp = (-\pa_2,\pa_1)$. Then, the velocity $u$ can be recovered from the  vorticity $\om$ by the Biot-Savart law,
\[
u(x,t) = \text{p.v. }\frac 1{2\pi} \int \frac{(x-y)^\perp}{|x-y|^2} \om(y,t) dy,
\]
where $x^\perp = (x_1,x_2)^\perp = (-x_2,x_1)$.

In the past decades, the local well-posedness of the Euler equations has been well established for solutions with suitable regularity. For example, based on the standard energy method, 
the local well-posedness holds in the Sobolev spaces $W^{s,p}(\R^n)$, $s>\frac np+1$, $s\geq 1$, \cite{Majda, CF88}. In the solution spaces with threshold regularity, however, the well-posedness of the Euler equation has been a long-standing open problem. To tackle this, many efforts have been made. One way of obtaining the well-posedness is to work on a relatively ``regular" solution space among the borderline spaces, in the sense that the velocity in such solution spaces is under control in the Lipschitz space. Then, the local well-posedness follows from the usual energy method. Indeed, the Euler equation in $\R^n$, $n\geq 2$, is known to be well-posed local-in-time in the critical Besov spaces $B^{\frac np+1}_{p,1}(\R^n)$ for $1<p\le \infty$, see \cite{Vishik98, Vishik99, Chae04, PP04}. However, the borderline Sobolev space $H^{\frac n2+1}(\R^n)$ is not included in these critical Besov spaces. In fact, the Lipschitz norm of the velocity in the critical Sobolev space is out of control because the Sobolev embedding barely fails. 

To get better understanding of the behavior of the Euler flows in the critical Sobolev space, regularized Euler equations are introduced, see \cite{CCW11, CW12}. 
In \cite{CW12}, Chae and Wu study the logarithmically regularized 2D Euler equations,
\begin{align}\label{main.eq}\tag{LE}
\begin{cases}
\pa_t \om + (u\cdot \na) \om =0,		&(x,t)\in \R^2\times\R \\
u = \na^{\perp}\psi, \quad \Del \psi = T\om, \\
\om|_{t=0} = \om_0,
\end{cases} 
\end{align}
with the Fourier multiplier $T(|\na|)$ satisfying
\EQ{\label{int.con}
\int_1^\infty \frac{T^2(r)}{r} dr <+\infty.
}
Such operator $T$ regularizes the velocity in the Euler vorticity equation \eqref{Euler.eq} at the level of logarithm of the Laplacian. The particular integrability assumption \eqref{int.con} on $T$ is imposed to guarantee the local well-posedness of the regularized model in the critical Sobolev space. 
As typical examples of $T$ satisfying \eqref{int.con}, we have 
\EQ{\label{T.operator}
\widehat{T_{\ga}\om}(k) = \ln^{-\ga}(e+|k|^2)\hat{\om}(k),
\quad
\widehat{T_{\ga}\om}(k) = \ln^{-\ga}(e+|k|) \widehat{\om}(k), \quad\forall k\in \R^2.
}
for $\ga>\frac 12$.
In this paper, we restrict our attention to these two typical cases in the extended region of $\ga$, $\ga>0$. From now on, we use the abbreviation \eqref{main.eq} only when $T=T_\ga$. Conventionally, the multiplier $T_\ga$ with $\ga=0$ is considered as the identity operator. In other words, \eqref{main.eq} with $\ga =0$ corresponds to the 2D Euler vorticity equation.

The global well-posedness result of the 2D Euler vorticity equation (the case $\ga=0$ in \eqref{main.eq}) in the subcritical spaces $W^{s,p}(\R^2)\cap \dot{H}^{-1}(\R^2)$, $p>\frac 2s$ can be extended to that of \eqref{main.eq} for $\ga\geq 0$ (See \cite{CW12}). It follows from the usual energy method which requires two key estimates: commutator estimates and Sobolev inequalities. The critical\ space is determined by the Sobolev embedding
\[
\norm{\na u}_\infty = \norm{D\na^\perp\De^{-1}T_\ga \om }_\infty \lec \norm{\om}_{W^{s,p}(\R^2)}, \quad p>\frac 2s.
\]
In \cite{CW12}, the regularized velocity $u=\na^\perp \De^{-1}T_\ga \om$ leads to the local well-posedness of \eqref{main.eq} even in the critical space $H^1(\R^2)\cap \dot{H}^{-1}(\R^2)$ for $\ga>\frac 12$. Then, for $\ga\geq \frac 32$, the global lifespan of the local-in-time solutions is obtained by Dong and Li \cite{Dong2015}. On the other hand, the \textit{strong ill-posedness} of 2D Euler equation ($\ga=0$) in the borderline space $H^1(\R^2)\cap \dot{H}^{-1}(\R^2)$ is established by Bourgain and Li \cite{BL15}.  Later, Elgindi and Jeong \cite{Elgindi2017} prove the ill-posedness for some special initial data on the torus $\mathbb{T}^2$ with a different approach based on Kiselev-\v{S}ver{\'a}k \cite{KS14}). However, the well-posedness of the regularized model \eqref{main.eq} in the intermediate regime $0<\ga \le \frac 12$ still remains open. 

In this paper, we prove that the logarithmically regularized 2D Euler equations \eqref{main.eq} for $0< \ga\le \frac 12$ are strongly ill-posed in the critical Sobolev space $H^1(\R^2)\cap \dot{H}^{-1}(\R^2)$. The ill-posedness in the strong sense is defined as in \cite{BL15}. Namely, for any given compactly supported smooth initial data, an arbitrarily small perturbation in the borderline space can be always found such that the perturbed solution leaves the borderline space instantaneously. Our result closes the gap between $\ga=0$ (ill-posed) and $\ga>\frac 12$ (well-posed) and give complete answers to well/ill-posedness questions of logarithmically regularized 2D Euler equations. Furthermore, it says that even for the regularized 2D Euler equation, the strong ill-posedness holds in the same critical space of the Euler. 

We consider two types of perturbations: one has the non-compact support and the other is compactly supported.  
\begin{theorem}[Non-compact case]\label{thm.noncpt}
	Let $0<\ga\leq \frac 12$ and $a\in C_c^{\infty}(\R^2)$. Then, for any $\ep>0$, we can find a small perturbation $\zeta\in C^{\infty}(\R^2)$ in the sense of
	\[
	\norm{\zeta}_{\dot{H}^1(\R^2)}+\norm{\zeta}_{L^1(\R^2)}+ \norm{\zeta}_{L^{\infty}(\R^2)} <\ep
	\]
	such that for the perturbed initial data from $a$, we have a unique classical solution $\om$ to \eqref{main.eq}
		\[
		\begin{cases}
		\pa_t \om + u\cdot \na \om =0,		&(x,t)\in \R^2	\times (0,1]\\
		u = \na^{\perp}\psi, \quad \Del \psi = T_{\ga}\om, \\
		\om|_{t=0} = a+\zeta,
		\end{cases}
		\] 
	satisfying $\om(\cdot,t)\in C^{\infty}(\R^2)$ for $0\leq t\leq 1$ and $\om\in C([0,1];L^1(\R^2)\cap L^{\infty}(\R^2))$, but the solution $\om$ leaves the critical Sobolev space instantaneously. i.e., for each $0< T\leq 1$,
\EQ{\label{infty.critical}
		\norm{\om}_{L^{\infty}([0,T];\dot{H}^1(\R^2))} = +\infty.
}	
\end{theorem}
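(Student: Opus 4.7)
The strategy mirrors the Bourgain--Li superposition scheme of \cite{BL15}, adapted to accommodate the logarithmic regularizer $T_\ga$. I would take $\zeta$ to be a countable sum of well-separated ``inflation seeds,'' each localized at a distinct scale and location; each seed, in isolation, drives the $\dot{H}^1$-norm of the solution above a prescribed large value $M_k$ at its own small firing time $t_k$; by placing the seeds at pairwise disjoint points with shrinking diameters, their effects on the full solution stay essentially independent on $[0, t_k]$, producing a sequence $t_k \downarrow 0$ along which $\norm{\om(t_k)}_{\dot{H}^1} \to \infty$.

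The first and main step is a local inflation lemma: given $M > 0$ and $\eta > 0$, build a smooth, compactly supported seed $\zeta_{M,\eta}$ with $\norm{\zeta_{M,\eta}}_{\dot{H}^1} + \norm{\zeta_{M,\eta}}_{L^1} + \norm{\zeta_{M,\eta}}_{L^\infty} < \eta$ and a firing time $t_M \in (0, 1/M)$ such that the solution of \eqref{main.eq} from the initial datum $a + \zeta_{M,\eta}$ satisfies $\norm{\om(t_M)}_{\dot{H}^1} \geq M$. A natural ansatz is $\zeta_{M,\eta}(x) = c\, \chi((x-x_0)/\de)$ for a fixed profile $\chi$ with a hyperbolic-saddle structure, amplitude $c = c(M,\eta)$ small enough to make all three norms smaller than $\eta$ (using the scale-invariance of $\dot{H}^1$ in $\R^2$), and scale $\de$ chosen very small. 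An explicit Biot--Savart computation with the multiplier $T_\ga$ should yield
\EQN{
\norm{\na u}_{L^\infty(B_{2\de}(x_0))} \gec c\, \log^{1-\ga}(1/\de),
}
the exponent $1-\ga$ being precisely the $\log$-cost left over after the $\log^{-\ga}$ damping of $T_\ga$, which is still unbounded as $\de \downarrow 0$ for every $\ga<1$. Integrating this shear along the Lagrangian flow $X_t$ and aligning $\na \chi$ with the compressing eigendirection of $\na X_t$ then gives $\norm{\om(t)}_{\dot{H}^1} \gec c\, \exp\bkt{c_*\, c\, t\, \log^{1-\ga}(1/\de)}$; tuning $\de$ doubly (or higher) exponentially small in $c$ and $t_M$ closes the inflation.

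In step two, I would pick sequences $M_k \to \infty$, $\eta_k = \ep \cdot 2^{-k}$ and $t_{M_k} \downarrow 0$, translate the seeds $\zeta_{M_k, \eta_k}$ to disjoint points $x_k$ outside $\supp(a)$ so that each seed's domain of influence (controlled via the uniform $L^\infty$-bound on $\om$ together with the Biot--Savart kernel) stays confined to a small disjoint ball on $[0, t_{M_k}]$, and set $\zeta := \sum_k \zeta_{M_k, \eta_k}(\cdot - x_k)$. Summing the norms gives $\norm{\zeta}_{\dot{H}^1 \cap L^1 \cap L^\infty} < \ep$; the standard Yudovich-type $L^1 \cap L^\infty$ theory for \eqref{main.eq} produces the smooth classical solution $\om$; and the near-independence of the seeds forces $\norm{\om(t_{M_k})}_{\dot{H}^1} \geq M_k$ for every $k$. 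Since $t_{M_k} \downarrow 0$, this yields \eqref{infty.critical}.

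The principal obstacle I expect is the local inflation lemma in the boundary case $\ga = \tfrac12$, where the gain $\log^{1-\ga} = \log^{1/2}$ from the logarithmically-regularized Biot--Savart computation is barely enough to overcome the smallness of $c$. This forces a delicate simultaneous choice of $c$, $\de$, and $t_M$, with $\de$ doubly (or higher) exponentially small relative to $c$, and also requires ruling out cancellation between the seed's self-induced hyperbolic structure and the uniformly bounded smooth drift produced by $a$ --- most naturally handled by placing each seed inside a region where $a \equiv 0$, so that on the short interval $[0, t_{M_k}]$ the dynamics near $x_k$ is driven purely by the seed itself.
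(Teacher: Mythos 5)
Your overall architecture (local inflation seeds plus well-separated superposition) matches the paper's, but the core of your local inflation lemma contains a genuine error that the rest of the argument cannot survive. For a single-scale profile $\zeta_{M,\eta}=c\,\chi((x-x_0)/\de)$ with odd--odd (saddle) symmetry, the relevant quantity is
$-\pa_{12}\De^{-1}T_\ga\zeta_{M,\eta}(x_0)=\int K_{12}(y)\,\zeta_{M,\eta}(x_0+y)\,dy$, and since the kernel satisfies $K_{12}(y)\sim \frac{y_1y_2}{|y|^4}\ln^{-\ga}(e+1/|y|)$ while the seed lives on the single annulus $|y|\sim\de$, this integral is of size $c\,\ln^{-\ga}(1/\de)$ --- \emph{smaller} than $c$, not $c\log^{1-\ga}(1/\de)$ as you claim. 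The factor $\log(1/\de)$ only appears when the vorticity has comparable amplitude on all $\sim\log(1/\de)$ dyadic scales between $\de$ and $1$ (the Bahouri--Chemin mechanism); but filling all scales with constant amplitude $c$ makes $\norm{\na\zeta}_{L^2}^2\sim c^2\log(1/\de)$ blow up, destroying the smallness of the perturbation. With $\norm{\na u}_{L^\infty}\lesssim c\le\eta$ near the seed, the Lagrangian deformation on $[0,1]$ is bounded by $e^{O(\eta)}$, and no choice of $\de$ (doubly exponential or otherwise) produces any inflation. This is precisely the tension the paper resolves with the lacunary data $g_A=C_A\sum_j j^{-\ga}\rho(2^jx)$: the decaying coefficients $j^{-\ga}$ and prefactor $C_A=(\sqrt{\ln A}\,\ln\ln A)^{-1}$ are tuned (differently for $\ga<\tfrac12$ and $\ga=\tfrac12$) so that the deformation-driving integral $G_A\gtrsim\sqrt{\ln A}/\ln\ln A$ diverges while $\norm{\na g_A}_2\sim 1/\ln\ln A$ vanishes; the ratio $\sqrt{\ln A}$ is the entire engine of the proof, and it is invisible in a one-scale ansatz.

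A second, smaller gap: even granted a large Lagrangian deformation $L$, "aligning $\na\chi$ with the compressing eigendirection" does not by itself yield $\dot H^1$ inflation, because the seed's own gradient has $L^2$ norm below $\eta$ and the stretching is only known to be large near one point. The paper's Proposition \ref{H1.norm.inflation.prop} handles this with a second, high-frequency perturbation $\frac{1}{20k\sqrt L}\cos(kx_1)b(x)$ planted at a point where an entry of $D\phi$ exceeds $L$; its $\dot H^1$ norm is $L^{-1/2}$ but after transport it contributes $\dot H^1$ mass $\sim L^{1/2}$. Your patching step is in the right spirit (it is essentially Proposition \ref{prop.patching.noncompact}), but without a correct multi-scale local construction and the two-stage (deformation, then oscillatory perturbation) inflation mechanism, the proof does not go through.
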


\begin{remark} 
The strong ill-posedness requires that the perturbed solution doesn't exist in the critical space at any positive time. On the other hand, to identify the perturbed solution, we need its unique existence in some space. \end{remark}

\begin{remark} \label{infty.critical.d} The perturbed solution achieves \eqref{infty.critical} in the sense that there exists a sequence of disjoint sets $Q_n  = [t_s^n,t_e^n]\times O_n$ on which $\norm{\om_n}_{L^\infty_t\dot{H}^1(Q_n)} >n $, where $t_s^n<t_e^n$, $\lim_{n \to \infty} t_e^n=0$, and $O_n$ is an open bounded set in $\R^2$.  

\end{remark}

\begin{theorem}[Compact case]\label{thm.cpt}
	Let $0<\ga \leq \frac 12$ and $a\in C_c^{\infty}(\R^2)$ which is odd in $x_2$. Then, for any $\ep>0$, we can find a small perturbation $\zeta\in C_c(\R^2)$ in the sense of
		\[
		\norm{\zeta}_{\dot{H}^1(\R^2)}+ \norm{\zeta}_{L^{\infty}(\R^2)} 
		+\norm{\zeta}_{L^1(\R^2)} + \norm{\zeta}_{\dot{H}^{-1}(\R^2)}
		<\ep
		\]
	such that for the perturbed initial data from $a$, we have a unique solution $\om:\R^2\times [0,1]\to \R$ in $C([0,1];C_c(\R^2))$ to \eqref{main.eq} 
		\[
		\begin{cases}
		\pa_t \om + u\cdot \na \om =0,		&(x,t)\in \R^2	\times (0,1]\\
		u = \na^{\perp}\psi, \quad \Del \psi = T_{\ga}\om, \\
		\om|_{t=0} = a+\zeta,	
		\end{cases}
		\] 
	satisfying $L^{\infty}$-norm preservation, but	the solution $\om$ leaves the critical Sobolev space instantaneously.
	\end{theorem}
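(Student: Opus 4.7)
My plan is to adapt the construction underlying Theorem~\ref{thm.noncpt} to the symmetric, compactly supported setting; the $x_2$-oddness assumed on $a$ is the key extra ingredient that makes control in $\dot{H}^{-1}$ compatible with both compact support and the instability mechanism. Concretely, I would enforce $x_2$-odd symmetry on $\zeta$ as well. Since $T_\ga$ commutes with the reflection $(x_1,x_2)\mapsto(x_1,-x_2)$, the class of $x_2$-odd vorticities is preserved by \eqref{main.eq}, so the perturbed solution $\om$ stays odd on $[0,1]$. Two consequences follow: the axis $\{x_2=0\}$ is invariant and Biot--Savart forces $u_1$ even, $u_2$ odd in $x_2$, producing the pure-strain hyperbolic structure near the axis that drives $\dot{H}^1$-growth; and oddness forces $\int\zeta\,dx=0$, which combined with compact support lets me write $\zeta=\pa_2\Psi$ for a compactly supported $\Psi\in L^2$, rendering $\|\zeta\|_{\dot{H}^{-1}}\lec\|\Psi\|_{L^2}$ controllable for the bumps chosen below.

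Second, I would construct $\zeta=\sum_{n\geq n_0}\tilde\zeta_n$ as a sum of $x_2$-oddly-symmetrized bumps with pairwise disjoint supports accumulating at a single hyperbolic point on the axis. A concrete choice is $\tilde\zeta_n(x)=\de_n\ph(\la_n(x-p_n))-\de_n\ph(\la_n(x-\bar p_n))$ with $p_n=(x_n,h_n)$, $h_n>0$, $\bar p_n$ the reflection through the axis, $\ph\in C_c^\infty(\R^2)$ fixed, and scales $\la_n\to\infty$, amplitudes $\de_n\to 0$ tuned so that $\sum_n\bigl(\|\tilde\zeta_n\|_{\dot{H}^1}+\|\tilde\zeta_n\|_{L^1}+\|\tilde\zeta_n\|_{L^\infty}+\|\tilde\zeta_n\|_{\dot{H}^{-1}}\bigr)<\ep$. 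The $\dot{H}^{-1}$ summability is the delicate bound and relies crucially on the mean-zero cancellation inherited from the oddness.

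Third, the core instability analysis parallels the proof of Theorem~\ref{thm.noncpt}: for each $n$, inside a short time window $[t_s^n,t_e^n]$ with $t_e^n\to 0$, the localized contribution of $\tilde\zeta_n$ is sheared by the hyperbolic strain near the axis (generated collectively by $a$ and the preceding bumps), and its $\dot{H}^1$-seminorm on a shrinking open set $O_n$ exceeds $n$. The regularization $T_\ga$ damps the strain only by a logarithmic factor, and the hypothesis $\ga\le\tfrac12$ is precisely what leaves the integrated logarithm divergent, so the mechanism survives. Disjointness of the sets $Q_n=[t_s^n,t_e^n]\times O_n$ then yields \eqref{infty.critical} in the sense of Remark~\ref{infty.critical.d}.

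Finally, well-posedness in $C([0,1];C_c(\R^2))$ together with $L^\infty$-preservation follows from an Osgood--Yudovich argument: since $\om(t)\in L^1\cap L^\infty$ and $u=\na^\perp\De^{-1}T_\ga\om$ inherits a modulus of continuity strictly better than log-Lipschitz because of $T_\ga$, the characteristic ODE has a unique global flow $\Phi_t$, $\om(\cdot,t)=\om_0\circ\Phi_t^{-1}$ is the unique solution in $C([0,1];C_c(\R^2))$, and both the $L^\infty$-norm preservation and compactness of $\supp\om(\cdot,t)$ are immediate. \emph{The main obstacle is the sharp quantitative instability estimate of Step~3 at the borderline $\ga=\tfrac12$}, where the log count from $T_\ga$ must be balanced exactly against the integrated hyperbolic stretching; a secondary obstacle is arranging all four norms of the bumps $\tilde\zeta_n$ to be summable while still producing the localized blow-up---this is where the oddness is essential, since it is what carries the $\dot{H}^{-1}$ bound.
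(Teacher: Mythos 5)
Your overall architecture (odd-in-$x_2$ bumps with disjoint supports accumulating at a point on the axis, oddness used to control $\dot H^{-1}$, Yudovich-type uniqueness) matches the paper's skeleton, but the core instability mechanism you propose is not the one that works here, and this is a genuine gap. You posit that each bump $\tilde\zeta_n$ is passively ``sheared by the hyperbolic strain near the axis (generated collectively by $a$ and the preceding bumps).'' But $a$ is an \emph{arbitrary} odd function in $C_c^\infty$ --- it may be identically zero --- and the preceding bumps have amplitudes tending to zero, so no nontrivial background strain is guaranteed. In the paper the large Lagrangian deformation is \emph{self-generated}: the kernel lower bound of Lemmas \ref{estimate.K} and \ref{estimate.tdK} gives $\la(t)=-\pa_{12}\De^{-1}T_\ga\om(0,t)\gtrsim e^{-\norm{D\phi}_\infty^4}G$ with $G$ an integral of the local datum itself (Proposition \ref{large.lagrangian}), and the role of $a$ and the previously placed bumps is the opposite of what you assume: they are a potential \emph{obstruction} whose influence must be shown not to destroy the self-generated deformation (Lemmas \ref{small.reisz.lem} and \ref{lld.cpt}, via the Taylor expansion of $\na^\perp\De^{-1}T_\ga\om_f$ around the moving hyperbolic point $(a(t),0)$ and the smallness of $\cR_{ii}T_\ga\Om$). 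Relatedly, your single-scale bump pairs $\de_n\ph(\la_n(\cdot-p_n))-\de_n\ph(\la_n(\cdot-\bar p_n))$ cannot do the job even for self-generation: for a one-scale bump the quantity $G$ and the $\dot H^1$ norm are comparable, so small $\dot H^1$ forces small deformation. The paper's local data $g_A$ are lacunary sums over $\sim\ln A$ dyadic scales with coefficients tuned so that $G_A\sim\sqrt{\ln A}/\ln\ln A\to\infty$ while $\norm{\na g_A}_2\to 0$; this multi-scale structure is indispensable and is precisely where the threshold $\ga\le\frac12$ enters (through the convergence/divergence of $\sum_j j^{-2\ga}$ over the chosen range).

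A second, unaddressed difficulty is the one the paper identifies as the crux of the compact case: because the supports accumulate, the nonlocal velocities generated by the (high-frequency) local data interact strongly near the accumulation point, and one must prove that the previously chosen data do not cancel the norm inflation of the current one. Disjointness of supports at $t=0$ does not give this; the paper needs the quantitative Riesz-transform bound of Lemma \ref{small.reisz.lem} along the deformed flow, the change of variables to the moving center $(a(t),0)$, the finite-speed-of-propagation control of Lemma \ref{supp.sol.lemma}, and the inductive choice of the thresholds $\del_n,\td\del_n$ in Proposition \ref{prop.cpt} so that the tail $\sum_{j>n}\zeta_j$ is admissible as the perturbation $h$. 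Your proposal acknowledges a ``main obstacle'' at $\ga=\frac12$ but locates it in the wrong place; as written, Steps 2--3 would not close.
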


\begin{remark} The perturbed solution in Theorem \ref{thm.cpt} leaves the critical space in the sense of \eqref{infty.critical} and Remark \ref{infty.critical.d}. By its construction, it has a local regularity enough to be well-defined in $L^\infty_t \dot{H}^1(Q_n)$ for each $n\in \N$. 
\end{remark}

The proof follows the outline of the strong ill-posedness scheme for the 2D Euler equations, developed in \cite{BL15}. 
It consists of three steps: creation of large Lagrangian deformation, local inflation of the critical norm, and patching argument. The first two steps are for the local construction of the perturbation $\zeta$. We first construct a family of initial data whose corresponding deformation matrix $D\phi(\cdot,t)$ get larger in $L^\infty$ space at shorter time $t$. Then, we upgrade each initial data so that the corresponding solution has larger critical norm in shorter time. In the last step, we sequentially patch the initial data in the family in a way of minimizing the interaction between them. This makes the solution for the patched initial data, called the global solution, locally behaves like the local solutions and hence have the critical norm inflation property. 

Difficulties first arise in the local construction of the perturbation. The velocity $u=\na^{\perp}\De^{-1}T_{\ga}\om$ in \eqref{main.eq} is more regular than the one in the Euler but the critical space remains same. This makes it more difficult for local solutions to be inflated in the critical norm. Furthermore, one of the main ingredients of getting the larger Lagrangian deformation is missing--- an explicit forms of the kernels of $D\na^\perp \De^{-1}T_\ga$. To solve these issues, we find essentially sharp pointwise lower bounds of the kernel. What's more, we construct local initial data having increasingly higher frequencies. Along these lines, the desired local construction can be achieved. Then, the successful construction of \textit{non-compactly supported} perturbation follows as in \cite{BL15}, placing local solutions far from each other. However, for a \textit{compactly supported} perturbation, the genuine difficulty moves to the patching process of local solutions. The increasingly higher frequencies of local initial data are likely to intensify interaction between local solutions. Moreover, in order to have a compact support, the local solutions must be placed at an infinitesimal distance from each other eventually. This enhances the interaction further. In a worse case, the active interaction can make high frequencies of local solutions canceled out, so the norm inflation of local solutions can be destroyed after patching. On the other hand, increasingly higher frequencies of local solutions most likely help to create the norm inflation. In order to see what really happens, a sharp control of the propagation of the current local initial data is required under the presence of the previously chosen ones. This can be done based on a keen analysis of the non-local operators. As a result, it can be shown that the existing local solution does not destroy the norm inflation of the current local solution in a very short time. This approach is different from the one in \cite{BL15} based on the perturbation argument, and makes the behaviour of the solution more clear.

The outline of the paper is as follows. Based on the creation of large Lagrangian deformation (Section \ref{sec.lld}), local critical norm inflation (Section \ref{sec.norm.infl}), and patching argument (Section \ref{sec.patching}), we get the proof of Theorem \ref{thm.noncpt} in Section \ref{sec.non.comp}. Then, the compact case (Theorem \ref{thm.cpt}) follows in Section \ref{sec.comp}.

\section{Notations}\label{sec.not}
\begin{itemize}

\item 
For a point $x\in \R^2$ and a positive real number $R$, $B(x,R)$ is the Euclidean ball defined by
\[
B(x,R) = \{y\in \R^2: |x-y|<R\}.
\] 
For a set $A\subset \R^2$ and a positive real number $R$, a generalized ball $B(A,R)$ means 
\[
B(A,R) = \{y\in \R^2: |x-y|<R \text{ for some } x\in A\}. 
\] 
Obviously, when $A$ is a single point set, $A=\{x\}$, we have $B(A,R)=B(x,R)$.

\item For given two sets $A$ and $B$ in $\R^2  $, the distance between two sets is denoted by
\[
\dist(A,B) :=\inf\{|x-y|: x\in A \text{ and } y\in B\}. 
\]

\item For any function $f$ on $\R^2$, we denote the Fourier transform of $f$ by
\[
\hat{f}(k) = \int_{\R^2} f(x)e^{-ik\cdot x} dx, \quad k\in \R^2,
\] 
and its inverse Fourier transform by
\[
\check{f}(x) =\frac 1{(2\pi)^2} \int_{\R^2} \hat{f}(k)e^{ik\cdot x} dk.
\]
\item For any $1\leq p\leq \infty$, $\norm{\cdot}_{L^p(\R^2)}$ is the usual Lebesgue norm in $\R^2$ with its abbreviation $\norm{\cdot}_p$. For any $m\in \N$ and $1\leq p\leq \infty$, $\norm{\cdot}_{W^{m,p}(\R^2)}$ denotes the usual Sobolev norm in $\R^2$. In the case of $p=2$, we use $H^m(\R^2)= W^{m,2}(\R^2)$. The homogeneous Sobolev norm is defined by
\[
\norm{f}_{\dot{H}^s(\R^2)} = \left(\int_{\R^2}|k|^{2s}|\hat{f}(k)|^2 dk\right)^{\frac 12}, \quad\forall s\in \R,
\] 
which includes the definition of $\dot{H}^{-1}(\R^2)$-norm. We omit $(\R^2)$ in the expression of Sobolev norms, when the domain of a function is obvious.

\item  Given two comparable quantities $X$ and $Y$, the inequality $X\lesssim Y$ stands for $X\leq C Y$ for some positive constant $C$. In a similar way, $X\gtrsim Y$ denotes $X\geq C Y$ for some $C>0$. We write $ X \sim Y$ when both $X\lesssim Y$ and $Y\lesssim X$ hold. When the constants $C$ in the inequalities depend on some quantities $Z_1$, $\cdots$, $Z_n$, we use $\lesssim_{Z_1,\cdots,Z_n}$, $\gtrsim_{Z_1,\cdots,Z_n}$, and $\sim_{Z_1,\cdots,Z_n}$. On the other hand, we say $X \ll Y$ if $X\le \ep Y$ for some sufficiently small $\ep>0$. Similarly, $X\gg Y$ is defined.

\end{itemize}

Since we prove the strong ill-posedness of \eqref{main.eq} for each $0<\ga\le \frac 12$, we omit the dependence of $\ga$ below if it is not needed. Also, without mentioning, we assume $0<\ga\le \frac 12$.

\section{Large Lagrangian deformation}\label{sec.lld}

In this section, we find a family of initial data which has large Lagrangian deformation property. As we mentioned, one of the main ingredients is finding a sharp pointwise estimate of the kernel of the operator $-\pa_{12}\Del^{-1}T_{\ga}$ from below. We consider the case $T_{\ga}(|\na|)=\ln^{-\ga}(e-\Del)$ first.

\begin{lemma}\label{estimate.K}
	Let $\ga>0$ and $K_{12}$ be the kernel of the Fourier multiplier $-\pa_{12} \Del^{-1} \ln^{-\ga}(e-\Del)$. Then, for any $x = (x_1 ,x_2)\in \R^2$, $x_1 > 0$, $x_2 > 0$, we have
	\EQ{\label{estimate.K.ineq}
	K_{12}(x_1,x_2)\geq  \frac {C x_1x_2}{|x|^4}
	\ln^{-\ga}\left(e+\frac 1{|x|}\right)e^{-|x|^2}
}
	for some positive constant $C$ depending only on $\ga$.
\end{lemma}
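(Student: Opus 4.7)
The plan is to pass to Fourier space and reduce $K_{12}(x)$ to a one-dimensional radial integral. In polar coordinates $k = r(\cos\theta,\sin\theta)$ and $x = |x|(\cos\phi,\sin\phi)$, the symbol $-k_1k_2/|k|^2 = -\tfrac12\sin(2\theta)$ together with the shift $\theta = \psi+\phi$ isolates the $\sin(2\phi)\cos(2\psi)$ component, and the standard identity $\int_0^{2\pi}\cos(2\psi)\,e^{iz\cos\psi}\,d\psi = -2\pi J_2(z)$ collapses the angular integration to produce
\[
K_{12}(x) = \frac{x_1 x_2}{2\pi|x|^2}\int_0^\infty r\,J_2(r|x|)\,\ln^{-\gamma}(e+r^2)\,dr.
\]
The prefactor $\sin(2\phi) = 2x_1x_2/|x|^2$ already delivers the desired $x_1x_2/|x|^4$ spatial structure; only a lower bound on the radial integral by $|x|^{-2}\ln^{-\gamma}(e+1/|x|)\,e^{-|x|^2}$ remains.

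To sidestep the oscillation of $J_2$ and obtain a manifestly positive representation, I apply the subordination $\ln^{-\gamma}(e+r^2) = \Gamma(\gamma)^{-1}\int_0^\infty s^{\gamma-1}(e+r^2)^{-s}\,ds$ together with the heat-kernel identity $(e+r^2)^{-s} = \Gamma(s)^{-1}\int_0^\infty t^{s-1}e^{-(e+r^2)t}\,dt$. After Fubini, the inner Hankel integral is closed-form: using the recursion $J_2(z) = 2z^{-1}J_1(z) - J_0(z)$ and Weber's identity $\int_0^\infty rJ_0(ar)e^{-pr^2}\,dr = (2p)^{-1}e^{-a^2/4p}$, one computes
\[
\int_0^\infty r\,J_2(r|x|)\,e^{-tr^2}\,dr \;=\; \frac{2}{|x|^2}\,g\!\left(\tfrac{|x|^2}{4t}\right), \qquad g(y) := 1-(1+y)e^{-y} > 0.
\]
Hence $K_{12}(x) = (x_1x_2/\pi|x|^4)\,I$, where $I$ is the manifestly positive double integral
\[
I \;=\; \frac{1}{\Gamma(\gamma)}\int_0^\infty \frac{s^{\gamma-1}}{\Gamma(s)}\int_0^\infty t^{s-1}e^{-et}\,g\!\left(\tfrac{|x|^2}{4t}\right)\,dt\,ds.
\]

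To extract the quantitative lower bound, I restrict the $(s,t)$-integration in $I$ to a suitable region depending on $|x|$, using the elementary inequalities $g(y)\geq 1/2$ for $y\geq c_1$ and $g(y)\geq c_2 y^2$ for small $y$. For $|x|\leq 1$, I restrict $t\in(0,\,|x|^2/(4c_1)]$ (so that $g(|x|^2/(4t))\geq 1/2$) and $s\in(0,\,1/\log(1/|x|)]$; on this region $e^{-et}$, $e^{-s}$, and $|x|^{2s}$ are all bounded below by positive absolute constants, so that the inner $t$-integral contributes at least a constant multiple of $|x|^{2s}/\Gamma(s+1)$, and integrating in $s$ gives
\[
I \;\gtrsim\; \int_0^{1/\log(1/|x|)}\frac{s^{\gamma-1}}{\Gamma(s+1)}\,|x|^{2s}\,ds \;\gtrsim\; \ln^{-\gamma}(1/|x|) \;\sim\; \ln^{-\gamma}(e+1/|x|).
\]
For $|x|\geq 1$, I restrict instead to $s,t \sim 1$, where $g(|x|^2/(4t))\geq 1/2$ uniformly, yielding $I\gtrsim 1$; since $e^{-|x|^2}\leq 1$ here, this absorbs the desired Gaussian weight and the $\ln^{-\gamma}(e+1/|x|)$ factor (both $\leq 1$) trivially.

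The main technical obstacle is the small-scale regime $|x|\leq 1$, where one must carefully control the Gamma-function weights near $s=0$ and exploit the slowly-varying $|x|^{2s}$ factor to extract the correct $\ln^{-\gamma}$ dependence with explicit constants; the heat-kernel subordination is essential for converting the originally oscillatory Bessel integral into this positive, manageable form.
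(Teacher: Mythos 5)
Your proof is correct and is essentially the paper's argument: after the change of variables $t=|x|^2\tilde\beta$, your positive double integral $I$ is literally the representation the paper derives (the paper reaches it by writing out $\partial_{12}e^{(s+\beta)\Delta}\delta_0$ explicitly rather than via Hankel transforms, and your $g(y)$ is its $16\bigl(1-e^{-1/(4\tilde\beta)}-\tfrac{1}{4\tilde\beta}e^{-1/(4\tilde\beta)}\bigr)$), and the final step of restricting the $(s,t)$-region to extract $\int_0^{1/\log(1/|x|)}s^{\gamma-1}\,ds\sim\ln^{-\gamma}$ matches the paper's as well. The one point to tidy is that $\int_0^\infty rJ_2(r|x|)\ln^{-\gamma}(e+r^2)\,dr$ is only conditionally convergent, so the polar-coordinate reduction and the subsequent Fubini step need a brief justification (e.g., insert a regularizer $e^{-\epsilon r^2}$ and pass to the limit, or perform the subordination before the angular integration, as the paper does, so that every interchange is over an absolutely convergent integral).
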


\begin{proof}
	Using the equalities
	\EQN{
	\int_0^{\infty}e^{-|k|^2s}|k|^2 ds &=1, \quad\text{ for }k\neq 0,\\
	\frac 1{\Ga(\ga)}\int_0^{\infty} e^{-at} t^{\ga} \frac{dt}t &= a^{-\ga}, \quad\text{ for }a> 0, 
	}
	the Fourier transform of $K_{12}$ can be written as 
	\EQ{\label{pre.int.form}
	\widehat{K_{12}}(k)
	&=-\frac{k_1k_2}{|k|^2}\ln^{-\ga}(e+|k|^2)
	=\int_0^{\infty} e^{-|k|^2 s} (-k_1k_2)\ln^{-\ga}(e+|k|^2) ds\\
	&=\int_0^{\infty}\frac 1{\Ga(\ga)}\int_0^{\infty} (e+|k|^2)^{-t} e^{-|k|^2 s} (-k_1k_2) t^{\ga} \frac{dt}{t} ds\\
	&=\frac 1{\Ga(\ga)}
	\int_0^{\infty}\frac 1{\Ga(t)}
	\int_0^{\infty}e^{-e\be}\int_0^{\infty}(-k_1k_2)e^{-|k|^2(\be+s)} ds \be^t \frac{d\be}{\be}t^{\ga} \frac {dt}t, \qquad \forall k\neq 0.
	}
	
	Taking the inverse Fourier transform, the kernel $K_{12}(x)$, for any $x\neq 0$, can be expressed as an integral form:
	\EQN{
		K_{12}(x) 
		&=
		\frac 1{\Ga(\ga)} \int_0^{\infty} \frac 1{\Ga(t)}\int_0^{\infty} e^{-e\be}\left( \int_0^{\infty} \pa_{12} (e^{(s+\be)\Del }\del_0)(x) ds\right)
		\be^t \frac{d\be}{\be}
		t^{\ga}\frac{dt}t \\
		&\sim_{\ga} x_1 x_2
		\int_0^{\infty}  \frac 1{\Ga(t)}\int_0^{\infty} e^{-e\be}\left( \int_0^{\infty} \frac 1{(s+\be)^3} e^{-\frac{|x|^2}{4(s+\be)}}ds \right)
		\be^t \frac{d\be}{\be}
		t^{\ga}\frac{dt}t \\
		&=\frac{x_1 x_2}{|x|^4}
		\int_0^{\infty} \frac {|x|^{2t}}{\Ga(t)} \int_0^{\infty} e^{-e|x|^2\td\be}
		\left(\int_0^{\infty}\frac 1{(\td s+\td\be)^3} e^{-\frac{1}{4(\td s+\td\be)}}d\td s\right)
		\td\be^t \frac{d\td\be}{\td\be}
		t^{\ga}\frac{dt}t,
	}
where $e^{t\De}\del_0$ is the usual heat kernel. 
The last equality easily follows from the change of variables $\be = |x|^2 \td \be$ and $ s= |x|^2 \td s$. 
	
	Then, the integral in $\td s$ can be computed as
	\EQ{\label{simplify.K}
		\int_0^{\infty}\frac 1{(\td s+\td\be)^3} e^{-\frac{1}{4(\td s+\td\be)}}d\td s
		&=\int_{\td \be}^{\infty}\frac 1{\tau^3} e^{-\frac{1}{4\tau}}d\tau
		=\int_{\td \be}^{\infty}\frac 1{\tau} (4 e^{-\frac{1}{4\tau}})'d\tau\\
		&=\frac 4{\tau} e^{-\frac{1}{4\tau}}\bigg|^{\infty}_{\td\be}
		+\int_{\td \be}^{\infty}\frac 4{\tau^2} e^{-\frac{1}{4\tau}}d\tau\\
		&=16\left(1-e^{-\frac{1}{4\td\be}}-\frac 1{4\td\be} e^{-\frac{1}{4\td\be}}\right),
	}
	so that we simplify the integral form as  
	\[
	K_{12}(x)\sim_{\ga}
	\frac{x_1 x_2}{|x|^4}
	\int_0^{\infty} \frac {|x|^{2t}}{\Ga(t)} \int_0^{\infty} e^{-e|x|^2\td\be}
	\left(1-e^{-\frac{1}{4\td\be}}-\frac 1{4\td\be} e^{-\frac{1}{4\td\be}}\right)
	\td\be^t \frac{d\td\be}{\td\be}
	t^{\ga}\frac{dt}t,
	\quad\forall x\neq 0.
	\]
	 
	Now, for each $x=(x_1,x_2)$ with $x_1>0$ and $x_2>0$, we find the lower bound of the kernel. Indeed, the desired lower bound \eqref{estimate.K.ineq} follows from 
	\EQN{
		\int_0^{\infty} &\frac {|x|^{2t}}{\Ga(t)} \int_0^{\infty} e^{-e|x|^2\td\be}
		\left(1-e^{-\frac{1}{4\td\be}}-\frac 1{4\td\be} e^{-\frac{1}{4\td\be}}\right)
		\td\be^t \frac{d\td\be}{\td\be}
		t^{\ga}\frac{dt}t\\
		&\gtrsim e^{-|x|^2}\int_0^1 \frac{|x|^{2t}}{\Ga(t)}\int_0^{\frac 1{e}}
		\td\be^{t} \frac{d\td\be}{\td\be}
		t^{\ga}\frac{dt}t\\
		&\gtrsim e^{-|x|^2}
		\int_0^1 \frac{|x|^{2t}}{t\Ga(t)} t^{\ga}\frac{dt}t
		\gtrsim e^{-|x|^2}
		\int_0^1 |x|^{2t} 	t^{\ga}\frac{dt}t \\
		&\gtrsim_{\ga} \ln^{-\ga}\left(e+\frac 1{|x|}\right)e^{-|x|^2}.
	}

\end{proof}

Now, we consider the case of $T_{\ga}(|\na|)=\ln^{-\ga}(e+|\na|)$. To express the corresponding kernel as an integral form, we need the following identity.

\begin{lemma}\label{subo}(Subordination identity) For any $r\geq 0$, we have
	\[
	e^{-r} = \frac{1}{\sqrt{\pi}} \int_0^{\infty} e^{-\tau} e^{-\frac{r^2}{4\tau}} \tau^{-\frac 12} d\tau.
	\]
\end{lemma}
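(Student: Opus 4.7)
The plan is to verify the identity by a direct computation based on a Gaussian substitution, since both sides are smooth functions of $r\ge 0$ and the right-hand side is an absolutely convergent integral. Denote the right-hand side by
\[
I(r) = \frac{1}{\sqrt{\pi}} \int_0^{\infty} e^{-\tau} e^{-\frac{r^2}{4\tau}} \tau^{-\frac 12}\, d\tau.
\]
The case $r=0$ is immediate from $\int_0^\infty e^{-\tau}\tau^{-1/2}d\tau = \Gamma(1/2) = \sqrt{\pi}$, so I focus on $r>0$.

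First I would substitute $\tau = s^2$, $d\tau = 2s\,ds$, $\tau^{-1/2} = s^{-1}$, to put the integral into the symmetric form
\[
I(r) = \frac{2}{\sqrt{\pi}} \int_0^{\infty} e^{-s^2 - \frac{r^2}{4s^2}}\, ds =: \frac{2}{\sqrt{\pi}} J(r).
\]
The key trick is the Cauchy--Schl\"omilch-type substitution $s \mapsto r/(2s)$ in $J(r)$, which gives the alternative representation
\[
J(r) = \int_0^{\infty} e^{-s^2 - \frac{r^2}{4s^2}}\, \frac{r}{2 s^2}\, ds,
\]
and averaging the two expressions produces
\[
2 J(r) = \int_0^{\infty} e^{-s^2 - \frac{r^2}{4s^2}} \left( 1 + \frac{r}{2 s^2}\right) ds.
\]

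Now I would complete the square via the new variable $u = s - r/(2s)$, noting $du = (1+r/(2s^2))\,ds$ and the algebraic identity $s^2 + r^2/(4s^2) = u^2 + r$. As $s$ ranges over $(0,\infty)$, $u$ ranges monotonically over $(-\infty,\infty)$, so
\[
2 J(r) = e^{-r} \int_{-\infty}^{\infty} e^{-u^2}\, du = \sqrt{\pi}\, e^{-r},
\]
which yields $I(r) = \frac{2}{\sqrt\pi}\cdot\frac{\sqrt\pi}{2} e^{-r} = e^{-r}$, as claimed. There is no real obstacle here beyond spotting the correct reciprocal substitution that symmetrizes the integrand; once the change of variables $u=s-r/(2s)$ is available, the remainder is just a Gaussian integral.
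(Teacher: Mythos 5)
Your proof is correct, but it follows a genuinely different route from the paper. The paper proves the identity by writing $e^{-r}=\frac 1\pi\int_{-\infty}^{\infty}\frac{e^{i\theta r}}{1+\theta^2}\,d\theta$ (the Fourier transform of the Cauchy kernel), expanding $\frac 1{1+\theta^2}=\int_0^\infty e^{-\tau}e^{-\tau\theta^2}\,d\tau$, and interchanging the integrals, so that the inner $\theta$-integral becomes a Gaussian; this is the classical derivation that exhibits the Poisson semigroup as subordinate to the heat semigroup and explains the lemma's name. You instead stay entirely on the real line: after $\tau=s^2$ you symmetrize the integrand with the reciprocal substitution $s\mapsto r/(2s)$ and complete the square via $u=s-r/(2s)$, reducing everything to $\int_{-\infty}^{\infty}e^{-u^2}\,du$. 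All of your steps check out (the monotonicity of $u(s)$, the identity $s^2+\tfrac{r^2}{4s^2}=u^2+r$, and the $r=0$ case via $\Gamma(1/2)$). What your approach buys is the avoidance of any Fourier inversion or Fubini justification, making the argument self-contained and elementary; what the paper's approach buys is conceptual transparency and immediate generalization to the operator identity $e^{-t|\nabla|}$ that is actually used in Lemma \ref{estimate.tdK}. Either proof is acceptable here.
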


\begin{proof}
	By using Fourier transform, it is easy to see 
	\[
	e^{-r} = \frac 1{\pi}\int_{-\infty}^{\infty}\frac 1{1+\th^2}e^{i\th r} d\th, \quad\forall r\geq 0.
	\]
	Since we can write
	\[
	\frac 1{1+\th^2} = \int_0^{\infty}e^{-\tau}e^{-\tau \th^2 } d\tau,
	\]
	the result follows from interchanging the $d\th-d\tau$ integral. 
\end{proof}

\begin{lemma}\label{estimate.tdK}
 Let $\ga>0$ and $\td K_{12}$ be the kernel of the multiplier $-\pa_{12} \Del^{-1} \ln^{-\ga}(e+|\na|)$. Then, for any $x = (x_1 ,x_2)\in \R^2$, $x_1 > 0$, $x_2 > 0$, we have
	\EQ{\label{estimate.tdK.ineq}
	\td K_{12}(x_1,x_2)\geq  \frac {Cx_1x_2}{|x|^4}
	\ln^{-\ga}\left(e+\frac 1{|x|}\right)e^{-|x|^2}
	}
	for some positive constant $C$ depending only on $\ga>0$.
\end{lemma}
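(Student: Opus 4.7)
The plan is to follow the proof of Lemma \ref{estimate.K}, invoking Lemma \ref{subo} to convert $e^{-\be|k|}$ (which arises now from $(e+|k|)^{-t}$, in place of $e^{-\be|k|^2}$ from $(e+|k|^2)^{-t}$) back into a mixture of Gaussians in $|k|^2$, so that the Fourier-inversion machinery of the previous proof applies. The iterated identity $a^{-\nu} = \Ga(\nu)^{-1}\int_0^\infty e^{-a\tau}\tau^\nu d\tau/\tau$, used first for $a=\ln(e+|k|)$ and then for $a=e+|k|$, together with $1/|k|^2 = \int_0^\infty e^{-|k|^2 s}ds$, gives the analogue of \eqref{pre.int.form}:
\EQN{
\widehat{\td K_{12}}(k) = \frac{1}{\Ga(\ga)}\int_0^\infty \frac{t^\ga dt}{t\Ga(t)} \int_0^\infty e^{-e\be}e^{-\be|k|}\be^t \frac{d\be}{\be}\int_0^\infty (-k_1 k_2)e^{-|k|^2 s} ds.
}
Lemma \ref{subo} substituted for $e^{-\be|k|}$ introduces an extra $\si$-integral and a factor $\be/\sqrt\pi$, and fuses the Gaussian $e^{-|k|^2/(4\si)}$ with $e^{-|k|^2 s}$ into a single $e^{-|k|^2(s+1/(4\si))}$.

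Taking the inverse Fourier transform, $(-k_1 k_2)e^{-|k|^2(s+1/(4\si))}$ becomes the heat-kernel derivative $\pa_{12}(e^{(s+1/(4\si))\De}\de_0)(x)$, and the inner $s$-integral is evaluated by the identical substitution producing \eqref{simplify.K} (with lower limit $1/(4\si)$ in place of $\td\be$), yielding a factor $|x|^{-4}[1 - (1+|x|^2\si)e^{-|x|^2\si}]$. Rescaling $\be = |x|\td\be$ and $\si = \td\si/|x|^2$ -- the natural dimensional choice since $\be|k| \sim 1$ and $|x|^2\si \sim 1$ -- the representation becomes
\EQN{
\td K_{12}(x) \sim_\ga \frac{x_1 x_2}{|x|^4}\int_0^\infty \frac{|x|^t t^\ga}{\Ga(t)}\frac{dt}{t}\int_0^\infty e^{-e|x|\td\be}\td\be^t A(\td\be)\,d\td\be,
}
where $A(\td\be) := \int_0^\infty \si^{-1/2}e^{-\td\be^2\si}[1 - (1+\si)e^{-\si}]\,d\si$.

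Every factor is nonnegative for $x_1, x_2 > 0$, so the lower bound follows from shrinking the domain of integration. The key observation is $A(\td\be) \gec 1/\td\be$ as $\td\be \to 0$: the Gaussian $e^{-\td\be^2\si}$ truncates the $\si^{-1/2}$-divergence at the scale $\si \sim 1/\td\be^2$, so that $A(\td\be) \gec \int_1^{1/\td\be^2}\si^{-1/2}d\si \sim 1/\td\be$. For $|x|\leq 1$, I restrict $\td\be \in (0, 1/e)$ so that $e^{-e|x|\td\be}\geq e^{-1}$, obtaining $\int_0^{1/e}\td\be^t A(\td\be)d\td\be \gec \int_0^{1/e}\td\be^{t-1}d\td\be \sim 1/t$; then $|x|^t/(t\Ga(t)) = |x|^t/\Ga(t+1) \sim |x|^t$ on $(0,1)$, and the final integration $\int_0^1 t^{\ga-1}|x|^t\,dt \gec_\ga \ln^{-\ga}(e+1/|x|)$ (the Laplace-type estimate used at the end of Lemma \ref{estimate.K}'s proof) closes the case, with the factor $e^{-|x|^2}\geq e^{-1}$ free. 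For $|x|\geq 1$, I restrict instead to $\td\be \in (0, 1/(e|x|))$, again ensuring $e^{-e|x|\td\be}\geq e^{-1}$; the estimate $\int_0^{1/(e|x|)}\td\be^t A(\td\be)d\td\be \gec (|x|^t t)^{-1}$ cancels the $|x|^t$ prefactor and leaves a positive $\ga$-dependent constant, which trivially dominates the exponentially small target $\ln^{-\ga}(e+1/|x|)e^{-|x|^2}$.

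The main obstacle I anticipate is the bookkeeping after applying Lemma \ref{subo}: subordination introduces an extra factor of $\be$ that converts the clean $\be^{t-1}d\be$ measure of Lemma \ref{estimate.K}'s proof into $\be^t d\be$, and at face value this loses a factor of $\ln^{-1}(1/|x|)$ in the final estimate. The missing $1/\td\be$ is recovered precisely from $A(\td\be)\sim 1/\td\be$, a divergence encoding the fact that $e^{-\be|k|}$ corresponds to a spatial scale $\sim\be$ that is wider than the scale $\sim\sqrt\be$ of $e^{-\be|k|^2}$. Once this compensation is identified, the remainder of the argument follows the template of Lemma \ref{estimate.K} verbatim.
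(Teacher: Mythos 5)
Your proposal is correct and is essentially the paper's own argument: the same iterated Gamma-function identities, the same subordination step, and the same rescaling lead to an integral representation that coincides with the paper's after the substitution $\tau=\td\be^2\si$ (your $\td\be^t A(\td\be)\,d\td\be$ is exactly the paper's $\td\be^t\frac{d\td\be}{\td\be}\int_0^\infty e^{-\tau}(1-e^{-\tau/\td\be^2}-\tfrac{\tau}{\td\be^2}e^{-\tau/\td\be^2})\tau^{-1/2}d\tau$), and your observation that $A(\td\be)\gtrsim 1/\td\be$ recovers precisely the factor the paper extracts by integrating $\td\be$ over $(0,\sqrt{\tau/e})$ first. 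The only cosmetic difference is that you split the cases $|x|\le 1$ and $|x|\ge 1$, while the paper carries a uniform factor $e^{-|x|/e}\gtrsim e^{-|x|^2}$; both closings are equivalent.
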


\begin{proof}
	
	As we did in Lemma \ref{estimate.K}, the Fourier transform of $\td K_{12}$ can be expressed as follows:
	\EQ{\label{pre.int.form2}
	\widehat{{\td K}_{12}}(k) 
	&=-\frac {k_1k_2}{|k|^2}\ln^{-\ga}(e+|k|)\\
	&= \frac 1{\Ga(\ga)} \int_0^{\infty}\frac 1{\Ga(t)} \int_0^{\infty} e^{-e\be} \int_0^{\infty}(-k_1k_2)e^{-k|\be|}e^{-|k|^2s} ds \be^t \frac{d\be}{\be} t^{\ga}\frac{dt}t, \quad\forall k\neq 0.
	}
	Using the identity in Lemma \ref{subo}, for $\be \geq 0$ we have
	\EQ{\label{suboo}
	e^{-|k|\be} = \frac 1{\sqrt{\pi}} \int_0^{\infty} e^{-\tau}e^{-\frac {|k|^2\be^2}{4\tau}}\tau^{-\frac 12}d\tau,
	}
	so that the kernel can be written as an integral form: for any $x\neq 0$,
	\EQN{
		\td K_{12}(x)
		&=
		\frac 1{\sqrt{\pi}\Ga(\ga)} \int_0^{\infty} \frac 1{\Ga(t)}\int_0^{\infty} e^{-e\be} 
		\int_0^{\infty} 
		\int_0^{\infty} e^{-\tau} (\pa_{12} e^{\left(\frac{\be^2}{4\tau}+s\right)\Del}\del_0)(x) \tau^{-\frac 12} d\tau ds
		\be^t \frac{d\be}{\be}
		t^{\ga}\frac{dt}t\\
		&\sim_{\ga} x_1x_2
		 \int_0^{\infty} \frac 1{\Ga(t)}
		 \int_0^{\infty} e^{-e\be}
		  \int_0^{\infty} e^{-\tau} 
		\int_0^{\infty}  
		 \frac 1{\left(\frac{\be^2}{4\tau}+s\right)^3}e^{-\frac{|x|^2}{4\left(\frac{\be^2}{4\tau}+s\right)}}
		  ds \tau^{-\frac 12}d\tau
		\be^t \frac{d\be}{\be}
		t^{\ga}\frac{dt}t\\
		&= \frac{x_1x_2}{|x|^4}
		\int_0^{\infty} \frac {|x|^t}{\Ga(t)}
		\int_0^{\infty} e^{-e|x|\td\be}
		\int_0^{\infty} e^{-\tau} 
		\int_0^{\infty}  
		\frac 1{\left(\frac{\td\be^2}{4\tau}+\td s\right)^3}
		e^{-\frac{1}{4\left(\frac{\td\be^2}{4\tau}+\td s\right)}} 
		 d\td s \tau^{-\frac 12}d\tau
		\td\be^t \frac{d\td\be}{\td\be}
		t^{\ga}\frac{dt}t.
	}
	In the last equality, we do the change of variables $\be = |x|\td\be$ and $ s= |x|^2 \td s$.
	
	The integral in $\td s$ can be simplified as
	\EQ{\label{simplify.tdK}
		\int_0^{\infty}  
		\frac 1{\left(\frac{\td\be^2}{4\tau}+\td s\right)^3}
		e^{-\frac{1}{4\left(\frac{\td\be^2}{4\tau}+\td s\right)}} 
		d\td s	
		&=16
		\left(1-e^{-\frac{\tau}{\td\be^2}}-\frac{\tau}{\td\be^2} e^{-\frac{\tau}{\td\be^2}}\right),
	}
	and the integral form also becomes simple, 
	\[
	\td K_{12}(x)
	\sim_{\ga}
	\frac{x_1x_2}{|x|^4}
	\int_0^{\infty} \frac {|x|^t}{\Ga(t)}
	\int_0^{\infty} e^{-e|x|\td\be}
	\left( \int_0^{\infty} e^{-\tau} 
	(1-e^{-\frac{\tau}{\td\be^2}}-\frac{\tau}{\td\be^2} e^{-\frac{\tau}{\td\be^2}}) \tau^{-\frac 12}d\tau\right)
	\td\be^t \frac{d\td\be}{\td\be}
	t^{\ga}\frac{dt}t.
	\]
	
	To get a lower bound, we first consider the integral in $\tau$ and $\td\be$:
	\EQN{
	\int_0^{\infty} e^{-e|x|\td\be}
	&\left( \int_0^{\infty} e^{-\tau} 
	(1-e^{-\frac{\tau}{\td\be^2}}-\frac{\tau}{\td\be^2} e^{-\frac{\tau}{\td\be^2}}) \tau^{-\frac 12}d\tau\right)
	\td\be^t \frac{d\td\be}{\td\be}\\
	&\gtrsim
	\int_0^{\infty}e^{-\tau}e^{-\sqrt{e\tau}|x|}
	\left(\int_0^{\sqrt{\frac{\tau}{e}}}  \td\be^t \frac{d\td\be}{\td\be}\right)
	\tau^{-\frac 12}d\tau\\
	&\geq \frac 1{t\sqrt{e}^t}e^{-\frac{|x|}{e}} \int_0^{\frac 1{e^3}}e^{-\tau}
	{\tau}^{\frac{t-1}2}d\tau 
	\gtrsim \frac 1{t(t+1)e^{2t}}e^{-\frac{|x|}{e}}, \quad\forall x\neq 0, t>0.
	}

	Then, for each $x=(x_1,x_2)\in\R^2$ with $x_1>0$ and $x_2>0$, the desired lower bound \eqref{estimate.tdK.ineq} of the kernel follows from 
	\EQN{
		\int_0^{\infty} \frac {|x|^{t}}{\Ga(t)} 
		\int_0^{\infty} &e^{-e|x|\td\be}
		\left( \int_0^{\infty} e^{-\tau} 
		(1-e^{-\frac{\tau}{\td\be^2}}-\frac{\tau}{\td\be^2} e^{-\frac{\tau}{\td\be^2}}) \tau^{-\frac 12}d\tau\right)
		\td\be^t \frac{d\td\be}{\td\be}
		t^{\ga}\frac{dt}t\\
		&\gtrsim e^{-\frac{|x|}{e}}\int_0^{1} \frac {|x|^{t}}{t\Ga(t)}
		\frac 1{(t+1)e^{2t}}
		t^{\ga}\frac{dt}t 
		\gtrsim_\gamma \ln^{-\ga}\left(e+\frac{1}{|x|}\right)e^{-|x|^2}.
	}
	
\end{proof}

\begin{remark} By Lemma \ref{estimate.K} and Lemma \ref{estimate.tdK}, we can see that the kernels of $-\pa_{12}\De^{-1}T_\ga$ for both $T_\ga = \ln^{-\ga}(e-\De)$ and $T_\ga = \ln^{-\ga}(e + |\na|)$ have the same lower bound. Therefore, we use the combined notations $T_\ga$ and its kernel $K$ for both cases from now on. 
\end{remark}
\medskip

Now, we are ready to estimate Lagrangian deformation.

\begin{proposition}\label{large.lagrangian}
Let $\ga>0$. Suppose that a function $g\in C_c^{\infty}(\R^2)$ satisfies the following conditions.
\begin{enumerate}[(i)]
	\item $g$ is odd in $x_1$ and $x_2$.
	\item $g(x_1,x_2)\geq 0$ on $\{x_1\geq 0, x_2\geq 0\}$.
	\item 
	\[
	G\equiv \int_{x_1>0, x_2>0} g(x)\frac{x_1 x_2}{|x|^{4}} \ln^{-\ga}\left(e+\frac 1{|x|}\right) e^{-|x|^4} dx >0.
	\]
\end{enumerate}
Let $\phi$ be the characteristic line defined by
\[
\begin{cases}
	\pa_t \phi(x,t) = \na^{\perp}\Del^{-1}T_{\ga}\om (\phi(x,t), t)\\
	\phi(x,0) = x, 
\end{cases}
\]
where $\om$ is a smooth solution to \eqref{main.eq} for the initial data $\om_0=g$. 
Then, the Lagrangian deformation $D\phi$ satisfies
	\EQ{\label{large.lagrangian1}
	\int_0^t e^{-\norm{D\phi(\cdot,\tau)}_{\infty}^4} d\tau
	\leq \frac 1{CG} \ln(1+ CGt), \quad \forall t\geq 0
}
for some positive constant $C=C(\ga)$.
	In particular, we have
	\EQ{\label{large.lagrangian2}
	\max_{0\leq \tau\leq t}\norm{D\phi(\cdot,\tau)}_{\infty}
	\geq \ln^{\frac 14}\left(\frac{CGt}{\ln(1+CGt)}\right), 
	\quad \forall t>0. 
	}
\end{proposition}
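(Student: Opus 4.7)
The plan follows the Bourgain--Li scheme for creating large Lagrangian deformation, now adapted to the logarithmically regularized Biot--Savart kernel. First I exploit the double-odd symmetry of $g$, which is preserved by the flow: $\om(\cdot, s)$ remains odd in $x_1$ and $x_2$, the coordinate axes and the first quadrant are invariant under $\phi(\cdot, s)$, and $\phi(0, s) = 0$. Using the parities of $u_1, u_2$ together with incompressibility $\pa_1 u_1 + \pa_2 u_2 = 0$, one checks that $Du(0, 0, s) = \pa_1 u_1(0, 0, s)\,\mathrm{diag}(1, -1)$, so the Lagrangian ODE $\pa_s D\phi(0, s) = Du(0, s)\, D\phi(0, s)$ integrates to
\[
D\phi(0, s) = \mathrm{diag}\bigl(e^{B(s)},\, e^{-B(s)}\bigr),
\qquad
B(s) := \int_0^s \pa_1 u_1(0, 0, \tau)\, d\tau.
\]
Hence $M(s) := \|D\phi(\cdot, s)\|_\infty \geq e^{B(s)}$, and the whole problem reduces to a lower bound on $B(s)$.

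For the second step, since $K_{12}$ is the convolution kernel for $\pa_1 u_1$, and since $K_{12}$ is odd in each variable while $\om$ is odd in both, the standard four-reflection argument collapses the convolution $K_{12}\ast\om(0,s)$ to an integral over $\{x_1,x_2>0\}$. An area-preserving change of variable $y = \phi(x, s)$ then yields
\[
\pa_1 u_1(0, 0, s) = 4\int_{x_1, x_2 > 0} K_{12}\bigl(\phi(x, s)\bigr)\, g(x)\, dx.
\]
I plug in the pointwise lower bound from Lemma~\ref{estimate.K}/\ref{estimate.tdK} and estimate each factor using the two-sided deformation bound $|x|/M(s) \leq |\phi(x,s)| \leq M(s)|x|$, which follows from $\det D\phi = 1$ and $\|D\phi\|_\infty = M(s)$. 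The key inequality is arithmetic--geometric mean
\[
M^2 |x|^2 \leq \tfrac{1}{2}\bigl(M^4 + |x|^4\bigr),
\qquad\text{so}\qquad
e^{-|\phi|^2} \geq e^{-M^4/2}\, e^{-|x|^4/2},
\]
which is precisely the mechanism producing the weight $e^{-|x|^4}$ in the definition of $G$. The remaining factors---$|\phi|^{-4}$, $\ln^{-\ga}(e + |\phi|^{-1})$, and $\phi_1\phi_2$---each cost only a polynomial-in-$M$ loss; in particular $\phi_1\phi_2 \gtrsim x_1 x_2/\mathrm{poly}(M)$ is derived from the approximate conservation of $\phi_1\phi_2$ along the flow, a consequence of the fact that to leading order near the origin $\ddt \log(\phi_1\phi_2) = \pa_1 u_1(0) + \pa_2 u_2(0) = 0$ by incompressibility and the odd symmetries. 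All these polynomial losses are absorbed into $e^{-M^4/2}$, and one arrives at the differential inequality
\[
\pa_s B(s) = \pa_1 u_1(0, 0, s) \geq C\, G\, e^{-M(s)^4}.
\]

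The ODE argument then closes the loop. Integrating gives $B(t) \geq CG\, F(t)$, where $F(t) := \int_0^t e^{-M(\tau)^4}\, d\tau$; combining with $M(t) \geq e^{B(t)}$ yields the a priori estimate $F'(t) = e^{-M(t)^4} \leq e^{-e^{4 CG F(t)}}$, whose elementary integration produces a bound on $F$ that easily implies \eqref{large.lagrangian1}. For \eqref{large.lagrangian2}, setting $M_*(t) := \max_{0 \leq \tau \leq t} M(\tau)$, one simply observes $t\, e^{-M_*(t)^4} \leq F(t) \leq \tfrac{1}{CG}\ln(1+CGt)$ and solves for $M_*$. The main obstacle is Step~3: bounding $\phi_1\phi_2$ from below uniformly on the support of $g$, controlling the logarithm $\ln^{-\ga}(e + |\phi|^{-1})$, and verifying that every polynomial-in-$M$ loss accumulated along the way is indeed dominated by $e^{-M^4/2}$, so that the constant $G$ survives on the right-hand side of the final differential inequality.
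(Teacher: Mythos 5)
Your overall architecture coincides with the paper's: the double-odd symmetry forcing $Du(0,t)$ to be diagonal, the reduction of everything to a lower bound on $\la(t)=\pa_1u_1(0,0,t)=-\pa_{12}\De^{-1}T_\ga\om(0,t)$, the four-fold reflection and the measure-preserving change of variables $y=\phi(x,t)$, the absorption of all polynomial and logarithmic losses in $M(t)=\norm{D\phi(\cdot,t)}_\infty$ into $e^{-M^4}$ via $M^2|x|^2\lesssim M^4+|x|^4$ (which is exactly where the weight $e^{-|x|^4}$ in $G$ comes from), and the closing ODE argument for \eqref{large.lagrangian1}--\eqref{large.lagrangian2} are all the same as in the paper and are correct as you describe them.

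There is, however, a genuine gap in the one step you yourself flag as the main obstacle: the lower bound $\phi_1\phi_2\gtrsim x_1x_2/\mathrm{poly}(M)$. Your proposed mechanism --- ``approximate conservation of $\phi_1\phi_2$'' from $\ddt\log(\phi_1\phi_2)=\pa_1u_1(0)+\pa_2u_2(0)=0$ to leading order --- does not deliver a bound in terms of $M$. Made rigorous, that computation gives $\ddt\log\phi_i=\int_0^1\pa_iu_i(\dots)\,d\theta$, each term bounded only by $\norm{\na u(\cdot,t)}_\infty$ (the two terms do not cancel away from the origin), so one obtains $\phi_1\phi_2\geq x_1x_2\exp\bke{-2\int_0^t\norm{\na u}_\infty\,ds}$. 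The quantity $\exp\bke{\int_0^t\norm{\na u}_\infty}$ dominates $\norm{D\phi(\cdot,t)}_\infty$ by Gr\"onwall but is not dominated by it, so the resulting differential inequality would involve the wrong quantity and would prove only a lower bound on $\max_\tau\exp\bke{\int_0^\tau\norm{\na u}_\infty}$, which is strictly weaker than \eqref{large.lagrangian2} and does not imply \eqref{large.lagrangian1}. The paper's fix is elementary and you already have all its ingredients: since both coordinate axes are invariant under $\phi(\cdot,t)$ \emph{and} under $\phi^{-1}(\cdot,t)$, one writes $x_1=\phi_1^{-1}(\phi(x,t),t)-\phi_1^{-1}(0,\phi_2(x,t),t)\leq\norm{D(\phi^{-1})(\cdot,t)}_\infty\,\phi_1(x,t)$, and the inverse of a $2\times2$ matrix of determinant $1$ has the same sup-norm of entries, so $\norm{D(\phi^{-1})}_\infty\leq\norm{D\phi}_\infty=M$; this yields the componentwise two-sided bounds $\phi_i/M\leq x_i\leq M\phi_i$ on the closed first quadrant, hence $\frac{\phi_1\phi_2}{|\phi|^2}\geq\frac{1}{M^2}\frac{x_1x_2}{|x|^2}$, which is what the kernel estimate actually needs. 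With that substitution your argument closes.
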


\begin{proof}
	Using the parity of $g$, it can be easily checked that $\om$ is odd in $x_1$ and $x_2$, and hence $\phi(x,t)=(\phi_1(x_1,x_2,t),\phi_2(x_1,x_2,t))$ satisfies
	\begin{align}
	\phi_1(0,x_2,t)&\equiv 0,\quad \phi_2(x_1,0,t)\equiv 0 \label{phi.par} \qquad \forall x_1\in \R, x_2\in \R ,\\
	&\phi(0,t)\equiv 0. \nonumber
	\end{align}
	Also, the Frechet derivative $[Du(0,t)]_{ij}=\pa_j u_i(0,t)$ of $u=\na^{\perp}\Del^{-1}T_{\ga}\om$  at $x=0$ takes the form 
	\[
	Du(0,t)= 
	\begin{pmatrix}
	\la(t) & 0\\
	0 	   & -\la(t)	
	\end{pmatrix},
	\]
	where $\la(t)=-\pa_{12}\Del^{-1}T_{\ga}w(0,t)$. Then, this implies
	\[
	(D\phi)(0,t) = 
	\begin{pmatrix}
		\exp\left(\int_0^t \la(\tau) d\tau\right) & 0\\
		0	& \exp\left(-\int_0^t \la(\tau) d\tau\right)
	\end{pmatrix}.
	\]
	
	On the other hand, by \eqref{phi.par} and the sign preservation property of $\phi_1$ and $\phi_2$, we obtain for any $x_1\geq 0$, $x_2\geq 0$, and $t\geq 0$,
	\EQ{\label{phi.x}
	\frac 1{\norm{D\phi(\cdot,t)}_{\infty}} \phi_1(x_1,x_2,t) 
	&\leq x_1
	\leq \phi_1(x_1,x_2,t)\norm{D\phi(\cdot,t)}_{\infty},\\
	\frac 1{\norm{D\phi(\cdot,t)}_{\infty}} \phi_2(x_1,x_2,t) 
	&\leq x_2
	\leq \phi_2(x_1,x_2,t)\norm{D\phi(\cdot,t)}_{\infty}.
	}
	Thus, for any $x_1>0$, $x_2>0$, and $t\geq 0$,
	\EQN{
	\frac{\phi_1\phi_2}{\phi_1^2+\phi_2^2}
	=\frac 1{\frac{\phi_1}{\phi_2}+\frac{\phi_2}{\phi_1}} 
	\geq \frac 1{\norm{D\phi}_{\infty}^2}\frac{x_1x_2}{|x|^2}.	
	}
	
	Recall that we denote the kernel of the operator $-\pa_{12}\De^{-1}T_\ga$ by $K$.
By Lemma \ref{estimate.K} and Lemma \ref{estimate.tdK}, for any $x=(x_1,x_2)$ with $x_1>0$ and $x_2>0$, and $t\geq 0$,
	\EQN{
	K(\phi(x,t))
	&\gtrsim_{\ga} \left(\frac{\phi_1\phi_2}{|\phi|^2}\right) \frac 1{|\phi|^2} \ln^{-\ga}\left(e+\frac 1{|\phi|}\right) e^{-|\phi|^2}\\
	&\gtrsim \frac 1{\norm{D\phi}_{\infty}^{4}}\frac{x_1x_2}{|x|^{4}} 
	\ln^{-\ga}\left(e+\frac{\norm{D\phi}_{\infty}}{|x|}\right) e^{-\norm{D\phi}_{\infty}^2|x|^2}\\
	&\gtrsim \frac 1{\norm{D\phi}_{\infty}^{4}}\frac{x_1x_2}{|x|^{4}} 
	\ln^{-\ga}\left(e+\frac{1}{|x|}\right)\left(1+
	\ln\left(1+\norm{D\phi}_{\infty}\right) \right)^{-\ga}
	e^{-\frac 14\norm{D\phi}_{\infty}^4} e^{-|x|^4}\\
	&\gtrsim_{\ga} e^{- \norm{D\phi(\cdot,t)}_{\infty}^4}\frac{x_1x_2}{|x|^{4}}  
	\ln^{-\ga}\left(e+\frac{1}{|x|}\right)
	e^{-|x|^4}.
	}

	Now, we estimate $\la(t)$ from below
	\EQN{
	\la(t) 
	&=\int_{\R^2} K(y)w(y,t) dy=4\int_{y_1>0, y_2>0} K(y)w(y,t) dy\\
	&= 4\int_{x_1>0, x_2>0} K(\phi(x,t)) g(x) dx\\
	&\gtrsim_{\ga} e^{-\norm{D\phi(\cdot,t)}_{\infty}^4}
	\int_{x_1>0, x_2>0} g(x) \frac{x_1x_2}{|x|^{4}}  
	\ln^{-\ga}\left(e+\frac{1}{|x|}\right)
	e^{-|x|^4}  dx\\
	&= e^{-\norm{D\phi(\cdot,t)}_{\infty}^4} G.
	}

	Then, since 
	\[
	\norm{D\phi(\cdot,t)}_{\infty} \geq |D\phi(0,t)| \geq \exp\left(\int_0^t\la(\tau) d\tau\right), \quad\forall t\geq 0
	\]
	where $|\cdot|$ is the usual matrix norm,
	we have a positive constant $C>0$ depending only on $\ga$ such that
	\[
	\norm{D\phi(\cdot,t)}_{\infty} \geq \exp\left(\frac 1{4}CG \int_0^t e^{-\norm{D\phi(\tau)}_{\infty}^4} d\tau \right), \quad\forall t\geq 0.
	\]
	
	This implies that
	\EQN{
	\ddt \exp\left(CG \int_0^t e^{-  \norm{D\phi(\tau)}_{\infty}^4} d\tau\right)
	&=\exp\left(CG \int_0^t e^{- \norm{D\phi(\tau)}_{\infty}^4} d\tau\right)
	CG e^{- \norm{D\phi(t)}_{\infty}^4}\\
	&\leq CG \norm{D\phi(\tau)}_{\infty}^4 e^{- \norm{D\phi(\tau)}_{\infty}^4}
	\leq CG.
	}
	
	Therefore, we obtain
	\[
	\exp\left(CG \int_0^t e^{- \norm{D\phi(\tau)}_{\infty}^4} d\tau\right)
	\leq 1+CGt.
	\]
	The inequalities \eqref{large.lagrangian1} and \eqref{large.lagrangian2} then follows easily. 
\end{proof}

\begin{remark}\label{rist.lld} By a slight modification of the proof, we can restrict the region where the large Lagrangian deformation occurs;
	\EQ{
		\max_{0\leq \tau\leq t}\norm{D\phi(\cdot,\tau)}_{L^\infty(B(0,R))}
		\geq \ln^{\frac 14}\left(\frac{CGt}{\ln(1+CGt)}\right), 
		\quad \forall 0< t\leq 1,
	}
if $R>0$ satisfies 
\[
\supp(g)\subset B(0,R) \quad\text{and}\quad
\phi^{-1}(B_g,t)\subset B(0,R)
\]
for all $0\leq t\leq 1$, 
where $B_g=B(0,R_g)$ is the smallest ball containing $\bigcup_{0\leq t\leq 1}\supp(\om(\cdot,t))$. Indeed, if $x$ is in  $\supp(g)$, then $\phi(x,t)\subset \supp(\om(\cdot,t))$ and $|\phi(x,t)|\leq R_g$ when $0\le t\leq 1$. This implies that for $0\leq t\leq 1$
\[
\norm{D(\phi^{-1})(\cdot,t)}_{L^\infty(B_g)}
=\norm{(D\phi)^{-1}(\phi^{-1}(\cdot,t),t)}_{L^\infty(B_g)} 
\le \norm{D\phi(\cdot,t)}_{L^\infty(B(0,R))}.
\]
In the inequality, we use $|\det(D\phi(\cdot,t))|=1$ for any $t\geq 0$. Then, a modification of \eqref{phi.x} holds; for $x=(x_1,x_2)\in \supp(g)$, $x_1\geq 0$, $x_2\geq 0$, and $0\leq t\leq 1$, we have
\EQN{
	\frac 1{\norm{D\phi(\cdot,t)}_{L^\infty(B(0,R))}} \phi_1(x_1,x_2,t) 
	&\leq x_1
	\leq \phi_1(x_1,x_2,t)\norm{D\phi(\cdot,t)}_{L^\infty(B(0,R))},\\
	\frac 1{\norm{D\phi(\cdot,t)}_{L^\infty(B(0,R))}} \phi_2(x_1,x_2,t) 
	&\leq x_2
	\leq \phi_2(x_1,x_2,t)\norm{D\phi(\cdot,t)}_{L^\infty(B(0,R))}.
}
The rest of the proof is almost identical. 
\end{remark}

\section{Local critical Sobolev norm inflation}\label{sec.norm.infl}
In this section, we show that the inflation of the critical Sobolev norm can be induced from the largeness of Lagrangian deformation. Then, based on this, we construct a family of local solutions whose critical norm gets larger in a shorter time, while the critical norm of initial data gets smaller. 

We first recall Lemma 4.1 in \cite{BL15}.
\begin{lemma}\label{perb.char}
	Suppose $u=u(x,t)$ and $v=v(x,t)$ are smooth vector fields on $\R^2\times \R$. Let $\phi:\R^2\times \R \to \R^2$ and $\td\phi:\R^2\times \R \to \R^2$ be the solutions to 
	\[
	\begin{cases}
	\pa_t \phi(x,t) = u(\phi(x,t),t)\\
	\phi(x,0) = x
	\end{cases}
	\]
	and
	\[
	\begin{cases}
	\pa_t \td\phi(x,t) = u(\td\phi(x,t),t)+v(\td\phi(x,t),t)\\
	\td\phi(x,0) = x.
	\end{cases}
	\] 
	Then, we have positive constants $C$ and $C_1$ satisfying
	\EQN{
	\max_{0\leq t\leq 1}&(\norm{(\td\phi-\phi)(\cdot,t)}_{\infty}
	+\norm{(D\td\phi-D\phi)(\cdot,t)}_{\infty})\\
	&\leq C \max_{0\leq t\leq 1}\norm{v(\cdot,t)}_{W^{1,\infty}}\cdot\exp\bke{C_1\max_{0\leq t\leq 1}\norm{Dv(\cdot,t)}_{\infty}},
	}
	where $C$ depends on  $\norm{D^2u(\cdot,t)}_{L^{\infty}([0,1]\times\R^2)}$ and $\norm{Du(\cdot,t)}_{L^{\infty}([0,1]\times\R^2)}$, and $C_1$ is an absolute constant. 
\end{lemma}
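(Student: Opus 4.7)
The plan is a straightforward double application of Gr\"onwall's inequality to the ODE system satisfied by the difference of flows and by the difference of their spatial derivatives. The only nonstandard point is to route the exponential cost through $\norm{Dv}_\infty$ (which the lemma allows) while burying the $u$-dependence into the absolute constant $C$.

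First, subtracting the two defining ODEs yields
\EQN{
\pa_t (\td\phi-\phi)(x,t)
= \bkt{u(\td\phi(x,t),t)-u(\phi(x,t),t)} + v(\td\phi(x,t),t).
}
The bracketed term is bounded by $\norm{Du}_\infty\,|\td\phi-\phi|$ via the mean value theorem, and the inhomogeneous term by $\norm{v(\cdot,t)}_\infty$. Since both flows agree at $t=0$, Gr\"onwall immediately gives
\EQN{
\max_{0\le t\le 1}\norm{(\td\phi-\phi)(\cdot,t)}_\infty
\le C_0\,\max_{0\le t\le 1}\norm{v(\cdot,t)}_\infty,
}
with $C_0$ depending only on $\norm{Du}_{L^\infty([0,1]\times\R^2)}$. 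This handles the position part of the claim.

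For the derivative part, I would differentiate the two ODEs in $x$. Writing $M:=D\phi$ and $\td M:=D\td\phi$, one has
\EQN{
\pa_t M = (Du)(\phi,t)\,M, \qquad
\pa_t \td M = \bkt{(Du)(\td\phi,t)+(Dv)(\td\phi,t)}\td M,
}
with $M(0)=\td M(0)=I$. A preliminary Gr\"onwall application on $\td M$ alone produces the a priori bound
\EQN{
\max_{0\le t\le 1}\norm{\td M(\cdot,t)}_\infty
\le \exp\bke{\norm{Du}_\infty}\exp\bke{\norm{Dv}_\infty}.
}
Subtracting the two matrix ODEs,
\EQN{
\pa_t(\td M-M)
= (Du)(\phi,t)(\td M-M)
+ \bkt{(Du)(\td\phi,t)-(Du)(\phi,t)}\td M
+ (Dv)(\td\phi,t)\,\td M.
}
The second term on the right is controlled by $\norm{D^2 u}_\infty\,\norm{\td\phi-\phi}_\infty\,\norm{\td M}_\infty$, which by the first step is $\lec \norm{v}_{L^\infty}\exp(\norm{Dv}_\infty)$; the third by $\norm{Dv}_\infty\exp(\norm{Dv}_\infty)$. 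A second Gr\"onwall then produces
\EQN{
\max_{0\le t\le 1}\norm{(\td M-M)(\cdot,t)}_\infty
\lec \bke{\norm{v}_{L^\infty}+\norm{Dv}_{L^\infty}}
\exp\bke{C_1\norm{Dv}_\infty},
}
where the suppressed constant absorbs the prefactors $\exp(\norm{Du}_\infty)$ and $\norm{D^2u}_\infty$. Combining with the position bound yields the lemma.

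There is no real obstacle; the only thing to watch is the bookkeeping that isolates the factor $\exp(C_1\norm{Dv}_\infty)$ with $C_1$ absolute. This is achieved by splitting the two exponentials $\exp(\norm{Du}_\infty+\norm{Dv}_\infty)$ arising from the a priori bound on $\td M$ into a product, assigning the $\norm{Du}$-exponential into $C$ (permitted by the lemma's hypotheses) and leaving $\exp(\norm{Dv}_\infty)$ — together with whatever Gr\"onwall integration contributes — inside the stated exponential. The resulting $C_1$ is then just the sum of the Gr\"onwall exponents that multiply $\norm{Dv}_\infty$ in the integration, hence an absolute constant.
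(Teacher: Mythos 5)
Your proof is correct. The paper does not prove this lemma itself (it is quoted verbatim from Bourgain--Li \cite{BL15}), and your two-stage Gr\"onwall argument --- first on $\td\phi-\phi$, then on $D\td\phi-D\phi$ after the a priori bound $\norm{D\td\phi}_\infty\le \exp(\norm{Du}_\infty+\norm{Dv}_\infty)$ --- is the standard proof, with the bookkeeping of the $\exp(C_1\norm{Dv}_\infty)$ factor handled correctly.
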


The following is the main proposition in this section. 
\begin{proposition}\label{H1.norm.inflation.prop} Suppose that $\om$ is a smooth solution to \eqref{main.eq} with the initial data $\om_0$ and its velocity $u=-\na^{\perp}\De^{-1}T_\ga\om$, $\ga>0$, and satisfies the following properties.
	\begin{enumerate}[(i)]
		\item $\norm{\om_0}_{\infty}+\norm{\om_0}_1+\norm{\om_0}_{\dot{H}^{-1}}<\infty.$
		\item There exists $R_0>0$ such that
		\[
		\supp(\om_0)\subset B(0,R_0)
		\]
		and the characteristic line $\phi$, i.e., the solution to
		\[
		\begin{cases}
		\pa_t\phi(x,t)=u(\phi(x,t),t) &\R^2\times(0,\infty)\\
		\phi(x,0) = x				  &\R^2,
		\end{cases}
		\]
		satisfies
		\EQ{\label{lld.M}
		\norm{(D\phi)(\cdot,t_0)}_{L^\infty(B(0,R_0))}>L
		}
		for some $0<t_0\leq 1$ and $L> 8^9\cdot 10^6$. 
	\end{enumerate}
	 
		Then, we can construct a new smooth solution $\td \om$ to \eqref{main.eq} for a new initial data $\td \om_0$ which satisfies the following conditions.
	\begin{enumerate}[(i)]
		\item The size of the new initial data is controlled by that of the original one,
		\begin{align}
		\norm{\td\om_0}_{\dot{H}^{-1}} &\leq 2\norm{\om_0}_{\dot{ H}^{-1}}\label{IC.negative.norm}\\
		\norm{\td\om_0}_1 \leq 2\norm{\om_0}_1&,\quad
		\norm{\td\om_0}_{\infty} \leq 2\norm{\om_0}_{\infty},\label{IC.lp.norm}\\
		\norm{\td\om_0}_{\dot{H}^1}&\leq \norm{\om_0}_{\dot{H}^1} + L^{-\frac 12}.\label{IC.critical.norm} 
		\end{align}
		
		\item The new initial data is compactly supported,
		\EQ{\label{cpt.supp.tdom0}
		\supp(\td\om_0)\subset B(0,R_0).
	}

		\item A large Lagrangian deformation at $t_0$ induces $\dot{H}^1$-norm inflation:
		\EQ{\label{H1.inflation}
		\norm{\td\om(\cdot,t_0)}_{\dot{H}^1(\R^2)} >L^{\frac 13}.
		}	
	\end{enumerate}
\end{proposition}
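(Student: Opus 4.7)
The plan follows the Bourgain--Li paradigm \cite{BL15}: add to $\omega_0$ a small, highly oscillatory wave packet $h$ localized at a point where the flow $\phi(\cdot, t_0)$ exhibits strong hyperbolic stretching. Incompressibility gives $\det D\phi \equiv 1$, so \eqref{lld.M} produces $x_0 \in B(0, R_0)$ at which $A := D\phi(x_0, t_0)$ has singular values $\sigma_1 \ge L/2$ and $\sigma_2 = 1/\sigma_1 \le 2/L$; pick a unit $v \in S^1$ realizing $|A^{-\top}v| = \sigma_1$. For a fixed bump $\chi \in C_c^\infty(B(0,1))$ and parameters $R > 0$, $\lambda \gg 1$, $c > 0$ to be chosen in this order, define
\[
h(x) := \frac{c}{\lambda}\,\chi\!\left(\frac{x - x_0}{R}\right)\sin\!\bigl(\lambda\, v\cdot(x - x_0)\bigr),\qquad \tilde\omega_0 := \omega_0 + h.
\]
Take $R$ small enough that $B(x_0, R) \subset B(0, R_0)$, enforcing \eqref{cpt.supp.tdom0}, and that $D\phi(\cdot, t_0)$ is essentially constant on $B(x_0, R)$. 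Setting $c = (2R)^{-1} L^{-1/2}$ then yields $\|h\|_{\dot H^1} \lesssim cR \lesssim L^{-1/2}$, producing \eqref{IC.critical.norm}. In the regime $\lambda R \gg 1$, a direct wave-packet computation gives $\|h\|_\infty \lesssim c/\lambda$, $\|h\|_{L^1} \lesssim cR^2/\lambda$, and $\|h\|_{\dot H^{-1}} \lesssim cR/\lambda^2$, all forced small by taking $\lambda$ large, which delivers \eqref{IC.negative.norm}--\eqref{IC.lp.norm}.

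To prove \eqref{H1.inflation}, I would first note that the velocity discrepancy $w := \tilde u - u = \nabla^\perp \Delta^{-1} T_\gamma(\tilde\omega - \omega)$ satisfies $\|w\|_{W^{1,\infty}} + \|Dw\|_\infty \to 0$ as $\lambda \to \infty$ at fixed $cR$, via standard $L^1 \cap L^\infty$ bounds on the regularized Biot--Savart kernel applied to the transported perturbation (a short bootstrap yields $\tilde\omega(t) - \omega(t) \approx h \circ \phi^{-1}(\cdot, t)$). Lemma \ref{perb.char} then gives $\|\tilde\phi - \phi\|_{C^1} \ll \sigma_1^{-1}$ on $[0, t_0]$. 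The transport identity $\tilde\omega(\tilde\phi(x, t_0), t_0) = \tilde\omega_0(x)$ combined with $\det D\tilde\phi \equiv 1$ yields
\[
\|\tilde\omega(\cdot, t_0)\|_{\dot H^1}^2 = \int_{\R^2} \bigl|D\tilde\phi(x, t_0)^{-\top} \nabla(\omega_0 + h)(x)\bigr|^2\,dx = \mathcal{A} + 2\mathcal{B} + \mathcal{C},
\]
where $\mathcal{A} := \int |D\tilde\phi^{-\top}\nabla\omega_0|^2 \approx \|\omega(\cdot,t_0)\|_{\dot H^1}^2 \ge 0$, $\mathcal{C} := \int |D\tilde\phi^{-\top}\nabla h|^2$, and $\mathcal{B}$ is the cross term. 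On $B(x_0, R)$ the dominant piece of $\nabla h$ is $c\,\chi((x - x_0)/R)\, v\cos(\lambda v \cdot (x - x_0))$; combined with $D\tilde\phi^{-\top} = A^{-\top} + O(\sigma_1^{-1})$ and $|A^{-\top}v| = \sigma_1$, averaging $\cos^2$ against $\chi^2$ gives $\mathcal{C} \gtrsim c^2 \sigma_1^2 R^2 \gtrsim L$, while non-stationary phase in the wave direction $v$ forces $|\mathcal{B}|/\mathcal{C} \to 0$ as $\lambda \to \infty$. Hence $\|\tilde\omega(\cdot, t_0)\|_{\dot H^1}^2 \gtrsim \mathcal{C} \gtrsim L$, that is, $\|\tilde\omega(\cdot, t_0)\|_{\dot H^1} \gtrsim L^{1/2} \gg L^{1/3}$.

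The main obstacle is coordinating the competing scales: \eqref{IC.critical.norm} forces $cR \lesssim L^{-1/2}$, whereas \eqref{H1.inflation} demands the transported norm $c\sigma_1 R \gtrsim L^{1/3}$, so $\sigma_1 \gtrsim L^{5/6}$ is required. The hypothesis $L > 8^9 \cdot 10^6$ gives $\sigma_1 \ge L/2 \gg L^{5/6}$, providing an ample margin. The high frequency $\lambda$ plays a dual role: it drives both the $L^p$, $\dot H^{-1}$ norms of $h$ and the $W^{1,\infty}$ size of $w$ to zero (so $\tilde\phi \approx \phi$ through Lemma \ref{perb.char}), and simultaneously decouples the cross term $\mathcal{B}$ from the main inflation term $\mathcal{C}$ via non-stationary phase; only these two roles together allow the amplification factor $\sigma_1$ to be harvested cleanly. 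A secondary technical point is that the amplification analysis is fundamentally local to $B(x_0, R)$, requiring $D\phi(\cdot, t_0) \approx A$ throughout, which is handled by fixing $R$ before $c$ and $\lambda$.
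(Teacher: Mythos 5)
Your construction is, at its core, the same as the paper's: a high\--frequency wave packet of amplitude $\sim (\lambda R\sqrt L)^{-1}$ planted on a small ball where $D\phi(\cdot,t_0)$ is large, stability of the flow map in $W^{1,\infty}$ as $\lambda\to\infty$ via Lemma \ref{perb.char}, and then harvesting the stretching from the transport identity. The paper picks a single large matrix entry (say $\pa_2\phi_2$) and oscillates in $x_1$ rather than diagonalizing $D\phi$, and its amplitude $\frac{1}{20k\sqrt L}\cos(kx_1)\cdot\frac1\de\Psi(\frac{x-x_L}{\de})$ has exactly your scaling $cR\sim L^{-1/2}$; these are cosmetic differences. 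Two substantive remarks.

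First, your cross term $\mathcal B$. The paper does not use oscillation here: it reduces WLOG to $\norm{\na\om(\cdot,t_0)}_2\le L^{1/3}$ (otherwise take $\td\om_0=\om_0$ and stop), and then uses the elementary $|a+b|^2\ge\frac12|b|^2-|a|^2$, giving $\norm{\na\td\om(t_0)}_2^2\gtrsim L-\frac12L^{2/3}-O(k^{-2/3})>L^{2/3}$. Your route keeps $\mathcal A$ arbitrary and kills $\mathcal B$ by Riemann--Lebesgue, which is workable but more delicate than "non\--stationary phase" suggests: the non\--oscillatory factor contains $D\td\phi^{-\top}\na\om_0$, which depends on $\lambda$, so you must first replace $D\td\phi$ by $D\phi$ (using the $W^{1,\infty}$ stability) before letting $\lambda\to\infty$; repeated integration by parts is not available since $D^2\td\phi$ is not uniformly controlled in $\lambda$. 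The paper's WLOG reduction sidesteps this entirely and is why the conclusion is stated with $L^{1/3}$ rather than your $L^{1/2}$.

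Second, and this is the one genuine gap: you justify $\norm{Dw}_\infty\to0$ by "standard $L^1\cap L^\infty$ bounds on the regularized Biot--Savart kernel." That cannot work as stated: $D\na^\perp\De^{-1}T_\ga$ is still a singular integral (the logarithmic regularization does not make it bounded on $L^\infty$), so smallness of $\td\om-\om$ in $L^1\cap L^\infty$ does not give smallness of $Dw$ in $L^\infty$. The paper's actual argument is a three\--step chain: (a) uniform\--in\--$k$ bounds on $\norm{\na\td\om}_4$ and $\norm{\na\om}_4$ via a log\--type interpolation and Gr\"onwall; (b) the $L^2$ stability estimate $\norm{\td\om-\om}_2\lesssim\norm{\eta_0}_2\lesssim k^{-1}$ from the difference equation; and (c) the interpolation $\norm{\na(\td u-u)}_\infty\lesssim(\norm{\na\td\om}_4+\norm{\na\om}_4)^{2/3}\norm{\td\om-\om}_2^{1/3}$. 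Your "short bootstrap yielding $\td\om(t)-\om(t)\approx h\circ\phi^{-1}$" gestures at the right object, but without (a) and (c) the step feeding Lemma \ref{perb.char} is unjustified; this is precisely the technical heart of the paper's Step 2 and must be supplied.
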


\textit{Proof of the Proposition. }

\noindent\texttt{Sketch of the idea.}
Let $\td\phi$ be the characteristic line corresponding to the new smooth solution $\td\om$. Then, it solves
\[
\begin{cases}
\pa_t\td\phi(x,t)=\td u(\td \phi(x,t),t) &\R^2\times(0,\infty)\\
\td\phi(x,0) = x				  &\R^2,
\end{cases}
\]
where $\td u=\na^{\perp}\Del^{-1}T_{\ga}\td\om$. Since $\td \om(\td\phi(x,t),t)=\td\om_0(x)$, we can write $\dot{H}^1$-norm of $\td\om$ as
\EQ{\label{expressim.H1}
\norm{\na\td\om(\cdot,t)}_{2}^2 
=\int_{\R^2} |\na\td\om_0(x)\cdot(\na^{\perp}\td\phi_2)(x,t)|^2 dx
+\int_{\R^2} |\na\td\om_0(x)\cdot(\na^{\perp}\td\phi_1)(x,t)|^2 dx.
}

By Lemma \ref{perb.char}, if we choose a new initial data $\td \om_0$ to make $\norm{u-\td u}_{W^{1,\infty}}$ small, $\norm{D\phi-D\td\phi}_{\infty}$ also gets small. It follows that the main part in the right hand side of \eqref{expressim.H1} is the one in which $\td\phi$ is replaced by $\phi$.  
Then, we can produce the $\dot{H}^1$-norm inflation of $\td \om$ at $t_0$ from the largeness of Lagrangian deformation $D\phi$ in \eqref{lld.M} sense. Indeed, we construct the desired new initial data by adding a perturbation, localized at the point where the large Lagrangian deformation occurs, to the original initial data. 

\bigskip

\noindent\texttt{Step 1.} Construction of the new initial data $\td\om_0$.

Assume 
\[
\norm{\na \om(\cdot, t_0)}_{2}\leq L^{\frac 13}. 
\]
Otherwise, $\td\om_0 =\om_0$ completes the proof.

By the assumption \eqref{lld.M} and the smoothness of $\phi$, we can find $x_L=(x_L^1,x_L^2)$, $x_L^1x_L^2 \neq 0$, in $B(0,R_0)$ such that one of the entries of $D\phi(x_L,t_0)$, say $\pa_2\phi_2(x_L,t_0)$, satisfies
\[
|\pa_2\phi_2(x_L,t_0)|>L.  
\]
If we further use the continuity of $D\phi$, we can choose sufficiently small $\del>0$ satisfying $\del\ll \min(x_L^1, x_L^2)$, $B(x_L,\del)\subset B(0,R_0)$, and
\[
|\pa_2\phi_2(x,t_0)|>L, \qquad\forall |x-x_L|<\del. 
\]

Choose $\Psi$ be a smooth radial bump function which is compactly supported on the unit ball $B(0,1)$ and satisfies $\Psi\equiv 1$ on $B(0,\frac 12)$ and $0\leq \Psi\leq 1$. Set $\Psi_\del = \frac 1{\del}\Psi(\frac {x-x_L}{\del})$. By the choice of $x_L$ and $\del$, we note that the support of $\Psi_\del$ lies on one of the four quadrants. Now, let $b$ be the odd extension of $\Psi_\del$ in both variables. Then, we define the new initial data $\td\om_0$, adding a perturbation
\[
\eta_0(x) = \td\om_0(x) -\om_0(x) = \frac 1{20k\sqrt L}\cos (kx_1) b(x), 
\] 
to the original one $\om_0$ where $k$ will be chosen later sufficiently large. We can easily see that the perturbation $\eta_0$ is odd in both variables. 

\bigskip

\noindent\texttt{Step 2.} Check the required conditions on $\td \om$.

By its construction, the support of $\eta_0$ is contained in $B(0,R_0)$, so that \eqref{cpt.supp.tdom0} holds.    

To get \eqref{IC.negative.norm} and \eqref{IC.lp.norm}, we estimate the corresponding Sobolev norms of $\eta_0$,
\EQN{
\norm{\eta_0}_1 &\leq \frac 1{20k\sqrt L}\norm{b}_1\qquad
\norm{\eta_0}_{\infty} \leq \frac 1{20k\sqrt L}\norm{b}_{\infty}\\
&\norm{\eta_0}_{\dot{H}^{-1}}\lesssim
\norm{\widehat{x\eta_0}}_{\infty} + \norm{\eta_0}_2\lesssim \frac 1k,
}
where the estimate for the negative Sobolev norm follows from the parity of $\eta_0$. For sufficiently large $k$, both \eqref{IC.negative.norm} and \eqref{IC.lp.norm} hold true.

Finally, \eqref{IC.critical.norm} follows from
\[
\norm{b}_2 \leq 4\norm{\Psi_\del}_2 = 4\norm{\Psi}_2<4\sqrt{\pi},
\]
and
\[
\norm{\na\eta_0}_{2} 
\leq \frac 1{20k\sqrt{L}}\bke{k\norm{b}_2 + \norm{\na b}_2} \leq \frac 1{\sqrt L},
\]
provided that $k$ is sufficiently large.

Now, consider the $\dot{H}^1$-norm inflation of the new solution $\td\om$. As we mentioned, we first show that the perturbation in Lagrangian deformation is small. For this purpose, we consider the perturbation of velocity in $W^{1,\infty}(\R^2)$.

Since we have 
\EQ{\label{pert.vel}
\norm{\na (\td u-u)}_{\infty}
\lesssim_{\ga}(\norm{\na \td\om}_4+\norm{\na \om}_4)^{\frac 23}\norm{\td\om-\om}_2^{\frac 13},
}
it is enough to consider the terms on the right hand side. The terms $\norm{\na \td\om}_4$ and $\norm{\na \om}_4$ are estimated by the usual energy method. From the equation for $\td\om$, we have
\EQ{\label{eq.14}
\ddt \norm{\na \td\om}_4^4 \leq 4\norm{\na \td u}_{\infty}\norm{\na\td\om}_4^4.
}
By the log-type interpolation inequality,
\EQN{
\norm{\na \td u(\cdot,t)}_{\infty}
&\lesssim  1+ \norm{\td\om_0}_{\infty}\log (10+\norm{\td\om_0}_{2}+\norm{\na \td\om(\cdot,t)}_4^4),
}
we obtain 
\EQ{\label{bdd.14}
\max_{0\leq t\leq 1}\norm{\na \td\om(\cdot,t)}_{4}\leq C,
}
for some constant $C=C(\norm{\na \td\om_0}_4,\norm{\td\om_0}_{2})$. Note that we can choose an upper bound $C$ which is independent of $k$. Similarly, we have
\EQ{\label{bdd.14.td}
\max_{0\leq t\leq 1}\norm{\na \om(\cdot,t)}_{4}\leq C
}
for some positive constant $C$ independent of $k$. 

On the other hand, from the equations for $\td\om$ and $\om$, we get the equation for $\eta=\om-\td\om$, 
\[
\pa_t \eta + \na^{\perp}\Del^{-1}T_{\ga}\eta \cdot \na\om
+\na^{\perp}\Del^{-1}T_{\ga}\td\om \cdot \na \eta =0.
\]
Taking $\int \cdot \eta dx$ on both side, $\eta$ satisfies
\EQN{
\frac 12\ddt\norm{\eta(\cdot,t)}_2^2
&\leq \norm{\na^{\perp}\Del^{-1}T_{\ga}\eta}_{4} 
\norm{\na\om}_4\norm{\eta}_2
\lesssim \norm{\na\om}_4\norm{\eta}_2^2.
}
Here, the last inequality follows from Hardy-Littlewood Sobolev inequality and the compactness of the support of $\eta$. By Gr\"{o}nwall inequality, we obtain
\EQ{\label{est.eta.2}
\max_{0\leq t\leq 1}\norm{\eta(\cdot,t)}_2 
\lesssim \norm{\eta_0}_2
\lesssim \frac 1k. 
}
Combining with \eqref{pert.vel}, \eqref{bdd.14}, and \eqref{bdd.14.td}, the perturbation of $u$ can be estimated by
\[
\norm{\na(\td u- u)}_{\infty} \lesssim {k^{-\frac 13}}.
\]

Finally, by Gagliardo-Nirenberg interpolation inequality, for any $0\leq t\leq 1$, we have  
\[
\norm{(\td u - u)(\cdot,t)}_{\infty} \lesssim \norm{\na(\td u- u)}_{\infty}^{\frac 13}\norm{\td u - u}_4^{\frac 23}
\lesssim {k^{-\frac 19}} \norm{\eta}_{2}^{\frac 23}\lesssim k^{-\frac 79}.
\]
Therefore, Lemma \ref{perb.char} gives the desired estimate for the perturbation of Lagrangian deformation,
\[
\max_{0\leq t\leq 1}(\norm{(\td\phi-\phi)(\cdot,t)}_{\infty}
+\norm{(D\td\phi-D\phi)(\cdot,t)}_{\infty})
\lesssim k^{-\frac 13}.  
\]

Now, we are ready to get $\dot{H}^1$-norm inflation. 
Recall \eqref{expressim.H1} and we further estimate its right hand side as follows.
\EQ{\label{H1.norm.inflation.proof}
	\norm{\na\td\om(\cdot,t_0)}_{2}^2 
	\geq& \int_{\R^2} |\na\td\om_0(x)\cdot(\na^{\perp}\td\phi_2)(x,t_0)|^2 dx\\
	\geq& \ \frac 12\int_{\R^2} |\na\td\om_0(x)\cdot(\na^{\perp}\phi_2)(x,t_0)|^2 dx-O(k^{-\frac 23})\\
	\geq& \ \frac 14 \int_{\R^2} |\na\eta_0(x)\cdot(\na^{\perp}\phi_2)(x,t_0)|^2 dx\\
	&-\frac 12\int_{\R^2} |\na\om_0(x)\cdot(\na^{\perp}\phi_2)(x,t_0)|^2 dx-O(k^{-\frac 23}).
}

By the assumption on $\om$, we have
\[
\int_{\R^2} |\na\om_0(x)\cdot(\na^{\perp}\phi_2)(x,t_0)|^2 dx
\leq \norm{\na \om(\cdot,t_0)}_2^2 \leq L^{\frac 23}.
\]
On the other hand, by the construction of the perturbation $\eta_0$, we obtain
\EQN{
\int_{\R^2} |\na\eta_0(x)\cdot(\na^{\perp}\phi_2)(x,t_0)|^2 dx
&\geq \frac 1{800L}\int_{\R^2} |\sin(kx_1)b(x)\pa_2\phi_2(x,t_0)|^2 dx-O(k^{-2})\\
&\geq \frac {L}{800 }\frac 1{\del^2}\int_{|x-x_L|<\frac 12\del} |\sin(kx_1)|^2 dx-O(k^{-2})\\
&\geq \frac{1}{2^6\cdot 10^2}L -O(k^{-1}).
}

Therefore, we get the desired norm inflation
\[
\norm{\na \td\om(\cdot,t_0)}_2^2 \geq \frac {1}{2^8\cdot 10^2}L -\frac 12 L^{\frac 23} -O(k^{-\frac 23})
> L^{\frac 23}
\]
provided that $L>8^9\cdot 10^6$ and $k$ is sufficiently large. In other words, \eqref{H1.inflation} is obtained.   

\hfill$\square$

\begin{remark}\label{local.sol.family} Based on Proposition \ref{large.lagrangian} and Proposition \ref{H1.norm.inflation.prop}, we can construct a family of initial data having $\dot{H}^1$-norm inflation. 
	
	Choose a nonzero radial bump function $\varphi\in C_c^{\infty}(\R^2)$ satisfying $0\leq \varphi\leq 1$, $\varphi\equiv 1$ on $B(0,\frac 12)$, and $\supp(\varphi)\subset B(0,1)$. Then, we define $\rho\in C_c^{\infty}(\R^2)$ by
	\EQ{\label{defn.rho}
	\rho(x)=\rho(x_1,x_2)=\sum_{a_1,a_2=\pm 1} a_1a_2\varphi\left(\frac{x_1-a_1,x_2-a_2}{2^{-100}}\right).
	}
	Clearly, the function $\rho$ is odd in both variables, and  
	\[
	\int_{x_1>0, x_2>0} \rho(x)\frac{x_1 x_2}{|x|^{4}}e^{-|x|^4} dx>0.
	\]
	Now, for each $0<\ga\leq \frac 12$, define $g_A\in C_c^{\infty}(\R^2)$ by
	\EQ{\label{defn.gA}
	g_A(x)
	=
	\begin{cases}
	C_A\sum_{a_A\leq j < b_A}\frac 1{j^{\ga}}\rho(2^j x), &0<\ga<\frac 12\\[10pt]
	C_A\sum_{\ln A\leq j < A+\ln A} \frac 1{\sqrt j} \rho(2^j x), &\ga=\frac 12
	\end{cases}
	}
	where $C_A=\frac 1{\sqrt{\ln A}}\frac 1{\ln\ln A}$, $a_A=A^{\frac 1{1-2\ga}}$, and $b_A=(A+\ln A)^{\frac 1{1-2\ga}}$. Note that the summations in \eqref{defn.gA} are over integer $j$ in the range.
	
	First, $g_A$ satisfies all assumptions in Proposition \ref{large.lagrangian}. Obviously, $g_A$ is an odd function in $x_1$ and $x_2$, and $g_A(x_1,x_2)\geq 0$ for $x_1\geq 0$ and $x_2\geq 0$. Using disjoint supports of $\rho(2^j \cdot)$, $j\in \N$, we have for $A\geq e^2$, 
	\EQ{\label{estimate.B}
		G_A&=\int_{x_1>0, x_2>0} g_A(x)\frac{x_1 x_2}{|x|^{4}} \ln^{-\ga}\left(e+\frac 1{|x|}\right) e^{-|x|^4} dx\\
		&=
		C_A\sum_{j}\frac 1{j^{\ga}}
		\int_{x_1>0, x_2>0} \rho(2^j x)\frac{x_1 x_2}{|x|^{4}} \ln^{-\ga}\left(e+\frac 1{|x|}\right) e^{-|x|^4} dx\\
		&=
		C_A\sum_{j }\frac 1{j^{\ga}}
		\int_{\substack{x_1>0, x_2>0\\x\in \supp(\rho)}} \rho(x)\frac{x_1 x_2}{|x|^{4}} \ln^{-\ga}\left(e+\frac {2^j}{|x|}\right) e^{-\frac{|x|^4}{2^{4j}}}  dx\\ 
		&\geq
		C_A\sum_{j }\frac 1{j^{2\ga}} \left(
		\int_{x_1>0, x_2>0} \rho(x)\frac{x_1 x_2}{|x|^{4}} 
		e^{-|x|^4} dx\right)>0.
	}
	Here, the range of summation over $j$ depends on $\ga$, which follows to the one in \eqref{defn.gA}. 
	
	Since for $A\gg 1$, we have
	\EQN{
	\sum_{j}\frac 1{j^{2\ga}}
	&\sim 
	\begin{cases}
	\int_{a_A}^{b_A} \frac 1{x^{2\ga}} dx = \frac 1{1-2\ga} (b_A^{{1-2\ga}}-a_A^{1-2\ga})= \frac 1{1-2\ga} \ln A, &0<\ga<\frac 12\\[10pt]
	\int_{\ln A}^{A+\ln A} \frac 1{x} dx = \ln(A+\ln A) - \ln\ln A, &\ga =\frac 12
	\end{cases}\\
	&\sim_{\ga} \ln A, 
	} 
	$G_A$ has a lower bound 
	\EQN{
	G_A \gtrsim_{\ga} \frac{\sqrt{\ln A}}{\ln\ln A}.
 	}
	Then, by Proposition \ref{large.lagrangian}, for any $A$ with $A\geq A_0$ for some $A_0=A_0(\ga)$, we can find $t_A \in \left(0, \frac 1{\ln\ln A}\right]$  such that the characteristic line $\phi_A$ corresponding to each initial data $g_A$ has a large Lagrangian deformation
	\EQ{\label{lld.gA}
	\norm{D\phi_A(\cdot,t_A)}_{L^\infty(B(0,\frac12))}>\ln^{\frac 14}\ln\ln\ln A.
	}
	Now, we induce critical norm inflation from large Lagrangian deformation. Observe that all assumptions in Proposition \ref{H1.norm.inflation.prop} hold for $\om_0=g_A$, $t_0=t_A$, $L=\ln^{\frac 14}\ln\ln\ln A$, and $R_0=1$, provided that $A$ is sufficiently large. Indeed, using
	\[
	|\phi_A(x,t)-x|\leq \int_0^t |\pa_s\phi_A(x,s)| ds
	\leq \norm{\na^\perp\De^{-1}T_\ga (g_A\circ\phi_A^{-1})}_{L^\infty_{x,t}} t
	\lec \norm{g_A}_1^{\frac 12}\norm{g_A}_\infty^\frac 12 t
	\]
	for all $x\in \R^2$ and $ t\geq 0$,
we have $\phi^{-1}(B_{g_A},t)\subset B(0,1)$ for sufficiently large $A$, where $B_{g_A}$ is defined as in Remark \ref{rist.lld}.
In what follows, we have a desired family $\{\td g_A\}$ of a new initial data which has the following properties: 
	\begin{enumerate}[(i)]
		\item $\td g_A$ gets small as $A$ goes to infinity in the following sense:
		\EQ{\label{ini.tdgA}
			&\norm{\td g_A}_{1}\leq 2\norm{g_A}_1\lesssim  \frac{1}{A^{\ln 4}},\\
			&\norm{\td g_A}_{\infty}\leq 2\norm{g_A}_{\infty}\leq \frac{2}{\sqrt{\ln A}}\\
			\norm{\na \td g_A}_{2} \leq \norm{\na g_A}_{2} &+ \ln^{-\frac 18}\ln\ln\ln A
			\leq \frac{C_{\ga}}{\ln\ln A}+ \ln^{-\frac 18}\ln\ln\ln A
		}
		where $C_{\ga}$ is independent of $A$. 
		\item $\supp(\td g_A)\subset B(0,1)$.	
		\item The smooth solution $\td\om_A$ to \eqref{main.eq} for the initial data $\td g_A$ has local critical norm inflation:
		\[
		\norm{\na\td\om_A(\cdot,t_A)}_2 >\ln^{\frac 1{12}}\ln\ln\ln A.
		\]
	\end{enumerate}

\end{remark}

\section{Patching argument}\label{sec.patching}
In this section, we introduce useful lemmas and a proposition for the construction of the desired global solution from local ones. For the non-compactly supported case, our strategy is using a huge distance between local solutions so that they barely interact to each other. This leads the global solution to locally behave like local solutions. The following proposition describes this in detail. 

\begin{proposition}\label{prop.patching.noncompact}
	Let $\{\om_{j0}\}\subset C_c^{\infty}(B(0,1))$ be a sequence of functions satisfying
	\EQ{\label{initial.data.M}
	\sum_{j=1}^{\infty}(\norm{\om_{j0}}_{H^1}^2+\norm{\om_{j0}}_1) + \sup_j \norm{\om_{j0}}_{\infty}\leq M 
}
for some $M>1$. 
	For each $\ga>0$, let $C_0$ be an absolute constant such that
	\[
	\norm{\na^{\perp}\Del^{-1}T_{\ga}f}_{\infty}\leq C_0(\norm{f}_1+\norm{f}_{\infty}).
	\] 
	
	Then, we can find a sequence $\{x_j\}$ of centers with $|x_j-x_k|\gg 1$ for $j\neq k$ such that there exists a unique classical solution $\om$ to \eqref{main.eq} for the initial data  
	\[
	\om_0(x) = \sum_{j=1}^{\infty} \om_{j0}(x-x_j) \in L^1\cap L^{\infty}\cap H^1\cap C^{\infty}
	\]
	such that the following hold.
	\begin{enumerate}[(i)]
		\item For any $0\leq t\leq 1$, $\om(\cdot,t)$ is supported in the union of disjoint balls: 
		\EQ{\label{supp.om.dis}
		\supp(\om(\cdot,t))\subset \bigcup_{j=1}^{\infty}B(x_j, 3C_0M).
	}	
		\item For each $0\leq t\leq 1$, $\om(\cdot,t)\in C^{\infty}(\R^2)$, and $\om\in C([0,1];L^1(\R^2)\cap L^{\infty}(\R^2))$.
		\item For any $\ep>0$, we can find a sufficiently large integer $j_0=j_0(\ep)$ so that for $j\geq j_0$, we have
		\EQ{\label{local.behavior.prop}
		\max_{0\leq t\leq 1}\norm{(\om-\om_j)(\cdot,t)}_{H^2(B(x_j,3C_0M))} <\ep,
	}
		where a local solution $\om_j$ solves \eqref{main.eq} for the initial data
		\[
		\om_j|_{t=0} = \om_{j0}(\cdot-x_j). 
		\]
		\end{enumerate}
\end{proposition}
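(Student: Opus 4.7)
The plan is to choose the centers $\{x_j\}$ inductively very far apart so that all pairwise interactions are negligible, exploiting the decay at infinity of the kernel $K$ of $\na^\perp\Del^{-1}T_\ga$. Requiring $|x_j-x_k|\ge A_{jk}$ for some rapidly growing $A_{jk}$ to be specified, the translates $\om_{j0}(\cdot-x_j)$ have pairwise disjoint supports inside $B(x_j,1)$, so $\norm{\om_0}_1\le M$ and $\norm{\om_0}_\infty\le M$; both norms are conserved by transport along the divergence-free $u$, and the hypothesis on $C_0$ yields $\norm{u(\cdot,t)}_\infty\le 2C_0M$ on $[0,1]$. Characteristics therefore displace by at most $2C_0M$ and carry each $\supp\om_{j0}(\cdot-x_j)$ into $B(x_j,3C_0M)$, which proves (i). For existence (ii), $\om_0\in C^\infty\cap L^1\cap L^\infty\cap H^1$ is smooth and in $H^s_{\mathrm{loc}}$ for every $s$; since $T_\ga$ is a bounded Fourier multiplier, standard local-in-time well-posedness, together with the log-type bound $\norm{\na u}_\infty\lesssim 1+\norm\om_\infty\log(e+\norm\om_{H^s})$ and the conservation $\norm{\om(\cdot,t)}_\infty\le M$, extends the lifespan to $[0,1]$, with uniqueness in the Yudovich class following from boundedness of $T_\ga$ on $L^p$, $1<p<\infty$. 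The same argument produces each local solution $\om_j$.

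For the local approximation (iii), the error $\eta_j:=\om-\om_j$ solves
\EQN{
\pa_t\eta_j+u\cdot\na\eta_j=-(u-u_j)\cdot\na\om_j,\qquad \eta_j(0)=\sum_{k\ne j}\om_{k0}(\cdot-x_k),
}
with $u_j:=\na^\perp\Del^{-1}T_\ga\om_j$, and by our spacing $\eta_j(0)$ vanishes on a neighborhood of $B(x_j,10C_0M)$. The crucial input is that $|\na^m K(y)|\lesssim_m|y|^{-1-m}$ for $|y|\ge 1$, uniformly in $\ga\in(0,\tfrac12]$: the symbol of $T_\ga$ is bounded and comparable to a positive constant near $k=0$, so the large-scale behavior of $K$ matches that of the Biot--Savart kernel. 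Combined with $\supp\om_k(\cdot,t)\subset B(x_k,3C_0M)$, this gives, for $m\in\{0,1,2\}$,
\EQN{
\norm{u-u_j}_{W^{m,\infty}(B(x_j,10C_0M))}\lesssim_m\sum_{k\ne j}\frac{\norm{\om_{k0}}_1}{(|x_j-x_k|-20C_0M)^{m+1}},
}
which can be driven below any prescribed $\delta_j\downarrow 0$ by taking $A_{jk}$ large enough.

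Finally, for the $H^2$ control I would run a localized energy estimate: take a smooth cutoff $\chi_j\equiv 1$ on $B(x_j,5C_0M)$ supported in $B(x_j,10C_0M)$; the speed bound ensures forward characteristics of $u$ starting in $B(x_j,3C_0M)$ remain in $\{\chi_j=1\}$ throughout $[0,1]$. Then $\chi_j\eta_j(0)=0$ by spacing, and the cutoff commutator $(u\cdot\na\chi_j)\eta_j\equiv 0$ (since $\na\chi_j$ is supported in an annulus disjoint from $\supp\om\cup\supp\om_j$ thanks to the spacing), so $\chi_j\eta_j$ satisfies the clean transport equation with source $-\chi_j(u-u_j)\cdot\na\om_j$. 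A standard $H^2$ energy estimate yields
\EQN{
\norm{\chi_j\eta_j(\cdot,t)}_{H^2}\le \exp\bke{C\int_0^1\bke{1+\norm{\na u}_{W^{2,\infty}(B(x_j,10C_0M))}}ds}\int_0^t\norm{(u-u_j)\cdot\na\om_j(\cdot,s)}_{H^2}ds,
}
with source bounded by $\norm{u-u_j}_{W^{2,\infty}(\supp\om_j)}\norm{\om_j}_{H^3}\lesssim\delta_j\norm{\om_{j0}}_{H^3}$ after propagation of regularity. The principal obstacle is the interplay between the Gr\"onwall constant (which depends on $\norm{\om_{j0}}_{H^s}$ for some $s>3$ and may grow with $j$) and the smallness $\delta_j$: both are $j$-dependent, and the inductive construction must choose each $A_{jk}$ only after the relevant local norms $\norm{\om_{\ell 0}}_{H^s}$, $\ell\le\max(j,k)$, have been computed, so that $\delta_j$ beats the corresponding Gr\"onwall exponential. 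This forces $\norm{\eta_j(\cdot,t)}_{H^2(B(x_j,3C_0M))}\to 0$ as $j\to\infty$, proving (iii).
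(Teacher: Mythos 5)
Your overall strategy (centers pushed far apart inductively, kernel decay $|\pa^\al H(y)|\lesssim |y|^{-1-|\al|}$, finite-speed support control, localized energy estimates, and choosing the separations only after the relevant local higher norms are known) is the same as the paper's. However, there is a genuine gap at the heart of your step (iii). You estimate
\[
\norm{u-u_j}_{W^{m,\infty}(B(x_j,10C_0M))}\lesssim_m\sum_{k\ne j}\frac{\norm{\om_{k0}}_1}{(|x_j-x_k|-20C_0M)^{m+1}},
\]
but $u-u_j=\na^{\perp}\Del^{-1}T_{\ga}(\om-\om_j)$, and $\om-\om_j$ is \emph{not} supported away from $B(x_j,3C_0M)$: its restriction to $B(x_j,3C_0M)$ is precisely the error $\eta_j$ you are trying to prove small. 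The displayed inequality therefore omits the near-field contribution $\na^{\perp}\Del^{-1}T_{\ga}\bigl((\om-\om_j)\mathbf{1}_{B(x_j,3C_0M)}\bigr)$, which is not controlled by the $L^1$ norms of the other bumps, and your source bound $\lesssim \delta_j\norm{\om_{j0}}_{H^3}$ in the Gr\"onwall step is consequently unjustified as written. The argument can be repaired, but it requires either splitting $u-u_j$ into near and far parts and absorbing the near part into the Gr\"onwall factor, or — as the paper does — first decomposing $\om=\om_f+\om_g$ with \emph{both} pieces passively transported by the full velocity $u$ (Lemma \ref{simple.patching}). Then $\om_f$ and $\om_g$ retain exactly disjoint, well-separated supports, the comparison $\eta=\om_f-\td\om$ starts from zero initial data, and the forcing in its equation involves only $\na^{\perp}\Del^{-1}T_{\ga}\om_g$ evaluated on $\supp(\om_f)$, which genuinely is $O(R^{-1}\norm{g}_1)$ by the kernel decay; the remaining term $\na^{\perp}\Del^{-1}T_{\ga}\eta\cdot\na\om_f$ is absorbed by Gr\"onwall.

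A second, smaller issue: you invoke "standard local well-posedness'' for the initial data $\om_0=\sum_j\om_{j0}(\cdot-x_j)$, but this function is only in $H^1\cap L^1\cap L^\infty$ globally (the local pieces have unbounded higher Sobolev norms by design), so the $H^s$ theory does not apply directly, and a Yudovich-type existence/uniqueness plus propagation of local smoothness would need to be supplied. The paper avoids this by constructing $\om$ as the limit of the finite patchings $\om_{\le n}$ — showing the sequence is Cauchy in $C([0,1];H^2(B(x_j,3C_0M)))$ and uniformly bounded in higher local $H^k$, then verifying the limit solves the integral form of \eqref{main.eq} and deducing uniqueness of classical solutions from the same patching lemma. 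You should either adopt that limiting construction or make the Yudovich-class argument precise; combined with the near-field correction above, your proof would then close.
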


Before we prove this proposition, we consider some preliminary lemmas. 
\begin{lemma}\label{simple.patching}
	Suppose that $f\in H^k\cap L^1$ for some $k\geq 2$ and $g\in H^2\cap L^1$ satisfy
	\begin{gather}
	\norm{f}_1 + \norm{g}_1 + \sup(\norm{f}_{\infty},\norm{g}_{\infty}) \leq M <\infty, \nonumber\\
	\dist(\supp(f), \supp(g)) \geq 100 C_0M >0 \label{dist.fg}
	\end{gather}
for some constant $M>1$,	and the Lebesgue measure of the support of $f$ is bounded by some positive constant $M_1$.
	
	Then, the solution $\om$ to 
	\[
	\begin{cases}
	\pa_t\om + u\cdot \na\om =0 &\R^2 \times (0,1]\\
	u= \na^{\perp}\Del^{-1}T_{\ga}\om\\
	\om|_{t=0} = f+g
	\end{cases}
	\]
	has the following properties.
	\begin{enumerate}[(i)]
		\item The solution $\om$ can be decomposed as 
		$\om = \om_f+\om_g$ such that
		\begin{align}
		\om_f|_{t=0} = f&, \quad \om_g|_{t=0}=g \nonumber\\
		\supp(\om_f(\cdot,t))&\subset B(\supp(f), 2C_0M), \label{condition.decomp.f}\\
		\supp(\om_g(\cdot,t))&\subset B(\supp(g), 2C_0M), \label{condition.decomp.g}\\
		\dist(\supp(\om_f(\cdot,t)), &\supp(\om_g(\cdot,t))) \geq 90 C_0M, \quad\forall 0\leq t\leq 1, \label{condition.decomp.fg}
		\end{align}
		where $C_0$ is defined as in Proposition \ref{prop.patching.noncompact}.
		\item The Sobolev norms of $\om_f$ can be estimated by
		\EQ{\label{bdd.Hk}
		\max_{0\leq t\leq 1} \norm{\om_f(\cdot,t)}_{H^k} \leq C
	}
	for some constant $C=C(\norm{f}_{H^k},k,M, M_1)$ independent of $\norm{g}_{H^k}$. 
	\end{enumerate}
\end{lemma}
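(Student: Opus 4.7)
The plan is to build $\om_f$ and $\om_g$ as pure transports along the flow of the \emph{full} velocity $u$, and then to exploit the geometric separation between $\supp f$ and $\supp g$ to close a localized $H^k$ energy estimate for $\om_f$ with constants that do not see $\norm{g}_{H^k}$.

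\emph{Step 1 (flow-based decomposition).} Let $\phi(\cdot,t)$ denote the Lagrangian flow of the full velocity $u=\na^{\perp}\De^{-1}T_\ga \om$, and set $\om_f(x,t) := f(\phi^{-1}(x,t))$ and $\om_g(x,t):=g(\phi^{-1}(x,t))$. Both solve $\pa_t h + u\cdot\na h = 0$, so by linearity of the transport operator $\om_f + \om_g = \om$. Since $\phi(\cdot,t)$ is volume-preserving, $|\supp \om_f(\cdot,t)| = |\supp f| \le M_1$ and $\norm{\om_f(\cdot,t)}_p = \norm{f}_p$ for every $p\in [1,\infty]$, and similarly for $\om_g$.

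\emph{Step 2 (support and separation).} The initial disjointness of $\supp f$ and $\supp g$ plus the transport structure give $\norm{\om(\cdot,t)}_1 \le M$ and $\norm{\om(\cdot,t)}_\infty \le M$, so the hypothesis on $C_0$ yields $\norm{u(\cdot,t)}_\infty \le 2C_0 M$. Hence $|\phi(x,t)-x| \le 2C_0 M$ on $[0,1]$, which directly gives \eqref{condition.decomp.f}--\eqref{condition.decomp.g} and the separation \eqref{condition.decomp.fg}: the displacement-induced loss of distance is at most $4C_0 M$, while the initial separation in \eqref{dist.fg} is $100 C_0 M$.

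\emph{Step 3 (smoothness of the far-field part).} Split $u = u_f + u_g$ with $u_h := \na^{\perp}\De^{-1}T_\ga h$. The kernel of $D^{\ell}\,\na^{\perp}\De^{-1}T_\ga$ decays pointwise at least like $|x|^{-\ell-1}$ away from the origin (the logarithmic regularization only improves decay). Since $\supp \om_g(\cdot,t)$ sits at distance $\ge 90 C_0 M$ from $\supp \om_f(\cdot,t)$, one gets
\[
\norm{D^{\ell} u_g}_{L^\infty(B(\supp \om_f(\cdot,t),\,C_0 M))} \le C(\ell,M,C_0)
\]
for every $\ell$, uniformly in $t\in[0,1]$ and independently of $\norm{g}_{H^k}$.

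\emph{Step 4 (localized $H^k$ energy estimate).} Fix a cutoff $\chi\in C_c^\infty(\R^2)$ that equals $1$ on a neighborhood of $\bigcup_{t\in[0,1]}\supp\om_f(\cdot,t)$ and vanishes near $\bigcup_{t\in[0,1]}\supp\om_g(\cdot,t)$ (Step 2 makes this possible). Because $\om_f$ lives where $\chi\equiv 1$, replacing $u$ by $\chi u$ does not change the transport equation on $\supp \om_f$. Applying $\pa^{\al}$ with $|\al|\le k$ and using a standard Kato--Ponce commutator estimate yields
\[
\ddt \norm{\om_f}_{H^k}^2
\lec \norm{\na(\chi u)}_\infty \norm{\om_f}_{H^k}^2
+ \norm{\chi u}_{H^k} \norm{\na\om_f}_\infty \norm{\om_f}_{H^k}.
\]
By Step 3, the $\chi u_g$ contributions to both $\norm{\na(\chi u)}_\infty$ and $\norm{\chi u}_{H^k}$ are controlled by $C(k,M,C_0)$. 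The $\chi u_f$ piece of $\norm{\chi u}_{H^k}$ is controlled by $\norm{\om_f}_{H^{k-1}} + C(M,M_1)$ via the bounded-measure support of $\om_f$ and the mapping properties of $\chi\,\na^\perp\De^{-1}T_\ga$. The standard log-type bound for the regularized Biot--Savart operator gives $\norm{\na u_f}_\infty \lec 1+\norm{\om_f}_\infty \log(e+\norm{\om_f}_{H^k})$, and Sobolev embedding on the compactly supported $\om_f$ controls $\norm{\na\om_f}_\infty$ by a slightly stronger Sobolev norm. The usual log-Grönwall argument on $[0,1]$ then closes the bootstrap and produces a constant of the desired form $C(\norm{f}_{H^k},k,M,M_1)$.

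\emph{Main obstacle.} The principal difficulty is removing any dependence on $\norm{g}_{H^k}$ in the final constant. This is precisely what localization together with the kernel-decay bound in Step 3 achieves: every ``external'' contribution of $\om_g$ to the energy estimate is absorbed into a constant depending only on $\norm{g}_{L^1\cap L^\infty}\le M$, so the large initial separation in \eqref{dist.fg} — which remains at least $90 C_0 M$ throughout $[0,1]$ by Step 2 — is exactly the resource that decouples the $\om_f$-side of the estimate from the size of $g$ in $H^k$.
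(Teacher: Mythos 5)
Your proposal follows essentially the same route as the paper: transport $f$ and $g$ separately along the full flow, use the $L^\infty$ bound on $u$ to preserve the support separation, bound all derivatives of $u_g$ on $\supp(\om_f(\cdot,t))$ via the kernel decay of $\na^\perp\De^{-1}T_\ga$, and close a commutator-based $H^k$ energy estimate with a log-Gr\"onwall argument. The only point to tighten is the control of $\norm{\na\om_f}_\infty$ when $k=2$ (where $H^2\not\hookrightarrow W^{1,\infty}$ in $\R^2$): as the paper does, first run a separate $W^{1,p}$ estimate for $2<p\le\infty$ to obtain an a priori bound on $\norm{\na\om_f}_\infty$, and only then perform the $H^k$ step.
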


\begin{proof}
	Define $\om_f$ and $\om_g$ by the solutions to
	\EQ{\label{eqn.omf}
	\begin{cases}
	\pa_t \om_f + u\cdot \na \om_f =0 \\
	\om_f|_{t=0} = f
	\end{cases}
}
	and
	\EQ{\label{eqn.omg}
	\begin{cases}
	\pa_t \om_g + u\cdot \na \om_g =0 \\
	\om_g|_{t=0} = g.
	\end{cases}
	}
	Let $\phi$ be the characteristic line which solves
	\[
	\begin{cases}
	\pa_t\phi(x,t) = u(\phi(x,t),t)\\
	\phi(x,0) =x.
	\end{cases}
	\]
	Then, the equations \eqref{eqn.omf} and \eqref{eqn.omg} can be written as 
	\[
	\om_f(\phi(x,t),t)=f(x),\quad\text{and}\quad \om_g(\phi(x,t),t)=g(x).
	\]
	From these forms, it follows that for $1\leq p \leq \infty$ 
	\[
	\norm{\om_f(\cdot,t)}_p = \norm{f}_p, \quad\text{and}\quad
	\norm{\om_g(\cdot,t)}_p = \norm{g}_p, \quad
	\forall 0\leq t\leq 1,
	\]
 	and 
	\[
	\max_{0\leq t\leq 1}\norm{u(\cdot,t)}_{\infty} \leq C_0M.
	\]

Since we have
	\[
	|\phi(x,t)-x| \leq \int_0^t |\pa_s \phi(x,s)| ds \leq \max_{0\leq s\leq 1}\norm{u(\cdot,s)}_{\infty}t\leq C_0Mt,
	\]
\eqref{condition.decomp.f} and \eqref{condition.decomp.g} easily follows from
	\EQN{
	&\supp(\om_f(\cdot,t))\subset \phi(\supp(f), t)\subset B(\supp(f), 2C_0M), \\
	&\supp(\om_g(\cdot,t))\subset \phi(\supp(g), t)\subset B(\supp(g), 2C_0M),
	\quad \forall0\leq t\leq 1.
	}

		Using the assumption \eqref{dist.fg} additionally, the triangle inequality implies 
	\EQ{\label{dist.om.fg}
	\dist(\supp(\om_f(\cdot,t)), \supp(\om_g(\cdot,t))) \geq 90 C_0M, \quad\forall 0\leq t\leq 1. 
}
In other words, \eqref{condition.decomp.fg} is obtained. 
	
	To control the Sobolev norm of $\om_f$, we first estimate $\na^{\perp}\Del^{-1}T_{\ga}\om_g$ when $0\leq t\leq 1$ and $x\in \supp(\om_f(\cdot,t))$. 
	Since the supports of $\om_f(\cdot,t)$ and $\om_g(\cdot,t)$ are apart from each other for $0\leq t\leq 1$ (see \eqref{dist.om.fg}), we have for $0\leq t\leq 1$ and $x\in \supp(\om_f(\cdot,t))$, 
	\EQ{\label{remainder.velocity}
	\left|\pa^{\al}\na^{\perp}\Del^{-1}T_{\ga}\om_g(x,t)\right|
	&=\left|\int_{|y-x|\geq 90C_0M} \pa^{\al}H(x-y)\om_g(y) dy\right|\\
	&\leq \norm{\pa^{\al}H}_{L^{\infty}(|z|\geq 90C_0M)} \norm{g}_1,
	}
	where $H$ is the kernel of the Fourier multiplier $\na^{\perp}\Del^{-1}T_{\ga}$. By Lemma \ref{ker.H.lem}, for any multi-index $\al$ with $|\al|\ge 0$, $H$ satisfies 
	\[
	|\pa^{\al}H(z)| \lesssim_{\al,\ga} \frac1{|z|^{|\al|+1}}, \quad\forall z\neq 0
	\]  
	and therefore  
	\EQ{\label{est.ug.lemma1}
	\max_{0\leq t\leq 1}
	\max_{x\in\supp(\om_f(\cdot,t))}|\pa^{\al}\na^{\perp}\Del^{-1}T_{\ga}\om_g(x,t)|
	\lesssim_{\al,\ga} 1.
}
	
	To get \eqref{bdd.Hk}, we use the energy method. We consider the Sobolev norm $W^{1,p}(\R^2)$ for $2<p\leq +\infty$ first. From the equation \eqref{eqn.omf} for $\om_f$, we have 
	\EQ{\label{energy.est.p}
	\frac 1p\ddt \norm{\na\om_f}_p^p
	\leq (\norm{D\na^{\perp}\Del^{-1}T_{\ga}\om_f}_{\infty}
	+\norm{D\na^{\perp}\Del^{-1}T_{\ga}\om_g}_{L^\infty(\supp(\om_f(\cdot,t)))})\norm{\na\om_f}_p^p
}
	By log-type interpolation inequality together with $L^p$-norm preservation of $\om_f$, 
	\[
	\norm{D\na^{\perp}\Del^{-1}T_{\ga}\om_f(\cdot,t)}_{\infty}\lesssim_{p} 1+\norm{f}_{\infty}\log(10 + \norm{f}_2 + \norm{\na\om_f(\cdot,t)}_p^p), 
	\quad\forall 0\leq t\leq 1. 
	\]	
	Combining with \eqref{est.ug.lemma1} and \eqref{energy.est.p}, this implies
	\[
	\max_{0\leq t\leq 1}\norm{\om_f(\cdot,t)}_{W^{1,p}(\R^2)}\leq C(\norm{f}_{W^{1,p}(\R^2)}, p,M). 
	\]
	
	We now estimate in $H^k(\R^2)$, $k\geq 2$. By the commutator estimate in \cite[Theorem 1.9]{Li16}, for $J=(1-\Del)^{\frac 12}$, we get
	\EQN{
	\ddt \norm{J^k \om_f}_2
	\leq& \ \norm{[J^k,\na^{\perp}\Del^{-1}T_{\ga}\om_f\cdot \na]\om_f}_2
	+\norm{[J^k,\na^{\perp}\Del^{-1}T_{\ga}\om_g\cdot \na]\om_f}_2\\
	\lesssim& \ \norm{J^{k-1}D\na^{\perp}\Del^{-1}T_{\ga}\om_f}_{3}\norm{\na\om_f}_{6}
	+\norm{D\na^{\perp}\Del^{-1}T_{\ga}\om_f}_{\infty}\norm{J^k\om_f}_2\\
	&+\max_{|\al|\leq k}\max_{0\leq t\leq 1} \norm{D^{\al}\na^{\perp}\Del^{-1}T_{\ga}\om_g(\cdot,t)}_{L^{\infty}(\supp(\om_f(\cdot,t)))}\norm{J^k\om_f}_2	\\
	\leq& \ C\norm{J^k\om_f}_2, 
	}
where the constant in the last inequality depends on $\norm{f}_{H^2}$, $M$, $M_1$, and $k$. 

	Therefore, by Gr\"{o}nwall inequality, we obtain \eqref{bdd.Hk}.
		
\end{proof}
 
\begin{lemma} \label{simple.patching.2}
	Suppose that $f$ is in $H^3(\R^2)\cap L^1(\R^2)$ with $\Leb(\supp(f))\leq M_1$ for some $M_1$,  $g$ is in $H^2(\R^2)\cap L^1(\R^2)$, and they satisfy
	\EQN{
	\norm{f}_1 + \norm{g}_1 + \sup(\norm{f}_{\infty},\norm{g}_{\infty})\leq M
	}
	for some $M>1$.
	Let $\om$ and $\td\om$ be solutions to \eqref{main.eq} for the initial data $f+g$ and $f$, respectively.
	
	Then, for each $\ep>0$, we can find sufficiently large $R=R(\ep, \norm{f}_{H^3}, M,M_1)>0$ such that if 
	\EQ{\label{dist.ass}
	\dist(\supp(f),\supp(g))\geq R,
}
	then $\om$ can be decomposed as $\om = \om_f + \om_g$ such that $\om_f$ and $\om_g$ satisfy \eqref{condition.decomp.f}-\eqref{condition.decomp.fg} and
	\EQ{\label{local.behavior}
		\max_{0\leq t\leq 1}\norm{(\om_f-\td\om)(\cdot,t)}_{H^2}<\ep.
	}	
		
	\end{lemma}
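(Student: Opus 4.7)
The plan is to show that $\eta := \om_f - \td\om$ solves a transport equation whose forcing is $O(R^{-1})$ in $H^2$ and which starts from zero, then close an $H^2$ energy estimate via Gr\"onwall's inequality. First I would apply Lemma \ref{simple.patching} with $k=3$, which is legitimate since $f\in H^3$ and $R$ will be chosen $\ge 100 C_0 M$. This produces the decomposition $\om = \om_f + \om_g$ satisfying \eqref{condition.decomp.f}--\eqref{condition.decomp.fg} and the uniform bound $\max_{0\le t\le 1}\norm{\om_f(\cdot,t)}_{H^3}\le C(\norm{f}_{H^3}, M, M_1)$. Running the same argument with $g\equiv 0$ gives the analogous facts for $\td\om$: $\supp(\td\om(\cdot,t))\subset B(\supp(f), C_0 M)$ for $0\le t\le 1$ and $\max_{0\le t\le 1}\norm{\td\om(\cdot,t)}_{H^3} \le C(\norm{f}_{H^3}, M, M_1)$.

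Write $u := \na^\perp \De^{-1} T_\ga \om = u_f + u_g$ and $\td u := \na^\perp \De^{-1} T_\ga \td\om$, so that $u_f - \td u = \na^\perp \De^{-1} T_\ga \eta$. Subtracting the equations for $\om_f$ and $\td\om$, I get $\eta|_{t=0}=0$ and
\[
\pa_t \eta + u \cdot \na \eta + (\na^\perp \De^{-1} T_\ga \eta)\cdot \na \td\om \;=\; -\,u_g \cdot \na \td\om.
\]
The right-hand side is small because $\supp(\om_g(\cdot,t))$ and $\supp(\td\om(\cdot,t))$ are separated by at least $R-4C_0 M$; the pointwise decay $|\pa^\al H(z)| \lesssim_{\al,\ga} |z|^{-|\al|-1}$ of the kernel $H$ of $\na^\perp \De^{-1} T_\ga$, already exploited in \eqref{remainder.velocity}, yields
\[
\max_{0\le t\le 1}\norm{\pa^\al u_g(\cdot,t)}_{L^\infty(\supp(\td\om(\cdot,t)))} \lesssim R^{-|\al|-1}\norm{g}_1, \qquad 0\le|\al|\le 2,
\]
whence $\max_{0\le t\le 1}\norm{u_g\cdot \na\td\om(\cdot,t)}_{H^2}\lesssim R^{-1}$, with an implicit constant depending only on $\norm{f}_{H^3}, M, M_1$.

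The remainder is a standard $H^2$ energy estimate carried out exactly as in the proof of Lemma \ref{simple.patching}: apply $J^k = (1-\De)^{k/2}$ with $k=2$ to the equation for $\eta$, use the commutator bound of \cite[Theorem 1.9]{Li16}, and control $\norm{\na u}_\infty$ by the log-type interpolation inequality together with the already-established $H^3$ bounds. The non-local feedback $(\na^\perp\De^{-1}T_\ga\eta)\cdot\na\td\om$ is absorbed into a term of the form $C\norm{\eta}_{H^2}^2$ by combining the $H^3$ bound on $\td\om$, the order $-1$ smoothing of $\na^\perp \De^{-1} T_\ga$, the two-dimensional embedding $H^2 \hookrightarrow L^\infty$, and Hardy--Littlewood--Sobolev applied with the compact support of $\eta$ inside $B(\supp(f), 2C_0 M)$. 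The resulting differential inequality $\frac{d}{dt}\norm{\eta}_{H^2}^2 \le C\norm{\eta}_{H^2}^2 + C R^{-2}$ together with $\eta(0)=0$ and Gr\"onwall gives $\max_{0\le t\le 1}\norm{\eta(\cdot,t)}_{H^2}\le C(\norm{f}_{H^3},M,M_1)\,R^{-1}$, which is smaller than $\ep$ once $R$ is chosen sufficiently large.

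The main obstacle will be handling $(\na^\perp\De^{-1}T_\ga\eta)\cdot \na\td\om$ in the $H^2$ estimate with constants independent of $\norm{g}_{H^k}$; unlike the local transport terms, integration by parts is not directly available, and one must carefully combine the smoothing of $\na^\perp \De^{-1} T_\ga$ with the compact support of $\eta$ and the $H^3$ control of $\td\om$. This is the same kind of bookkeeping that made the constant in \eqref{bdd.Hk} independent of $\norm{g}_{H^k}$ in Lemma \ref{simple.patching}, and is exactly what allows $R$ to be taken depending only on $\ep, \norm{f}_{H^3}, M$, and $M_1$.
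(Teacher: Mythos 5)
Your proposal is correct in outline and shares the paper's skeleton (reuse the decomposition of Lemma \ref{simple.patching}, write the equation for $\eta=\om_f-\td\om$, exploit the support separation and the kernel decay $|\pa^\al H(z)|\lesssim |z|^{-|\al|-1}$ to make the cross term $O(R^{-1})$, then Gr\"onwall), but it diverges at the level where the energy estimate is performed. You close the estimate directly in $H^2$, which forces you to confront commutator and product estimates for the transport term $u\cdot\na\eta$ and for the non-local feedback $(\na^\perp\De^{-1}T_\ga\eta)\cdot\na\td\om$ at two derivatives --- exactly the ``main obstacle'' you flag. This is doable (note that $J^2=1-\De$ is a local differential operator, so the commutator $[J^2,u\cdot\na]\eta$ only sees $u$ on $\supp(\eta)$, where $u_g$ and its derivatives are uniformly controlled by \eqref{est.ug.lemma1}), and it yields the stronger rate $R^{-1}$. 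The paper instead performs the energy estimate only in $L^2$, where the transport term dies by integration by parts ($u$ divergence-free) and no commutators are needed, obtaining $\max_t\norm{\eta}_2\lesssim R^{-1}$; it then upgrades to $H^2$ by Gagliardo--Nirenberg interpolation against the $H^3$ bounds \eqref{bdd.H3.omf}--\eqref{bdd.H3.tdom}, settling for the weaker but equally sufficient rate $R^{-1/3}$. Your splitting of the $\eta$-equation ($u\cdot\na\eta$ plus feedback on $\na\td\om$, versus the paper's $\td u\cdot\na\eta$ plus feedback on $\na\om_f$) is an algebraically equivalent and equally valid choice. In short: your route buys a sharper power of $R$ at the price of heavier $H^2$-level bookkeeping; the paper's route is more economical because the hard quantitative work is done at the lowest regularity and the $H^3$ a priori bounds do the rest.
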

\begin{remark} 	Similar to \eqref{condition.decomp.f} and \eqref{condition.decomp.g}, we have 
		\EQ{\label{supp.tdom}
		\supp(\td\om(\cdot,t)) \subset B(\supp(f), 2C_0M), 
		\quad\forall 0\le t\le 1,
		}
	where $C_0$ is defined as in Proposition \ref{prop.patching.noncompact}. It follows from $\max_{0\leq t\leq 1}\norm{\td u(\cdot,t)}_{\infty}\leq C_0M$ for $\td u = \na^{\perp}\Del^{-1}T_{\ga}\td\om$. 
\end{remark}		
	
	\begin{proof}
	We use the same decomposition $\om=\om_f+\om_g$ in Lemma \ref{simple.patching}. Then, we have \eqref{condition.decomp.f} and \eqref{condition.decomp.g}. Furthermore, \eqref{condition.decomp.fg} is also obtained, provided that $R\geq 100C_0M$. In fact, using  \eqref{dist.ass}, we have 
	\EQ{\label{dist.supp.omfg}
	\dist(\supp(\om_f(\cdot,t),\supp(\om_g(\cdot,t))))\geq R-10C_0 M\geq \frac 12 R, \quad \forall 0\leq t\le 1
}
for sufficiently large $R$.

	To get \eqref{local.behavior}, we recall the equation for $\om_f$, 
	\[
	\begin{cases}
	\pa_t \om_f + u \cdot \na \om_f =0\\
	\om_f |_{t=0} = f.
	\end{cases}
	\]	

By Gagliardo-Nirenberg inequality, 
	\EQ{\label{est.H2}
		\norm{(\om_f-\td\om)(\cdot,t)}_{H^2}
		&\lesssim (\norm{\om_f(\cdot,t)}_{H^3}+\norm{\td\om(\cdot,t)}_{H^3})^{\frac 23} \norm{(\om_f-\td\om)(\cdot,t)}_{2}^{\frac 13}.
	}
By Lemma \ref{simple.patching}, we obtained
	\EQ{\label{bdd.H3.omf}
	\max_{0\leq t\leq 1}\norm{\om_f(\cdot,t)}_{H^3}\leq C(\norm{f}_{H^3}, M, M_1).
	}
Also, by the usual energy method, we also have a similar inequality for $\td\om$
	\EQ{\label{bdd.H3.tdom}
	\max_{0\leq t\leq 1}\norm{\td\om(\cdot,t)}_{H^3}\leq C(\norm{f}_{H^3}, M, M_1).
}
	Therefore, it is enough to consider $\norm{\eta(\cdot,t)}_2$ for $\eta=\om_f-\td\om$. 
	
	The equation for $\eta$ is 
	\[
	\begin{cases}
	\pa_t\eta + \na^{\perp}\Del^{-1}T_{\ga}\td\om\cdot\na\eta + \na^{\perp}\Del^{-1}T_{\ga}\eta \cdot\na \om_f + \na^{\perp}\Del^{-1}T_{\ga}\om_g \cdot \na\om_f=0\\
	\eta|_{t=0}=0.
	\end{cases}
	\]
	Taking $\int\cdot\eta dx$ on both side of the first equation and using \eqref{bdd.H3.omf}, we get
	\EQN{
	\ddt\norm{\eta(\cdot,t)}_2
	&\leq \norm{\na^{\perp}\Del^{-1}T_{\ga}\eta \cdot\na \om_f}_2
	+\norm{\na^{\perp}\Del^{-1}T_{\ga}\om_g \cdot \na\om_f}_2\\
	&\lesssim_{M_1} \norm{\eta}_2\norm{\na \om_f}_6 + \norm{\na^{\perp}\Del^{-1}T_{\ga}\om_g}_{L^{\infty}(\supp(\om_f(\cdot,t)))}\norm{\na \om_f}_2\\
	&\leq C(\norm{\eta}_2+ \norm{\na^{\perp}\Del^{-1}T_{\ga}\om_g}_{L^{\infty}(\supp(\om_f(\cdot,t)))}),
	}
 	for some positive constant $C$ depending on $\norm{f}_{H^3}$,  $M$, and $M_1$. Then by Gr\"{o}nwall inequality, we have
	\EQ{\label{est.eta}
	\max_{0\leq t\leq 1}\norm{\eta(\cdot,t)}_2 \leq C(\norm{f}_{H^3}, M, M_1)
	\max_{0\leq t\leq 1}
	\norm{\na^{\perp}\Del^{-1}T_{\ga}\om_g}_{L^{\infty}(\supp(\om_f(\cdot,t)))}.
}
	
	 Using Lemma \ref{ker.H.lem} and \eqref{dist.supp.omfg}, we have for any $0\leq t\leq 1$ and $x\in \supp(\om_f(\cdot,t))$,
	\EQ{\label{est.ug}
	|\na^{\perp}\Del^{-1}T_{\ga}\om_g(x,t)|&=|H\ast\om_g(x,t)|\\
	&\lesssim \int_{|x-y|\geq \frac 12 R} \frac 1{|x-y|} |\om_g(y,t)| dy
	\lesssim R^{-1}\norm{g}_1\leq MR^{-1}. 
	}
	Finally, combining \eqref{est.H2}-\eqref{est.ug}, we can find $R=R(\ep,\norm{f}_{H^3}, M,M_1)>100C_0M$ sufficiently large such that
	\[
	\max_{0\leq t\leq 1}\norm{(\om_f-\td\om)(\cdot,t)}_{H^2}
	\leq C(\norm{f}_{H^3}, M,M_1)R^{-\frac 13}<\ep.
	\]	
	\end{proof}

	Now we are ready to prove the proposition.
	
	\noindent\textit{Proof of Proposition \ref{prop.patching.noncompact}.}
	Let $\om_{\leq n}$, $n\in \N$, be a smooth solution to
	\EQ{\label{defn.wn}
	\begin{cases}
	\pa_t \om_{\le n} +\na^{\perp}\Del^{-1}\om_{\le n}\cdot \na\om_{\le n} =0,\\
	\om_{\le n}|_{t=0}= \sum_{k=1}^n \om_{k0}(x-x_{k}).
	\end{cases}.
	}
	Our strategy is to construct a sequence $\{x_k\}_{k\in \N}$ of centers such that the following hold.
	\begin{enumerate}[(i)]
		\item\label{i} For each $j\in \N$, $\{\om_{\leq n}\}$ is Cauchy in $C([0,1];H^2(B(x_j,3C_0M)))$. 
		\item \label{ii} For any $n\in \N$,
		\[\label{disj.supp.wn}
		\supp(\om_{\leq n}(\cdot,t))\subset \bigcup_{j=1}^{\infty}B(x_j, 3C_0M).
		\]
		\item\label{iii} For any $n\in \N$ and $1\leq j\leq n$,
		\[
		\max_{0\leq t\leq 1}\norm{(\om_{\leq n}-\om_j)(\cdot,t)}_{H^2(B(x_j,3C_0M))} <\frac 1{2^{j+1}}.
		\]
	\end{enumerate}
	Then, the limit solution of $\{\om_{\leq n}\}$ becomes the desired one $\om$. 
	\bigskip
	
	\noindent\texttt{Step 1} Construction of the sequence $\{x_k\}_{k \in \N}$.
	 
	For each $j\in \N$, apply Lemma \ref{simple.patching.2} for $f=\om_{j0}$ and $\ep=\frac 1{2^{j+1}}$. Then, we can find $R_j>0$ such that for any $h\in H^2\cap L^1$ with
	\EQ{\label{h.ini}
	\norm{\om_{j0}}_1 &+\norm{h}_1 +\sup(\norm{\om_{j0}}_{\infty},\norm{h}_{\infty}) \leq M,\\
	&\dist(\supp(\om_{j0}), \supp(h))\geq R_j, 
	}
	where $M$ is given in \eqref{initial.data.M}, the solutions $\om$ and $\td\om_j$ to \eqref{main.eq} for the initial data $\om_{j0}+h$ and $\om_{j0}$, respectively, satisfy
	\EQ{\label{h.conclusion}
	\max_{0\leq t\leq 1}\norm{(\om-\td\om_j)(\cdot,t)}_{H^2(B(0,3C_0M))}<\frac 1{2^{j+1}},
	}
	and
	\EQ{\label{supp.h}
	\supp(\om(\cdot,t))\subset B(0,3C_0M) \cup B(\supp(h), 2C_0M).	
}
Here, \eqref{supp.h} is an easy consequence of \eqref{condition.decomp.f} and \eqref{condition.decomp.g}. 
	
	We find $\{x_n\}$ inductively. Indeed, we can relax the conditions on $\{x_n\}$ as follows; for any $n\geq 2$ in $\N$ with $x_1=0$, 
\begin{enumerate}[(a)]
		\item\label{choice.x1.claim} 
		$x_n$ is located at a far distance from previously chosen points
		\EQN{
			|x_n-x_l|>\sum_{i=1}^n R_i + 10C_0M + 2^n, 	\quad\forall 1\leq j< n,
		}	
		\item\label{choice.x2.claim} 
		A smooth solution $\om_{\leq n}$ to \eqref{defn.wn} satisfies 
		\EQN{
			\supp(\om_{\leq n}(\cdot,t))\subset \bigcup_{j=1}^n B(x_j,3C_0M), \quad \forall 0\leq t\leq 1.
		}
		\item\label{choice.x3.claim}
		Denoting $B(x_j, 3C_0M)$ by $B_j$, 	
		\EQN{
			\max_{0\leq t\leq 1}\norm{(\om_{\leq n}-\om_{\leq n-1})(\cdot,t)}_{H^2(\bigcup_{j=1}^{n-1} B_j)}<\frac 1{2^n}. 
		}	
\end{enumerate}
	
Then, the requirements \eqref{i} and $\eqref{ii}$ easily follow from \eqref{choice.x3.claim} and \eqref{choice.x2.claim}, respectively. We can also check that \eqref{choice.x1.claim} implies \eqref{iii}. For each $n\in \N$ and $1\leq j\leq n$, plug 
	\EQ{\label{given.h}
	h(x)=\sum_{\substack{k=1\\k\neq j}}^{n}\om_{k0}(x-x_k+x_j)
}
	into \eqref{h.ini}. We can easily see that \eqref{h.ini} holds true
	\EQN{
		\norm{\om_{j0}}_1 +\norm{h}_1 +\sup(\norm{\om_{j0}}_{\infty},\norm{h}_{\infty}) 
		\leq \sum_{k=1}^n\norm{\om_{k0}}_1 +\sup_{1\leq k\leq n}\norm{\om_{k0}}_{\infty} 
		\leq M, 
	}	
	and
	\EQN{
		\dist(\supp(\om_{j0}), \supp(h))
		&=\dist(\supp(\om_{j0}(\cdot-x_j)), \supp(h(\cdot-x_j)))\\
		&\geq \inf_{\substack{1\leq k\leq n\\k\neq j}}\dist(B(x_j,1), B(x_k,1))\\
		&\geq \inf_{\substack{1\leq k\leq n\\k\neq j}}
		|x_j-x_k|-2
		\geq R_j.
	}
	Therefore, using the translation invariant property of \eqref{main.eq}, we have
	\[
	\max_{0\leq t\leq 1}\norm{\om_{\leq n}(\cdot+x_j,t)-\td\om_j(\cdot,t)}_{H^2(B(0,3C_0M))}
	<\frac 1{2^{j+1}},
	\]
	which follows \eqref{iii}.

	Now, we choose $\{x_j\}$ satisfying \eqref{choice.x1.claim}-\eqref{choice.x3.claim} by induction. At the end of each inductive step, we also find $\td R_n\geq \td R_{n-1}$ satisfying the following condition
	\begin{enumerate}[(a)]\setcounter{enumi}{3}
		\item \label{condi.tdR}
		For any $g\in H^2\cap L^1$ with
		\EQ{\label{condition.for.g}
			&\norm{\sum_{j=1}^n\om_{j0}(\cdot-x_j)}_1 +\norm{g}_1 +\sup\left(\norm{\sum_{j=1}^n \om_{j0}(\cdot-x_j)}_{\infty},\norm{g}_{\infty}\right) \leq M,\\
			&\dist\left(\supp\left(\sum_{j=1}^n \om_{j0}(\cdot-x_j)\right), \supp(g)\right)\geq \td R_{n}, 
		}
		the solution $\om$ to \eqref{main.eq} for the initial data $\sum_{j=1}^n \om_{j0}(x-x_j)+g$ satisfies
		\[
		\supp(\om(\cdot,t)) \subset \left(\bigcup_{j=1}^n B_j\right) \bigcup B(\supp(g),2C_0M),
		\quad\forall 0\leq t\leq 1
		\]
		and
\EQ{\label{conv.H2}
		\max_{0\leq t\leq 1}\norm{(\om-\om_{\leq n})(\cdot,t)}_{H^2(\bigcup_{k=1}^n B_k)}<\frac 1{2^{n+1}}.
}
	\end{enumerate}

	Set $x_1=0$ and $\td R_1 =R_1$. We first choose $x_2$ satisfying
	\[
	|x_2-x_1| > \sum_{i=1}^{2} R_i + 10C_0M+ 2^{2} +\td R_{1}. 
	\]
	Clearly, \eqref{choice.x1.claim} for $n=2$ is obtained. Also, $j=1$ and $h=w_{20}(x-x_2)$ satisfies \eqref{h.ini}, which implies \eqref{choice.x2.claim}-\eqref{choice.x3.claim} for $n=2$. Here, we use $\om_{\leq 1}=\om_1=\td \om_1$. 
	
	The choice of $\td R_2\geq \td R_1 = R_1$ satisfying \eqref{condi.tdR} for $n=2$ follows from Lemma \ref{simple.patching.2}; apply it to $f=\om_{\leq 2}|_{t=0}$ and $\ep=\frac 1{2^3}$. 
		
	Assume that $\{x_j\}_{j=1}^n$ and $\td R_{n}$ are given and satisfy \eqref{choice.x1.claim}-\eqref{condi.tdR}.
Then, we pick $x_{n+1}$ such that 
\[
|x_{n+1}-x_j|>	\sum_{i=1}^{n+1} R_i + 10C_0M+ 2^{n+1} +\td R_{n} , 
\quad\forall j=1,\cdots,n.
\]
which follows \eqref{choice.x1.claim}. To achieve \eqref{choice.x2.claim} and \eqref{choice.x3.claim} for $n+1$, we observe that $g=\om_{(n+1)0}(x-x_{n+1})$ satisfies \eqref{condition.for.g},
\EQN{
	\norm{\sum_{j=1}^n\om_{j0}(\cdot-x_j)}_1 +\norm{g}_1 &+\sup\left(\norm{\sum_{j=1}^n \om_{j0}(\cdot-x_j)}_{\infty},\norm{g}_{\infty}\right) \\
	&\leq \sum_{j=1}^{\infty}\norm{\om_{j0}}_1  
	+\sup_j\norm{\om_{j0}}_{\infty}
	\leq M
}
and
\EQN{
	\dist\left(\supp\left(\sum_{j=1}^n \om_{j0}(x-x_j)\right), \supp(g)\right)
	&\geq \inf_{1\leq j\leq n}\dist(B(x_j,1), B(x_{n+1},1))\\
	&\geq \inf_{1\leq j\leq n}|x_{n+1}-x_j| - 2  
	\geq \td R_{n}.
}	
Then by \eqref{condi.tdR} for $n$, the conditions \eqref{choice.x2.claim} and \eqref{choice.x3.claim} for $n+1$ hold; we have 
\[
\supp(\om_{\leq n+1}(\cdot,t)) 
\subset \bke{\bigcup_{j=1}^{n}B_j }\cup B(x_{n+1}, 2C_0M +1)
\subset \bigcup_{j=1}^{n+1}B_j
\]
and
\[
\max_{0\leq t\leq 1} \norm{(\om_{\leq n+1} - \om_{\leq n})(\cdot,t)}_{H^2(\bigcup_{k=1}^{n}B_k)} <\frac 1{2^{n+1}}. 
\]

Applying again Lemma \ref{simple.patching.2} for $f=\om_{\leq n+1}|_{t=0}= \sum_{j=1}^{n+1} \om_{j0}(x-x_j)$ and $\ep=\frac 1{2^{n+2}}$, we can find $\td R_{n+1}\geq \td R_n$ satisfying \eqref{condi.tdR}. Therefore, we have \eqref{choice.x1.claim}-\eqref{condi.tdR} at $(n+1)$th step, so that they hold true for any $n\geq 2$.

		\bigskip
	
	\noindent\texttt{Step 2.} Check the required conditions. 
	
	By the condition \eqref{i}, $\{\om_{\leq n}\}$ is Cauchy in $C([0,1];H^2(B(x_j, 3C_0M)))$ for each $j\in \N$. On the other hand, by Lemma \ref{simple.patching}, for each $j\in \N$ and $k\geq 2$, $\{\om_{\leq n}\}$ is uniformly bounded in $C([0,1];H^k(B(x_j, 3C_0M)))$, so that $\{\om_{\leq n}\}$ is Cauchy even in $C([0,1];H^k(B(x_j, 3C_0M)))$. This implies that for each $0\leq t\leq 1$, we have a pointwise limit solution 
	\[
	\om(x,t) =
	\begin{cases}
	\lim_{n\to\infty}\om_{\leq n}(x,t) 	&x\in \bigcup_{j=1}^{\infty}B(x_j,3C_0M)\\
	0							    &\text{otherwise.}
	\end{cases}
	\]
Obviously, $\om(\cdot,t)\in C^{\infty}$ and $\om$ satisfies \eqref{supp.om.dis} and \eqref{local.behavior.prop} by the conditions \eqref{ii} and \eqref{iii}. Furthermore, $\om\in C([0,1];L^1(\R^2)\cap L^{\infty}(\R^2))$. This is because for any $0\leq t\leq 1$, we have
	 \EQN{
	 	\norm{\om(\cdot,t)}_{1}
	 	&=\sum_{j=1}^{\infty}\norm{\om(\cdot,t)}_{L^1(B_j)}
	 	=\sum_{j=1}^{\infty} \lim_{n\to\infty}\norm{\om_{\leq n}(\cdot,t)}_{L^1(B_j)}
	 	=\sum_{j=1}^{\infty} \norm{\om_{j0}}_{1}
	 	= \norm{\om_0}_1
 		 }
and    
	 \EQN{
	 	\norm{\om(\cdot,t)}_{\infty}
	 	&=\sup_j\norm{\om(\cdot,t)}_{L^{\infty}(B_j)}
	 	=\sup_j\lim_{n\to\infty}\norm{\om_{\leq n}(\cdot,t)}_{L^{\infty}(B_j)}\\
	 	&=\sup_j\norm{\om_{j0}}_{\infty}=\norm{\om_0}_{\infty}.
	 }

	 Finally, we prove that the limit solution $\om$ is the unique classical solution to \eqref{main.eq} for the initial data 
	\[
	\om|_{t=0}(x) = \sum_{j=1}^{\infty}\om_{j0}(x-x_j). 
	\]
	We first show that the limit solution $\om$ solves \eqref{main.eq} in the sense of 
	\begin{align}
	\om(x,t) = \om_0(x) &- \int_0^t (\na^{\perp}\Del^{-1}T_{\ga}\om\cdot\na\om)(x,s) ds, 
	\quad\forall (x,t)\in \R^2\times (0,1).
	\label{main.eq.inte}
	\end{align}	
At $t=0$, it is apparent that the limit solution is same with $\om_0$. Since $\om_{\leq n}$ solves \eqref{main.eq.inte} with $\om_0 = \sum_{j=1}^{n}\om_{j0}(\cdot-x_j)$ for any $n\in \N$, it is enough to prove the uniform convergence $\na^{\perp}\Del^{-1}T_{\ga}\om_{\leq n}\to \na^{\perp}\Del^{-1}T_{\ga}\om$ on each $B(x_j,3C_0M)\times [0,1]$, $j\in \N$. 
For notational simplicity, we suppress the dependence on the variable $t$, if it's not needed. Fix $j\in \N$. For $n>j$ and $x\in B(x_j,3C_0M)=B_j$, we have 
	\EQN{
	|(\Del^{-1}\na^{\perp}T_{\ga}(\om_{\leq n}-\om)(x)|
	&\leq \int |H(x-y)||(\om_{\leq n}-\om)(y)| dy\\
	&=\left(\sum_{\substack{m=1\\m\neq j}}^{n}\int_{B_m}+\int_{B_j}+\sum_{l=n+1}^{\infty}\int_{B_l}\right) |H(x-y)||(\om_{\leq n}-\om)(y)| dy\\
	&=I_1^n + I_2^n +I_3^n.
	}
	
	By the choice of the centers, we have for any $x\in B_j$ and $y\in B_m$, $m\neq j$,
	\[
	|x-y|\geq |x_j-x_m|-6C_0M \geq 2^{\max(j,m)}.
	\]
	This implies that $I_1^n$ converges to 0, as $n$ goes to infinity; for $x\in B_j$,
	\EQN{
	I_1^n&\lesssim \sum^{n}_{\substack{m=1\\m\neq j}}\int_{B_m} \frac 1{|x-y|}|(\om_{\leq n}-\om)(y,t)| dy
	\leq \sum_{m=1}^{n}2^{-m}\norm{(\om_{\leq n}-\om)(\cdot,t)}_{L^1(B_m)}\\
	&\lesssim\sum_{m=1}^{n}2^{-m}\norm{\om_{\leq n}-\om}_{C([0,1];L^{\infty}(B_m))} \to 0, \quad\text{ as } n\to \infty.
	}

	In a similar way, $I_3^n$ approaches to 0, as $n$ goes to infinity;
	\[
	I_3^n \lesssim \sum_{l=n+1}^{\infty}\int_{B_l} \frac 1{|x-y|}|\om(y)| dy\\
	\leq \sum_{l=n+1}^{\infty}2^{-l}\norm{\om_0}_{1} \to 0, \quad\text{ as } n\to \infty. 
	\]
	
	Finally, since $|x-y|\leq |x-x_j|+|y-x_j|\leq 6C_0M$, we obtain
	\[
	I_2^n\lesssim_{M} 
	\max_{0\leq t\leq 1}\norm{(\om_{\leq n}-\om)(\cdot,t)}_{L^{\infty}(B_j)} \to 0, \quad\text{as }n\to \infty.
	\]
	Therefore, we get the uniform convergence of $\na^\perp \De^{-1}T_\ga \om_{\le n}$ and hence $\om$ solves $\eqref{main.eq}$ in the sense of \eqref{main.eq.inte}. Using the equation, we can improve the regularity of the solution in time, so that $\om$ is a classical solution to \eqref{main.eq}. 
	
	For the uniqueness of the classical solution, let $\bar{\om}$ be another classical solution to \eqref{main.eq} for the same initial data. Note that the statement in Lemma \ref{simple.patching.2} holds also for a classical solution $\om$ for initial data $f+g$ where $g\in C^\infty(\R^2)\cap L^1(\R^2)$.
Then, in the same way of obtaining \eqref{conv.H2}, we have
	\EQN{
	\max_{0\leq t\leq 1}\norm{(\om-\om_{\leq n})(\cdot,t)}_{H^2(\cup_{j=1}^n B_j)} <\frac 1{2^n}\\
	\max_{0\leq t\leq 1}\norm{(\bar\om-\om_{\leq n})(\cdot,t)}_{H^2(\cup_{j=1}^n B_j)} <\frac 1{2^n}.
	}
This follows from that $g=\sum_{j=n+1}^{\infty}\om_{j0}(\cdot-x_j)$ satisfies \eqref{condition.for.g} for the same $M$, $f$, and $\ep$ in the construction of $\td R_{n+1}$.  
Therefore, we have $\om=\bar\om$. In other words, the uniqueness of the classical solution holds. 
	This completes the proof.
	\hfill$\square$

\section{Proof of Theorem \ref{thm.noncpt}}\label{sec.non.comp}
In this section, combining the results obtained in the previous sections, we finally construct a non-compactly supported perturbation for the strong ill-posedness of \eqref{main.eq} in the critical Sobolev space.

\

\noindent\textit{Proof of Theorem \ref{thm.noncpt}.} Recall the family of initial data $\td g_A$ in Remark \ref{local.sol.family}. By its construction, for fixed $0<\ga\leq \frac 12$ and $0<\ep<1$, we can find a sequence $\{A_j\}$ such that for any $j\in \N$, $\zeta_j = \td g_{A_j}$ satisfies $\supp(\zeta_j)\subset B(0,1)$ and
 \EQ{\label{xi.smallness}
 \norm{\zeta_j}_1 + \norm{\zeta_j}_{\infty}+\norm{\na \zeta_j}_{2}<\frac {\ep}{2^j},
}
 and the smooth solution $\td\om_j$ to \eqref{main.eq} with initial data $\zeta_j$ achieves 
 \EQ{\label{largeness.omj}
 \norm{\na \td\om_j(\cdot,t_j)}_2 > j
}
 for some $t_j$ which converges to $0$ as $j\to \infty$. 
  
Since the solution to \eqref{main.eq} is translation-invariant, in the case of $\supp(a)\subset B(0,1)$ up to translation, we can apply Proposition \ref{prop.patching.noncompact} to $\om_{10}=a$ and $\om_{j0}=\zeta_j$ for $j\geq 2$. Then, we have a sequence $\{x_j\}_{j\in \N}$ of centers with $x_1=0$ such that for the initial data 
 \[
 \om_0(x) = a(x-x_1) + \sum_{j=2}^{\infty}\zeta_{j}(x-x_j) =: a(x)+\zeta(x)
 \]
 we have a unique classical solution $\om$ to \eqref{main.eq} and the solution satisfies $\om(\cdot,t)\in C^{\infty}(\R^2)$ for any $0\leq t\leq 1$, $\om\in C([0,1];L^1(\R^2)\cap L^{\infty}(\R^2))$, and
 \EQ{\label{om.close.omj}
 \max_{0\leq t\leq 1}\norm{(\om-\om_j)(\cdot,t)}_{H^2(B(x_j,3C_0M))}<1
}
 for sufficiently large $j$. Here, $\om_j$ is a smooth solution to \eqref{main.eq} for the initial data $\zeta_j(x-x_j)$, $C_0$ is the constant defined in Proposition \ref{prop.patching.noncompact}, and $M>1$ is a bound of the initial data in the sense of
 \[
 1+\norm{a}_{H^1}^2+\norm{a}_1 +\norm{a}_{\infty} +\norm{\zeta}_{H^1}^2+\norm{\zeta}_1+\norm{\zeta}_{\infty}\leq M.
 \] 
Note that $\om_j$ for any $j\in \N$ satisfies
\[
\supp(\om_j)\subset B(x_j,3C_0M),\quad
\om_j(x,t)=\td\om_j(x-x_j,t). 
\]
It is easy to see that $\zeta \in C^\infty(\R^2)$ because of $\zeta_j \in C_c^\infty(B(0,1))$ and $|x_j-x_k|\gg 1$ for $j\neq k$.
By \eqref{xi.smallness}, we also get
 \[
 \norm{\zeta}_{\dot{H}^1(\R^2)}+\norm{\zeta}_1 +\norm{\zeta}_{\infty}
 \leq \sum_{j=2}^{\infty} \norm{\na\zeta_j}_{2}+\norm{\zeta_j}_1 +\norm{\zeta_j}_{\infty}<\ep.
 \]
On the other hand, \eqref{largeness.omj}, \eqref{om.close.omj}, and $\supp(\om_j(\cdot,t))\subset B(x_j,3C_0M)$, $0\leq t\leq 1$, implies that
 \EQN{
 	\norm{\om(\cdot,t_j)}_{\dot{H}^1(B(x_j,3C_0M))} 
 	&\geq \norm{\om_j(\cdot,t_j)}_{\dot{H}^1(B(x_j,3C_0M))} -\norm{(\om-\om_j)(\cdot,t_j)}_{\dot{H}^1(B(x_j,3C_0M))}\\
 	&\geq \norm{\td\om_j(\cdot,t_j)}_{\dot{H}^1(\R^2)} - \max_{0\leq t\leq 1}\norm{(\om-\om_j)(\cdot,t)}_{\dot{H}^1(B(x_j,3C_0M))}\\
 	&> j - 1.
 }
Therefore, the constructed perturbation $\zeta$ satisfies all requirements in Theorem \ref{thm.noncpt}. If $\supp(a)\not\subset B(0,1)$ up to translation, we slightly modify the proof of the Proposition and obtain the same conclusion. 

\hfill$\square$


\section{The compact case}\label{sec.comp}

In this section, we prove Theorem \ref{thm.cpt}, the compact case. Unlike the non-compact case, a large distance between local solutions cannot be used in order to minimize their interactions and make a global solution locally behave like local ones. For this reason, we adopt a different scheme; use the smallness in $L^1$-norm of the tail part of a global solution.

The following proposition describes a simple scenario of patching.

\begin{proposition}\label{prop.cpt}  
	Suppose that $f\in C_c^{\infty}(\R^2)$ satisfies
		\EQ{\label{prop.f}
		\supp(f)&\subset \{(x_1,x_2)\in \R^2: x_1\leq -2R_0\} \quad\text{ for some }R_0>0,\\
		&f(x_1,x_2) = -f(x_1,-x_2) \qquad\qquad\forall (x_1,x_2)\in \R^2. 
		}
	Then, for any $0<\ep_0<\frac{R_0}{100}$, we can find $\del=\del(f,\ep_0,R_0)>0$, $t_0=t_0(f,\ep_0,R_0)\in (0,\ep_0)$, and $g=g(f,\ep_0,R_0)\in C_c^{\infty}(B(0,\ep_0))$ such that the following holds.
	\begin{enumerate}[(i)]
		\item $g$ satisfies 
		\EQN{
		&\norm{g}_{\dot{H}^1} + \norm{g}_{\infty}+ \norm{g}_1 + \norm{g}_{\dot{H}^{-1}} < \ep_0\\
		&g(x_1,x_2) = -g(x_1,-x_2), \qquad\forall (x_1,x_2)\in \R^2.
		}
		\item For any given $h\in C_c^{\infty}(\R^2)$ with
		\EQ{\label{condi.h}
		\supp(h)\subset \{(x_1,x_2):x_1\geq R_0\},\quad
		\norm{h}_1 +\norm{h}_{\infty}\leq \del, 	
	}	
	the smooth solution $\om$ to \eqref{main.eq} for the initial data $\om|_{t=0}= f+g+h$ has a decomposition
	\[
	\om = \om_f+\om_g+\om_h,\quad \text{ on }\R^2\times [0,t_0]
	\]
	such that
		\EQ{\label{supp.omfgh}
		\supp(\om_f(\cdot,t))&\subset B(\supp(f),\frac 18R_0),\\
		\supp(\om_g(\cdot,t))&\subset B(0,\ep_0+\frac 18R_0),\\
		\supp(\om_h(\cdot,t))&\subset B(\supp(h), \frac 18R_0), \quad\forall 0\leq t\leq t_0
		}
		and
		\EQ{\label{norm.inflation.omg}
		\norm{\om_g(\cdot,t_0)}_{\dot{H}^1}&>\frac 1{\ep_0}. 
		}	
	\end{enumerate}
\end{proposition}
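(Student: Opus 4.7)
The plan is to build $g$ as a member $\td g_A$ of the family constructed in Remark \ref{local.sol.family} for $A=A(f,\ep_0,R_0)$ sufficiently large, decompose the full solution by Lagrangian transport, and transfer the isolated $\dot H^1$-inflation of $\td g_A$ to $\om_g$ via a perturbation argument based on Lemma \ref{perb.char}. Since the bumps $\rho(2^j x)$ making up $\td g_A$ are concentrated on scales $\lesssim 2^{-a_A}$ and $a_A\to\infty$, the support of $\td g_A$ lies inside $B(0,\ep_0)$ once $A$ is large. Together with the smallness of $\td g_A$ in $L^1, L^\infty, \dot H^1$ recorded in Remark \ref{local.sol.family} and an analogous smallness in $\dot H^{-1}$ (which follows from $\|\rho(2^j\cdot)\|_{\dot H^{-1}}\lesssim 2^{-2j}$ together with the rapid growth of $a_A$), condition (i) is achieved for $A$ large. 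Oddness of $g$ in $x_2$ is inherited from the oddness of $\rho$ and of the cosine-type perturbation appearing in the proof of Proposition \ref{H1.norm.inflation.prop}. Take $t_0:=t_A\le 1/\ln\ln A$, which is $<\ep_0$ for $A$ large.

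For the decomposition, let $u=\na^\perp\De^{-1}T_\ga\om$ be the full velocity of the classical solution $\om$ to \eqref{main.eq} with initial data $f+g+h$, and let $\phi$ be its Lagrangian flow. Define $\om_\star$ for $\star\in\{f,g,h\}$ as the transport of $\star$ along $\phi$; each solves $(\pa_t+u\cdot\na)\om_\star=0$, so by linearity of transport $\om=\om_f+\om_g+\om_h$. Because $\|\om(\cdot,t)\|_1+\|\om(\cdot,t)\|_\infty\le M=M(f)$ when $\del\le 1$ and $g$ is as above, we have $\|u\|_\infty\le C_0 M$. Consequently each component's support lies in the $C_0 M\, t_0$-neighborhood of its initial support; taking $A$ large enough that $C_0 M\, t_0<R_0/8$ (possible since $t_0\to 0$ with $A$) gives \eqref{supp.omfgh}. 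The initial separations $\dist(\supp f,\supp g)$, $\dist(\supp g,\supp h)$, $\dist(\supp f,\supp h)\ge R_0$ then guarantee that the three components remain pairwise disjoint on $[0,t_0]$.

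The heart of the argument is \eqref{norm.inflation.omg}. Let $\td\om_g$ denote the \emph{isolated} solution of \eqref{main.eq} with initial datum $g$, with velocity $\td u_g$ and flow $\td\phi_g$. By Remark \ref{local.sol.family} one has $\|\td\om_g(\cdot,t_0)\|_{\dot H^1}\ge \ln^{1/12}\ln\ln\ln A>1/\ep_0$ for $A$ large, and the underlying Lagrangian deformation satisfies $\|D\td\phi_g(\cdot,t_0)\|_{L^\infty(B(0,1/2))}\gtrsim\ln^{1/4}\ln\ln\ln A$. To transfer this to the full flow, I would apply Lemma \ref{perb.char} with base velocity $\td u_g$ and perturbation $v:=u-\td u_g$. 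The $\om_h$-contribution to $v$ is uniformly small in $W^{1,\infty}$ thanks to $\|h\|_1+\|h\|_\infty\le\del$ together with the boundedness of $\na^\perp\De^{-1}T_\ga$ on $L^1\cap L^\infty$; the self-correction $\na^\perp\De^{-1}T_\ga(\om_g-\td\om_g)$ is closed by a short-time Gr\"onwall argument using that $\om_g(\cdot,0)=\td\om_g(\cdot,0)=g$.

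The main obstacle is the non-small $\om_f$-contribution to $v$: because $f$ is fixed and not small, $\na^\perp\De^{-1}T_\ga\om_f$ has $O(1)$ Lipschitz norm. However, since $\supp(\om_f(\cdot,t))$ remains separated from $\supp(\om_g(\cdot,t))$ by at least $R_0/2$ on $[0,t_0]$, the kernel $H$ of $\na^\perp\De^{-1}T_\ga$ and all its derivatives are bounded on that separation (Lemma \ref{ker.H.lem}), so $\na^\perp\De^{-1}T_\ga\om_f$ has $C^k$-norm on $B(0,\ep_0+R_0/8)$ bounded by $C(f,k,R_0)$. Lemma \ref{perb.char} then yields $\|D\phi-D\td\phi_g\|_{L^\infty(B(0,\ep_0+R_0/8))}\le C(f,R_0)$, a bounded but \emph{non-small} perturbation. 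This is overcome by choosing $A$ so large that $\|D\td\phi_g(\cdot,t_0)\|_\infty$ dwarfs the $O(1)$ correction; then $\|D\phi(\cdot,t_0)\|_{L^\infty(\supp g)}\ge\tfrac12\|D\td\phi_g(\cdot,t_0)\|_\infty$ still exceeds any prescribed threshold. Repeating the calculation \eqref{H1.norm.inflation.proof} with $\phi$ replacing $\td\phi_g$ — the high-frequency $\cos(kx_1)$ structure already embedded in $g=\td g_A$ via Proposition \ref{H1.norm.inflation.prop} produces the decisive oscillatory lower bound — yields $\|\om_g(\cdot,t_0)\|_{\dot H^1}>1/\ep_0$. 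Finally $\del=\del(f,\ep_0,R_0)$ is fixed at the end small enough that the $\om_h$-contribution cannot degrade this bound.
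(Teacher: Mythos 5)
Your construction of $g$, the choice of $t_0$, the Lagrangian decomposition $\om=\om_f+\om_g+\om_h$, the support statements \eqref{supp.omfgh}, and the treatment of $h$ (smallness of $\na^{\perp}\De^{-1}T_\ga\om_h$ on the separated supports, closed by Gr\"onwall as in Lemma \ref{simple.patching.2}) all match the paper's Step 2 and are fine. The gap is in the heart of the argument: the transfer of the $\dot H^1$-inflation from the isolated $g$-solution to $\om_g$ via Lemma \ref{perb.char}.

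You correctly identify that the $\om_f$-contribution to $v=u-\td u_g$ is only $O(1)$ in $W^{1,\infty}$ near $\supp(g)$, and you propose to absorb the resulting ``$O(1)$ correction'' to $D\phi-D\td\phi_g$ by taking $A$ large. But the bound you invoke does not give an $A$-independent correction. The constant $C$ in Lemma \ref{perb.char} depends on $\norm{D^2u}_{L^\infty([0,1]\times\R^2)}$ of the base velocity; whichever of the two flows you take as the base, its velocity is generated by data containing $g=\td g_A$, whose frequencies run up to $2^{b_A}$ (resp.\ $2^{A\ln A}$), so $\norm{D^2 u}_\infty\to\infty$ as $A\to\infty$. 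In Proposition \ref{H1.norm.inflation.prop} this is harmless because the perturbation $\norm{v}_{W^{1,\infty}}\lesssim k^{-1/3}$ carries a free small parameter chosen last; here $\norm{v}_{W^{1,\infty}}$ is bounded below by the fixed $f$-contribution and cannot be made small, so the product $C(A)\cdot O(1)\cdot e^{C_1 O(1)}$ is useless. Consequently neither the claim $\norm{D\phi(\cdot,t_0)}_{L^\infty(\supp g)}\ge\tfrac12\norm{D\td\phi_g(\cdot,t_0)}_\infty$ nor the repetition of \eqref{H1.norm.inflation.proof} (which needs the specific entry $\pa_2\phi_2$ of the \emph{combined} flow to be large on the ball $B(x_L,\del)$ where the oscillation was placed relative to the \emph{isolated} flow) is justified.

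This is precisely the point where the paper abandons the perturbation argument (it says so explicitly in the introduction). Instead of comparing the two flows, Lemma \ref{lld.cpt} re-derives the large Lagrangian deformation directly for the combined flow: it passes to a moving frame centered at $a(t)=\phi_1(0,0,t)$, Taylor-expands $\na^{\perp}\De^{-1}T_\ga\om_f$ there into a drift, an $O(1)$ strain $b(t)(-y_1,y_2)$, and a quadratically small error $E$, and then tracks the competition between $b(t)$ and the self-induced rate $\la(t)\gtrsim \sqrt{\ln\ln A}/(\ln\ln\ln A)^2$ inside the ODE \eqref{eqn.Dphi.12} for $D\Phi$, showing the latter wins for large $A$. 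Closing that argument requires the sign-preservation and ratio bounds \eqref{approx.Phi.12} and the new Riesz-transform estimate of Lemma \ref{small.reisz.lem} to control the off-diagonal terms; none of these ingredients appear in your proposal, and without them the inflation \eqref{norm.inflation.omg} is not established.
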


To prove this proposition, we need some preliminary lemmas. The first lemma is about the finite time propagation.

\begin{lemma}\label{supp.sol.lemma} Let $\Om$ be a smooth solution to 
	\EQ{\label{eqn.W.BE}
	\begin{cases}
	\pa_t \Om+\na^{\perp}\Del^{-1}T_{\ga}\Om \cdot \na \Om + \left(B+ E-C\right) \cdot\na \Om=0\\
	C(t) = (-\pa_2\De^{-1}T_{\ga}\Om(0,0,t),\ 0)^\intercal \\
	\Om|_{t=0} = \Om_0
	\end{cases}
}
	where $B$, $E$, and $\Om_0$ are smooth functions satisfying 
	\begin{itemize}
		\item 
		\[
		\norm{\Om_0}_\infty \leq B_0, \quad \text{for some } B_0>0,
		\]
		\EQ{\label{supp.in.bR}
		\supp(\Om_0)\subset B(0,R),\quad \text{for some } R>0,
	}
		\item $B$ and $E$ are divergence-free
		\[
		\na \cdot B = \na \cdot E =0.
		\]
		\item For some positive numbers $B_1$ and $B_2$,
		\[
		|B(y,t)|\leq B_1|y|, \quad |E(y,t)|\leq B_2 |y|^2, \quad\forall (y,t)\in \R^2\times [0,1].
		\]
	\end{itemize}
	
Then, we can find $R_0>0$ and $0<t_0<1$ both depending only on $B_0$, $B_1$ and $B_2$ such that if $0<R\leq R_0$,
a characteristic line $\Phi$ which solves 
\EQ{\label{eq.Phi.BE}
\begin{cases}
\pa_t \Phi(y,t) 
= (\na^{\perp}\Del^{-1}T_{\ga}\Om+B+E-C)(\Phi(y,t),t)\\
\Phi(y,0) = y
\end{cases}
}
satisfies
\[
|\Phi(y,t)|\le 2R, \quad\forall |y|\le R, \ t\in [0,t_0].
\]
In particular, the solution $\Om$ satisfies
\[
\supp(\Om(\cdot,t))\subset B(0,2R), \quad\forall 0\leq t\leq t_0. 
\]
\end{lemma}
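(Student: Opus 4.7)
The plan is to run a bootstrap/continuation argument on the characteristic flow $\Phi$ of \eqref{eq.Phi.BE}. First I observe that the full drift $V:=\na^\perp\De^{-1}T_\ga\Om+B+E-C$ is divergence-free in $x$: the Biot--Savart piece is automatically so, $B$ and $E$ are divergence-free by hypothesis, and $C(t)$ is spatially constant. Hence $\norm{\Om(\cdot,t)}_\infty=\norm{\Om_0}_\infty\le B_0$ by transport, and $\supp(\Om(\cdot,t))=\Phi(\supp(\Om_0),t)\subset\Phi(B(0,R),t)$ for as long as $\Phi$ remains smooth. Define
\[
T^*=\sup\{\tau\in[0,1]:|\Phi(y,s)|\le 2R\ \text{for all }|y|\le R\text{ and }s\in[0,\tau]\};
\]
by continuity $T^*>0$, and it suffices to show $T^*\ge t_0$ for an appropriate $t_0=t_0(B_0,B_1,B_2)$.

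On $[0,T^*]$ the bootstrap assumption gives $\supp(\Om(\cdot,t))\subset B(0,2R)$. Using the pointwise bound $|H(z)|\lesssim_\ga |z|^{-1}$ for the kernel $H$ of $\na^\perp\De^{-1}T_\ga$ (this is inherited from the same integral representation as in Lemmas \ref{estimate.K} and \ref{estimate.tdK}, and is precisely the statement invoked earlier as Lemma \ref{ker.H.lem}), I get for $|x|\le 2R$,
\[
|\na^\perp\De^{-1}T_\ga\Om(x,t)|\le\int_{B(0,2R)}|H(x-y)|\,|\Om(y,t)|\,dy\lesssim_\ga B_0\int_{B(x,4R)}\frac{dy}{|x-y|}\lesssim B_0 R,
\]
and in particular $|C(t)|\lesssim_\ga B_0 R$. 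Combining with $|B(x,t)|\le 2B_1 R$ and $|E(x,t)|\le 4B_2 R^2$ on $|x|\le 2R$, there is an absolute (only $\ga$-dependent) constant $A$ such that $|V(\Phi(y,t),t)|\le A(B_0+B_1+B_2 R_0)R$ for every $|y|\le R$ and $t\in[0,T^*]$.

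Integrating $\pa_t\Phi=V(\Phi,t)$ yields $|\Phi(y,t)-y|\le A(B_0+B_1+B_2 R_0)Rt$ on $[0,T^*]$. I fix $R_0\le 1$ and then choose $t_0\le \tfrac{1}{2A(B_0+B_1+B_2)}$, both depending only on $B_0,B_1,B_2$; with these choices $|\Phi(y,t)|\le R+\tfrac12 R<2R$ strictly on $[0,\min(T^*,t_0)]$. If $T^*<t_0$, continuity of $\Phi$ would push the strict inequality slightly past $T^*$, contradicting maximality. Hence $T^*\ge t_0$, which is the first conclusion; the support statement $\supp(\Om(\cdot,t))\subset B(0,2R)$ then follows from $\supp(\Om(\cdot,t))\subset\Phi(B(0,R),t)$.

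I do not expect a substantive obstacle here: the argument is a standard Gronwall-type closure, and the only nontrivial input beyond elementary transport estimates is the $|z|^{-1}$ kernel bound, already at hand from Section \ref{sec.lld}. One thing worth flagging is that the subtraction of $C(t)$ in \eqref{eqn.W.BE} is \emph{not} exploited in this proof: the crude estimate $|\na^\perp\De^{-1}T_\ga\Om(x,t)-C(t)|\lesssim B_0 R$ suffices for propagation of support. The finer cancellation that subtracting $C$ builds in (keeping the trajectory of the origin nearly stationary in the first coordinate) is presumably reserved for a sharper comparison argument later in the patching, not needed for this lemma.
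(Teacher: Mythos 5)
Your argument is correct and follows essentially the same route as the paper: bound the transport velocity by $O(B_0R)$ using the $|z|^{-1}$ kernel bound and the $O(R)$-sized support, bound $B$ and $E$ on $B(0,2R)$, and integrate the ODE for $\Phi$ with a continuity argument. The only cosmetic difference is that the paper dispenses with the bootstrap hypothesis on the support when estimating the nonlocal term, by invoking $\norm{\na^{\perp}\De^{-1}T_{\ga}\Om}_{\infty}\lesssim\norm{\Om}_1^{1/2}\norm{\Om}_{\infty}^{1/2}$ together with exact $L^1$- and $L^\infty$-norm conservation, which yields the $O(B_0R)$ velocity bound for all times directly.
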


\begin{proof}

From \eqref{eq.Phi.BE}, we obtain
\EQ{\label{eq.abs.Phi}
\pa_t |\Phi(y,t)|\leq 2\norm{\na^{\perp}\Del^{-1}T_{\ga}\Om}_{\infty} + B_1|\Phi(y,t)| + B_2|\Phi(y,t)|^2.
}
By using $L^p$-norm preservation and \eqref{supp.in.bR}, we have
\[
 \norm{\na^{\perp}\Del^{-1}T_{\ga}\Om}_{\infty}
 \lesssim\norm{\Om}_1^{\frac 12}\norm{\Om}_{\infty}^{\frac 12}
 \lesssim R\norm{\Om_0}_{\infty}\leq RB_0.
\]
Combining with \eqref{eq.abs.Phi}, we can find $t_0>0$ and $R_0>0$ such that if $0<R\leq R_0$,  
\[
|\Phi(y,t)|\leq 2R, \quad\forall |y|\le R, \ t\in [0,t_0].
\]

Furthermore, using the characteristic, \eqref{eqn.W.BE} can be written as $\Om(\Phi(y,t),t)=\Om_0(y)$, so that
\[
\supp(\Om(\cdot,t))\subset \Phi(\supp(\Om_0),t).
\]
Then, it easily follows that $\supp(\Om(\cdot,t))\subset B(0,2R)$ for any $0\leq t\leq t_0$.   
\end{proof}

\bigskip

Recall the definition of $g_A$ in \eqref{defn.gA}. This family of initial data was used in order to create large Lagrangian deformation. Now, we redefine $g_A$ when $\ga = \frac 12$ by 
\EQ{\label{defn.gA12}
g_A (x)= \frac 1{\ln\ln\ln A}\frac 1{\sqrt{\ln\ln A}}\sum_{A\leq j<A\ln A} \frac 1{\sqrt{j}}\rho(2^j x),
}
where $\rho$ is given as in \eqref{defn.rho}.
In the case of $0<\ga<\frac 12$, we use the same $g_A$ in \eqref{defn.gA}. 

Then, $g_A$ satisfies
\begin{itemize}
\item $\supp(g_A)\subset B(0,2\cdot 2^{-A})$.
\item 
\[
\norm{g_A}_1 \lec 2^{-2A}, \quad
\norm{g_A}_\infty \lec \frac 1{A^\ga}, \quad
\norm{g_A}_{\dot{H}^{-1}}\lec 2^{-A}, \quad
\norm{\na g_A}_2 \lec \frac 1{\ln\ln\ln A}.
\]

\item 
\[
 \int_{z_1>0,z_2>0}\frac{1}{|z|^2} \ln^{-\ga}\left(e + \frac 1{|z|}\right) e^{-|z|^4} g_A(z)dz 
\gtrsim \frac {\sqrt{\ln\ln A}}{\ln\ln\ln A}. 	
\]
\end{itemize}

\medskip
From this newly redefined family $\{g_A\}$, we extract a sequence of local initial data and sequentially patch them to the given initial data $a$, given in the statement of Theorem \ref{thm.cpt}. Here, this patched one becomes a desired perturbed initial data. To this end, the next two lemmas confirm that the large Lagrangian deformation created by a current initial data will not be destroyed even in the presence of the previously chosen ones. In the first lemma, we estimate $\cR_{ii}T_\ga (g_A\circ\phi_A)= \De^{-1}\pa_{ii}T_\ga (g_A\circ\phi_A)$ in $L^\infty$-norm, which is a key ingredient of Lemma \ref{lld.cpt}. The proof can be obtained by a slight modification of the one for Lemma 3.2 in \cite{BL15}.

\begin{lemma}\label{small.reisz.lem} Let $\{g_A\}$ be a family of functions defined as in \eqref{defn.gA} for $0<\ga<\frac 12$ and \eqref{defn.gA12} for $\ga=\frac 12$. 
	Suppose that $\phi_A=(\phi_A^1,\phi_A^2):\R^2 \to \R^2$ is a bi-Lipschitz function such that
	\begin{itemize}
		\item  $\phi_A(0)=0$.
		\item $\phi_A^1(y_1,-y_2) = \phi_A^1 (y_1,y_2) $ and $\phi_A^2(y_1,-y_2) = -\phi_A^2 (y_1,y_2)$.
		\item For some integer $m_A \ge 1$, 
		\EQ{\label{defm.norm}
		\norm{D\phi_A}_{L^\infty(|y|\leq 4\cdot 2^{-A})} \leq 2^{m_A}, \quad \norm{D(\phi_A^{-1})}_{L^\infty(|y|\leq 2\cdot 2^{-A})} \leq 2^{m_A}. 
		}
		\item $|\det(D\phi_A)| = |\det(D(\phi_A^{-1}))| =1$.
		\item If $|\phi_A(y)|\leq 2\cdot 2^{-A}$, then $|y|\leq 4\cdot 2^{-A}$.
	\end{itemize}
	Then, we have
	\EQ{\label{small.Riesz}
	\norm{\cR_{11}T_\ga (g_A\circ\phi_A)}_\infty +\norm{\cR_{22}T_\ga (g_A\circ\phi_A)}_\infty \lesssim_{\ga} \frac {2^{m_A}}{\sqrt{\ln\ln A}}.
	}
\end{lemma}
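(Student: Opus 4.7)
The plan is to decompose $g_A\circ\phi_A$ into its scale pieces and estimate the image under $\cR_{ii}T_\gamma$ of each piece separately, then assemble. Writing
\[
g_A\circ\phi_A = C_A\sum_{j\in J_A} c_j\,\tilde\rho_j,
\qquad
\tilde\rho_j(y):=\rho\bke{2^j\phi_A(y)},
\]
where $J_A$ and the weights $c_j$ are as in \eqref{defn.gA}/\eqref{defn.gA12}, I first record the geometric/algebraic properties of each piece. By the bi-Lipschitz bound \eqref{defm.norm}, $\supp(\tilde\rho_j)\subset\phi_A^{-1}(\supp\rho(2^j\cdot))$ is contained in the union of four disjoint patches, one near each preimage of $(\pm 2^{-j},\pm 2^{-j})$, each of diameter $\lesssim 2^{m_A-j}$. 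Because $|\det D\phi_A|\equiv 1$ and the four signed blobs of $\rho$ are preserved by the coordinate change, each of the four patches carries integral zero; in particular $\int\tilde\rho_j=0$. The assumed symmetry of $\phi_A$ in $y_2$ also transmits the $y_2$-oddness of $\rho$ to $\tilde\rho_j$.

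The key per-scale estimate I aim to establish is
\[
\norm{\cR_{ii}T_\gamma\tilde\rho_j}_\infty \lesssim_\gamma \frac{2^{m_A}}{j^{\gamma}},
\]
proved by splitting the convolution $\int K_{ii}^{\gamma}(x-y)\tilde\rho_j(y)\,dy$ (with $K_{ii}^{\gamma}$ the kernel of $\cR_{ii}T_\gamma$, whose pointwise bound $|K_{ii}^{\gamma}(z)|\lesssim|z|^{-2}\ln^{-\gamma}(e+|z|^{-1})$ follows from the analysis behind Lemmas~\ref{estimate.K}--\ref{estimate.tdK}) into the four patches of $\tilde\rho_j$, and for each patch into near-field and far-field contributions. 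For the far-field ($|x-y_0|\gg 2^{m_A-j}$ with $y_0$ the patch center), patch mean-zero gives the first-order cancellation $K_{ii}^\gamma(x-y)-K_{ii}^\gamma(x-y_0)$, producing a factor $r_j/|x-y_0|^3$ that integrates to an $O(1)$ contribution. For the near-field, the same mean-zero subtraction tames the would-be logarithmic divergence of $\int|K_{ii}^{\gamma}|$, and the $T_\gamma$-regularization supplies the $j^{-\gamma}$ gain through the Fourier-side estimate already used in Lemmas~\ref{estimate.K}--\ref{estimate.tdK}. The overall prefactor $2^{m_A}$ enters through the size $r_j\lesssim 2^{m_A-j}$ of each patch (so the first-order term scales like $r_j/2^{-j}=2^{m_A}$ relative to the unperturbed case $\phi_A=\mathrm{id}$).

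Combining the per-scale bounds with the weights,
\[
\norm{\cR_{ii}T_\gamma(g_A\circ\phi_A)}_\infty
\lesssim 2^{m_A}\,C_A\sum_{j\in J_A}\frac{c_j}{j^{\gamma}}
= 2^{m_A}\,C_A\sum_{j\in J_A} \frac{1}{j^{2\gamma}},
\]
and the normalization $C_A$ has been chosen in \eqref{defn.gA}/\eqref{defn.gA12} so that this product is $\lesssim_\gamma 2^{m_A}/\sqrt{\ln\ln A}$, using the computations $\sum_{j\in J_A}j^{-2\gamma}\sim_\gamma \ln A$ for $0<\gamma<\tfrac12$ and $\sim \ln\ln A$ for $\gamma=\tfrac12$ that already appeared in \eqref{estimate.B}. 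The hardest step will be the near-field near a single patch: the pointwise kernel bound alone is not integrable at $\gamma\le 1$, so one must genuinely use the cancellation provided by the patch mean-zero together with the $\ln^{-\gamma}$ regularization of $T_\gamma$ to beat the logarithmic singularity and extract the decisive $j^{-\gamma}$ factor; this is exactly the place where the proof deviates from \cite[Lemma~3.2]{BL15} and relies on the regularized kernel estimates developed in Section~\ref{sec.lld}.
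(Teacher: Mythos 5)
There are two genuine gaps in your proposal, and each one by itself breaks the argument.

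First, the cancellation mechanism you rely on does not exist. Each of the four ``patches'' of $\tilde\rho_j=\rho(2^j\phi_A(\cdot))$ is the pullback of a single signed bump $\pm\varphi$ from \eqref{defn.rho}, and since $\varphi\ge 0$ with $\varphi\equiv 1$ on $B(0,\tfrac12)$, a change of variables with $|\det D\phi_A^{-1}|=1$ gives $\int_{\text{patch}}\tilde\rho_j=\pm 4^{-j}\int\varphi(2^{100}\,\cdot\,)\,dx\neq 0$. Only the sum over all four patches vanishes. This kills your far-field estimate precisely in the dangerous regime where the evaluation point $x$ satisfies $|x|\ll 2^{-j-m_A}$ (support of $\tilde\rho_j$ far from $x$ but at distance comparable to its own diameter times $2^{\pm m_A}$): there the naive bound $\norm{K_{ii}}_\infty\norm{\tilde\rho_j}_1\sim 1$ per scale is not summable over the $\sim\ln A$ (or $\sim A\ln A$) scales, and neither per-patch mean zero (false) nor total mean zero (the kernel varies by an $O(1)$ factor across the full support, so subtracting $K_{ii}(x-y_0)$ gains nothing) produces decay in $j$. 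The cancellation the paper uses is of a different nature: since $\tilde\rho_j$ is odd in $y_2$ (which you note but never use) and $K_{11},K_{22}$ are \emph{even} in $z_2$, one has $\cR_{ii}T_\ga\tilde\rho_j(0)=0$ exactly; subtracting this value and using $\abs{\tilde\rho_j(y-z)-\tilde\rho_j(-z)}\le |y|\,\norm{\na\tilde\rho_j}_\infty\lesssim |y|\,2^{j+m_A}$ over the annulus $2^{-j-m_A}\lesssim|z|\lesssim 2^{-j+m_A}$ yields a gain of $2^{j}|y|\,m_A$, which is geometrically summable over the scales $2^{-j}\gg 2^{m_A}|y|$. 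The remaining $O(m_A)$ intermediate scales are handled with no cancellation at all, via $\norm{\cR_{ii}T_\ga f}_\infty\lesssim\norm{f}_2^{1/2}\norm{\na f}_\infty^{1/2}\lesssim 2^{m_A/2}$, and the scales with $2^{-j}\ll 2^{-m_A}|y|$ by $\norm{K_{ii}}_{L^\infty(|z|\sim|y|)}\norm{\tilde\rho_j}_1$.

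Second, your final assembly is quantitatively wrong even granting your per-scale bound. With $c_j=j^{-\ga}$ and $C_A=\frac{1}{\sqrt{\ln A}\,\ln\ln A}$ you compute $2^{m_A}C_A\sum_j j^{-2\ga}\sim 2^{m_A}C_A\ln A= 2^{m_A}\frac{\sqrt{\ln A}}{\ln\ln A}$, which \emph{diverges} as $A\to\infty$ rather than being $\lesssim 2^{m_A}/\sqrt{\ln\ln A}$. The correct bookkeeping is not (sum of weights)$\times$(per-scale bound) but $\sup_j(\text{weight}_j)\times\sum_j\norm{\cR_{ii}T_\ga(\rho_j\circ\phi_A)}_\infty$: the per-scale bounds sum to $O(2^{m_A/2}m_A)$ \emph{uniformly in the number of scales} (geometric series plus $O(m_A)$ intermediate terms), while the largest weight is $C_A a_A^{-\ga}$ for $\ga<\tfrac12$, resp.\ $\frac{1}{\ln\ln\ln A\sqrt{\ln\ln A}}A^{-1/2}$ for $\ga=\tfrac12$, both comfortably $\le\sqrt{\ln\ln A}^{-1}$. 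In particular the $\ln^{-\ga}$ regularization of $T_\ga$ plays no role in the lemma (the paper only uses $|K_{ii}(z)|\lesssim|z|^{-2}$); the smallness comes entirely from the normalization of $g_A$, not from a $j^{-\ga}$ gain per scale.
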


\begin{proof}
Recall the definition of $g_A$,
\EQN{
	g_A(y)
	=
	\begin{cases}
		C_A\sum_{a_A\leq j < b_A}\frac 1{j^{\ga}}\rho(2^j y), &0<\ga<\frac 12\\[10pt]
		\frac 1{\ln\ln\ln A}\frac 1{\sqrt{\ln\ln A}}\sum_{A\leq j<A\ln A} \frac 1{\sqrt{j}}\rho(2^j y), &\ga=\frac 12
	\end{cases}
}
where $C_A=\frac 1{\sqrt{\ln A}}\frac 1{\ln\ln A}$, $a_A=A^{\frac 1{1-2\ga}}$, and $b_A=(A+\ln A)^{\frac 1{1-2\ga}}$. Here, $\rho$ is an odd function in both variables and satisfies  $\frac 12\leq |x|\leq 2$ for $x\in \supp(\rho)$. (See \eqref{defn.rho})

\medskip

We first consider $\cR_{ii}T_\ga (\rho_j\circ \phi_A)$ for $j\geq A$, where $\rho_j =\rho(2^j\cdot)$. For the convenience, we drop the index $A$ in $g_A$, $\phi_A$ and $m_A$ below. Denote the kernel for the operator $\cR_{ii}T_\ga$ by $K_{ii}$ for $i=1,2$ and fix $y\in \R^2\setminus\{0\}$ with $|y|\sim 2^{-l}$ for some $l$. Note that the kernel $K_{ii}$, $i=1,2$, can be obtained by taking a weak derivative to the kernel $H_1$ and $H_2$ of $\na^\perp\De^{-1} \ln^{-\ga}(e-|\na|)$ and $\na^\perp\De^{-1} \ln^{-\ga}(e-\De)$, respectively, which are given in \eqref{H1} and \eqref{H2}. Then, we can easily see from \eqref{est.H} that
$|K_{ii}(y)|\lec \frac 1{|y|^2}$ for $y\neq 0$. 
\bigskip 

\noindent\textbf{Case 1.} $2^j \ll 2^{l-m}$. 

By the assumption on $\phi$, for $x$ with $|\phi(x)|\leq 2\cdot 2^{-A}$, we have $|x|\leq 4\cdot 2^{-A}$. Then, using $\phi(0)=0$ and \eqref{defm.norm},  $x$ with $2^{-j-1}\leq |\phi(x)|\leq  2^{-j+1}$ satisfies 
\EQ{\label{y-z}
2^{-j+m}\gtrsim |x| \gtrsim 2^{-j-m}. 
}
Therefore, if $y$ and $z$ satisfy $\phi(y-z)\in \supp(\rho_j)$, we have 
$2^{-j-1}\leq |\phi(y-z)|\leq 2^{-j+1}$ and hence $2^{-l}\ll 2^{-j-m}\lec |y-z| \lec 2^{-j+m} $. Combining with $|y|\sim 2^{-l}$, for such $y$ and $z$, we get
\[
2^{-j-m}\lec |z| \lec 2^{-j+m}. 
\]
Now, we estimate $\cR_{ii}T_\ga (\rho_j\circ \phi)$ for $i=1,2$.
\EQN{
|\cR_{ii}T_\ga (\rho_j\circ\phi)(y) |
&= \left|\int (\rho_j\circ\phi)(y-z) K_{ii}(z)dy\right|\\
&\le \int_{2^{-j-m}\lesssim |z| \lesssim 2^{-j+m}} 
|(\rho_j\circ\phi)(y-z)-(\rho_j\circ\phi)(-z)| |K_{ii}(z)| dy\\
&\lec |y|\norm{\na (\rho_j\circ\phi)}_\infty 
\int_{2^{-j-m}\lesssim |z| \lesssim 2^{-j+m}} \frac 1{|z|^2} dz\\
&\lec 2^{-l + m + j} m.
}
In the first inequality, we use $\phi(y-z)\in \supp(\rho_j)$ and 
\[
\cR_{ii}T_\ga (\rho_j\circ\phi)(0)
=\int_{c\leq |z|\leq C} (\rho_j\circ\phi)(-z) K_{ii}(z) dz =0
\]
for any arbitrary constants $0<c<C<+\infty$.
This is because $\phi^1$ and $K_{ii}$ for $i=1,2$ are even in $z_2$, while $\phi^2$, and $\rho$ are odd in $z_2$. 

\medskip

\noindent\textbf{Case 2.} $2^j \gg 2^{l+m}$

By \eqref{y-z} with $2^{-l}\gg 2^{-j + m}$, we have $|z|\sim 2^{-l}$ when $\phi(y-z) \in \supp(\rho_j)$ and $|y|\sim 2^{-l}$. This implies that for $i=1,2$
\[
|\cR_{ii}T_\ga (\rho_j\circ\phi)(y) |
\le \norm{K_{ii}}_{L^\infty(|y|\sim 2^{-l})} \norm{\rho_j\circ \phi}_1
\lec 4^{l-j} .
\]

	\medskip
	
\noindent\textbf{Case 3.} $	2^{l-m}\lec 2^j \lec 2^{l+m}$	

\[
\norm{\cR_{ii}T_\ga (\rho_j\circ \phi)}_\infty
\lec \norm{\rho_j\circ \phi}_2^\frac12 \norm{\na (\rho_j\circ \phi)}_\infty^\frac12
\lec \norm{\rho_j}_2^\frac12 \norm{\na \rho_j}_\infty^\frac12 2^{\frac{m}2}
\lec 2^{\frac{m}2}.
\]

\medskip

Combining all the cases, we have
\[
\sum_j \norm{\cR_{ii}T_\ga (\rho_j\circ \phi)}_\infty 
\lec 2^{\frac{m}2} m  + m \lec 2^{\frac{m}2} m, \quad i =1,2.
\]
Then, \eqref{small.Riesz} easily follows. 
\end{proof}

In the following lemma, $f$ represents the previously chosen initial data together with the given one. 
Then, it says that we can always find a current initial data in the family $\{g_A\}$ such that the deformation matrix for the patched initial data can still be large as desired. In other words, the current initial data still creates the large Lagrangian deformation even in the presence of the previously chosen one. 
\begin{lemma}\label{lld.cpt}
Suppose that $f$ satisfies \eqref{prop.f}. Let $\om$ be a smooth solution to
	\[
	\begin{cases}
	\pa_t \om+\na^{\perp}\Del^{-1}T_{\ga}\om \cdot \na \om =0\\
	\om|_{t=0} = f+ g_A.
	\end{cases}
	\]
Then, a characteristic line $\phi$ which solves
\EQ{\label{eq.Phi.12}
\begin{cases}
\pa_t \phi(x,t) = \na^{\perp}\Del^{-1}T_{\ga}\om(\phi(x,t),t)\\
\phi(x,0)=x
\end{cases}
}	
satisfies
	\EQ{\label{lld.cpt.ineq.12}
	\max_{0\leq t\leq \frac 1{\ln\ln\ln A}}\norm{D\phi(\cdot,t)}_{L^\infty(B(0,10\cdot 2^{-A}))}>\ln^{\frac 14}\ln\ln\ln A
}
for sufficiently large $A$. 
\end{lemma}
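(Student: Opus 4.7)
The plan is to split $\om=\om_f+\om_g$ into pieces passively transported from $f$ and from $g_A$ by the common velocity $u=\na^\perp\De^{-1}T_\ga\om$, show the far-away piece $\om_f$ affects the origin only through a smooth, uniformly bounded external velocity, and then mimic Proposition~\ref{large.lagrangian} for $\om_g$ in a co-moving frame.

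\textbf{Far-field expansion.} Because $\|u\|_\infty\lec\|f\|_1+\|f\|_\infty+\|g_A\|_1+\|g_A\|_\infty$ is uniformly bounded in $A$ and $\supp(f)\subset\{x_1\le -2R_0\}$, the piece $\om_f$ stays supported inside $\{x_1\le -3R_0/2\}$ throughout $[0,t^*]$ with $t^*:=1/\ln\ln\ln A$, provided $A$ is large. Hence $u_f:=\na^\perp\De^{-1}T_\ga\om_f$ is real-analytic on $\{|x|\le R_0/2\}$ and admits a Taylor expansion $u_f(x,t)=C(t)+B(t)x+E(x,t)$ with $|C|,\|B\|\lec\|f\|_1$ and $|E(x,t)|\lec\|f\|_1|x|^2$ uniformly on $[0,t^*]$. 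The $x_2$-oddness of $f$ forces $B(t)=\mathrm{diag}(b_{11}(t),-b_{11}(t))$.

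\textbf{Co-moving frame and confinement.} Let $c(t):=\phi(0,t)$, which by the $x_2$-oddness of $\om$ lies on the $x_1$-axis, and set $\Om(y,t):=\om_g(y+c(t),t)$. A direct computation places $\Om$ into the setting of Lemma~\ref{supp.sol.lemma} with forcing fields $B,E$ essentially as above (translating by $c(t)$ only perturbs them by bounded quantities). Since $g_A$ is supported in $B(0,2\cdot 2^{-A})$, Lemma~\ref{supp.sol.lemma} yields $\supp(\Om(\cdot,t))\subset B(0,4\cdot 2^{-A})$ for $0\le t\le t^*$ and $A$ large, and hence $\supp(\om_g(\cdot,t))\subset B(c(t),4\cdot 2^{-A})\subset B(0,10\cdot 2^{-A})$ since $|c(t)|\lec t^*$ is much smaller than $R_0$.

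\textbf{Hyperbolic lower bound and conclusion.} Let $\Phi$ be the characteristic of the $\Om$-equation, so $\Om(\Phi(y,t),t)=g_A(y)$. The $x_2$-oddness of $\Om$ pins $\Phi^2(y_1,0,t)\equiv 0$ and $\Phi(0,t)\equiv 0$, and forces $Du_g(0,t)=\mathrm{diag}(\la_g(t),-\la_g(t))$ with $\la_g(t)=-\pa_{12}\De^{-1}T_\ga\Om(0,t)$, by exactly the parity argument used in Proposition~\ref{large.lagrangian}. Combined with the diagonal $B$, this gives $D\Phi(0,t)=\mathrm{diag}(e^{\Lambda(t)},e^{-\Lambda(t)})$ with $\Lambda(t)=\int_0^t(\la_g+b_{11})\,ds$. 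Writing $\la_g(t)=\int K(\Phi(y,t))g_A(y)\,dy$ and applying Lemmas~\ref{estimate.K} and \ref{estimate.tdK} together with the quadrant-positivity of $g_A$ yields $\la_g(t)\gec_\ga e^{-\|D\Phi(\cdot,t)\|_\infty^4}G_A$ with $G_A\gec\sqrt{\ln\ln A}/\ln\ln\ln A$; since $|b_{11}|\lec\|f\|_1=O(1)$ is negligible compared to $\la_g$ for $A$ large, the Gr\"onwall step of Proposition~\ref{large.lagrangian} goes through (with an $O(1)$ additive shift) and gives $\max_{0\le t\le t^*}\|D\Phi\|_{L^\infty}\ge \ln^{1/4}\ln\ln\ln A$. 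This transfers to $D\phi$ on $B(0,10\cdot 2^{-A})$ because the change of frame is a pure translation.

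\textbf{Main obstacle.} The delicate point is the sign lower bound for $\la_g$: the change-of-variable identity $\la_g=\int K(\Phi(y))g_A(y)\,dy$ yields a cleanly positive integrand only if $\Phi$ approximately preserves the four quadrants of $\R^2$, equivalently if $\Om$ stays approximately $x_1$-odd despite the quadratic perturbation $E$ breaking that symmetry. Showing that the drift of the $y_1=0$ axis under the co-moving flow is of lower order than $2^{-A}$ on $\supp(\Om)$ relies crucially on the sharp $L^\infty$ bound for $\cR_{ii}T_\ga(g_A\circ\phi_A)$ supplied by Lemma~\ref{small.reisz.lem}, together with a bootstrap in the deformation exponent $m_A\sim\log\|D\Phi\|_\infty$ that closes the circular dependence between the size of $\|D\Phi\|_\infty$ and the symmetry error. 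This bootstrap is where the bulk of the quantitative work lies.
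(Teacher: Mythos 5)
Your proposal follows essentially the same route as the paper: the same decomposition $\om=\om_f+\om_g$, the same co-moving frame with the Taylor expansion of the far-field velocity into constant, diagonal-linear and quadratic parts, the same use of Lemma \ref{supp.sol.lemma} for confinement and of Lemma \ref{small.reisz.lem} to control the symmetry-breaking terms $\cR_{11}T_\ga\Om$, $\cR_{22}T_\ga\Om$, and your ``bootstrap in the deformation exponent'' is precisely the paper's contradiction-plus-continuation argument that closes the circle between $\norm{D\Phi}_\infty$ and the quadrant-preservation error. One small correction: the symmetry error must be shown to be small \emph{relative to} $|y|$ (the paper obtains $|e(y,t)|\lesssim M_A^4|y|/\sqrt{\ln\ln A}$ with $M_A=\ln^{1/4}\ln\ln\ln A$), not merely smaller than $2^{-A}$ in absolute terms, since $\supp(g_A)$ contains points at scales as fine as $2^{-A\ln A}$.
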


\begin{proof}
Suppose that \eqref{lld.cpt.ineq.12} doesn't hold true. i.e., 
\[
\max_{0\leq t\leq \frac 1{\ln\ln\ln A}}\norm{D\phi(\cdot,t)}_{L^\infty(|x|\leq 10\cdot 2^{-A})}\leq \ln^{\frac 14}\ln\ln\ln A.
\]
 First, we decompose the solution $\om$ into $\om_f$ and $\om_g$, where $\om_g$ solves
	\[
	\begin{cases}
	\pa_t \om_g+\na^{\perp}\Del^{-1}T_{\ga}\om_g \cdot \na \om_g+\na^{\perp}\Del^{-1}T_{\ga}\om_f \cdot \na \om_g =0\\
	\om_g|_{t=0} = g_A.
	\end{cases}
	\]
	Since both $f$ and $g_A$ are odd in $x_2$, so are $\om$ and $\om_g$. Also, we have 
	\[
	\phi_1(x_1,-x_2,t) = \phi_1(x_1,x_2,t), \quad
	\phi_2(x_1,-x_2,t) = -\phi_2(x_1,x_2,t)
	\]
	and therefore $\phi_2(x_1,0,t)=0$ for any $x_1\in \R$ and $t\geq 0$. Let $a(t)=\phi_1(0,0,t)$. Then, it satisfies
	\[
	\begin{cases}
	a'(t) = -\pa_2\Del^{-1}T_{\ga}\om(a(t),0,t)\\
	a(0) = 0.
	\end{cases}
	\] 
	
	Similar to \eqref{condition.decomp.f}-\eqref{condition.decomp.fg}, we can easily see that the supports of $\om_f$ and $\om_g$ are apart from each other for a short time. Indeed, on $[0,t_A]$, $t_A = \frac 1{\ln\ln\ln A}$, 
	\EQN{
	\supp(\om_f(\cdot,t))&\subset B\bke{\supp(f), \frac 18 R_0}\subset\left\{x_1\leq -\frac{15}8 R_0\right\},\\ 
		\supp(\om_g(\cdot,t))&\subset B\bke{\supp(g_A), \frac 18 R_0}\subset B\bke{0,\frac 14 R_0},
	}
	provided that $A$ is sufficiently large. It follows that $\na^{\perp}\De^{-1}T_\ga \om_f$ is smooth and has Sobolev norm bounds on $B(0, \frac 14 R_0)\times [0,t_A]$, where the bounds depend only on $f$ and $R_0$. Therefore, we can expand it at the point $(a(t),0)$, which is in $B(0, \frac 18 R_0)$ for $0\leq t\leq t_A$, to get 
	\EQN{
		\na^{\perp}\Del^{-1}T_{\ga}\om_f(a(t)+y_1,y_2,t)
		&=\begin{pmatrix}
			a'(t)+\pa_{2}\Del^{-1}T_{\ga}\om_g(a(t),0,t)\\
			0
		\end{pmatrix}
		+b(t)
		\begin{pmatrix}
			-y_1 \\ y_2
		\end{pmatrix}
		+E(y,t)
	}
for any $(a(t)+y_1,y_2,t)\in B(0,\frac 14 R_0)\times [0,t_A]$. Here, $b(t) = \pa_{12}\Del^{-1}T_{\ga}\om_f(a(t),0,t)$ has a bound $|b(t)|\leq B_1$ for some $B_1=B_1(R_0,f)$ and a divergence-free vector $E$ can be chosen  
satisfying
\[
|E(y,t)|\leq B_2|y|^2, \quad
|DE(y,t)|\leq B_2|y|, \quad
|D^2E(y,t)|\leq B_2, \quad\forall y\in \R^2,
\]
for some $B_2=B_2(R_0,f)$. In the expansion, we use the oddness of $\om_f$ in $x_2$ and 
\[
\pa_1\De^{-1}T_{\ga}\om_f (a(t),0,t) 
= \pa_{11}\De^{-1}T_{\ga}\om_f (a(t),0,t) 
= \pa_{22}\De^{-1}T_{\ga}\om_f (a(t),0,t) =0.
\]	

We do the change of variables $(x_1,x_2,t) = (a(t)+y_1,y_2,t)$ and denote the solution in a new coordinate system $(y,t)$ by $\Om(y,t)= \om_g(a(t)+y_1,y_2,t) = \om_g(x_1,x_2,t)$. Then, the equation for $\Om$ on $\R^2\times[0,t_A]$ can be written as
\[
\begin{cases}
\pa_t \Om+\left(\na^{\perp}\Del^{-1}T_{\ga}\Om + B + E -C\right) \cdot\na \Om =0\\
\Om|_{t=0} = g_A,
\end{cases}
\] 
where $B$ and $C$ are
\[
B=b\begin{pmatrix}
-y_1\\ y_2
\end{pmatrix},
\quad
C=\begin{pmatrix}
-\pa_2\Del^{-1}T_{\ga}\Om(0,0,t) \\ 0
\end{pmatrix}.
\]
Also, we let $\Phi$ be a characteristic in a new coordinate, which solves
\[
\begin{cases}
\pa_t \Phi(y,t) = \left(\na^{\perp}\Del^{-1}T_{\ga} \Om 
+B
+E
-C\right)(\Phi(y,t),t)
\\
\Phi(y,0) = y. 
\end{cases}
\]

\medskip

From now on, without mentioning, we only consider $t\in [0,t_A]$.

We can easily check that $\phi^{-1}(x,t) = \Phi^{-1}(y,t)$ and $\Phi^{-1}(0,t) = \phi^{-1}(a(t),0,t) = 0$. Furthermore, $\Phi^{-1}$ satisfies
\[
\Phi_1^{-1}(y_1,y_2,t)=\Phi_1^{-1}(y_1,-y_2,t), \quad
\Phi_2^{-1}(y_1,y_2,t)=-\Phi_2^{-1}(y_1,-y_2,t).
\]
By Lemma \ref{supp.sol.lemma}, on the other hand, we have 
\[
|\Phi(y,t)|\le 4\cdot 2^{-A}, \quad\forall |y|\leq 2\cdot 2^{-A},
\]
for sufficiently large $A$. Also, if $|y|\le 4\cdot 2^{-A}$ and $t_A$ is sufficiently small, by finite speed propagation, $|\phi^{-1}(a(t)+y_1,y_2,t)|\leq 10\cdot 2^{-A}$. It follows that 
\EQ{\label{bdd.deform.mt}
\max_{0\leq t\leq t_A}\norm{D\Phi(\cdot,t)}_{L^\infty(|y|\le 2\cdot 2^{-A})}&\le 
\max_{0\leq t\leq t_A}\norm{D(\Phi^{-1})(\cdot,t)}_{L^\infty(|y|\le 4\cdot 2^{-A})}\\
&\le \max_{0\leq t\leq t_A}\norm{D\phi(\cdot,t)}_{L^\infty(|x|\le 10\cdot 2^{-A})}\\
&\le \ln^{\frac 14}\ln\ln\ln A = M_A.
}
Indeed, $(D\Phi(x,t))^{-1}=D(\Phi^{-1})(\Phi(x,t))$ and $(D\phi(x,t))^{-1}=D(\phi^{-1})(\phi(x,t))$ are used in the first and second inequalities, respectively. 

Then, by Lemma \ref{small.reisz.lem} with $\phi=\Phi^{-1}$, we have
\EQ{\label{small.Riesz1}
\sup_{0\leq t \leq t_A}\norm{\cR_{11}T_\ga \Om(\cdot,t)}_\infty +\norm{\cR_{22}T_\ga \Om(\cdot,t)}_\infty \leq \frac {C_\ga M_A}{\sqrt{\ln\ln A}} 
}
for $\cR_{ij}\om = \De^{-1}\pa_{ij}\om$ and for some constant $C_\ga>0$ depending only on $\ga$.

Now, we find a lower bound of $D\Phi$ which makes a contradiction to \eqref{bdd.deform.mt}.
From the equation for $\Phi$, we get
\EQ{\label{eqn.Dphi.12}
\begin{cases}
\pa_t D\Phi(y,t) = \left(D\na^{\perp}\Del^{-1}T_{\ga} \Om 
+b\begin{pmatrix}
-1 & 0\\  0 & 1
\end{pmatrix}
+DE\right)(\Phi(y,t),t)D\Phi(y,t)
\\
D\Phi(y,0) = I, 
\end{cases}
} 
and the derivative of the velocity can be rewritten as
\EQN{
D\na^{\perp}\Del^{-1}&T_{\ga} \Om 
+b\begin{pmatrix}
-1 & 0\\  0 & 1
\end{pmatrix}
+DE\\
&=\begin{pmatrix}
-\cR_{12}T_{\ga} \Om-b & 0\\
0			& \cR_{12}T_{\ga} \Om + b
\end{pmatrix} 
+ \begin{pmatrix}
0 & -\cR_{22}T_{\ga} \Om \\
\cR_{11}T_{\ga} \Om & 0
\end{pmatrix}
+ DE\\
&=\begin{pmatrix}
	-\cR_{12}T_{\ga} \Om -b & 0\\
	0			& \cR_{12}T_{\ga} \Om + b
\end{pmatrix} 
+P.
}
By Gr\"{o}nwall's inequality, we have
\EQ{\label{dphi.gr}
D\Phi&(y,t) 
= \exp \begin{pmatrix}
\int_0^t\la (y,s)-b(s)ds & 0\\
0			&\int_0^t -\la (y,s) + b(s) ds
\end{pmatrix}\\ 
&+
\int_0^t \exp \begin{pmatrix}
\int_\tau^t\la (y,s)-b(s)ds & 0\\
0			&\int_\tau^t -\la (y,s) + b(s) ds 
\end{pmatrix}
P(\Phi(y,\tau),\tau) D\Phi(y,\tau) d\tau
}
where $\la(y,t) =-\cR_{12}T_{\ga} \Om(\Phi(y,t),t)$. 

Since $|\Phi(y,t)|\le 4\cdot 2^{-A}$ for $|y|\le 2\cdot 2^{-A}$, $A\gg 1$, we have $|DE(\Phi(y,t),t)|\lec B_2 2^{-A}$. Combining \eqref{dphi.gr} with \eqref{bdd.deform.mt} and \eqref{small.Riesz1}, we obtain for $|y|\leq 2\cdot 2^{-A}$,
\[
\exp\left|\int_0^t\la (y,s)-b(s)ds\right|
\leq M_A + \frac{C_\gamma M_A^2}{\sqrt{\ln\ln A}}\max_{0\leq \tau\leq t}\exp\left(2\left|\int_0^\tau\la (y,s)-b(s)ds\right|\right).
\]
Then, by the continuation argument, we get
\[
\exp\left|\int_0^t\la (y,s)-b(s)ds\right| \leq 2M_A
\]
for sufficiently large $A$, so that we can consider the second term in \eqref{dphi.gr} as an error term. 

\medskip

The remaining analysis is similar to the proof of Proposition \ref{large.lagrangian}. 
Using $\Phi(0,t)=0$, it follows that for $|y|\le  2\cdot 2^{-A}$ 
\EQ{\label{approx.Phi.12}
\Phi(&y,t) 
= \Phi(y,t) - \Phi(0,t) = \int_0^1 \frac {\pa}{\pa \th} [\Phi(\th y,t)] d\th
= \left(\int_0^1  D\Phi(\th y,t) d\th \right) y\\
=& \left(y_1\int_0^1\exp \left( \int_0^t\la (\th y,s)-b(s)ds\right)d\th , 
y_2\int_0^1\exp \left( -\int_0^t\la (\th y,s)-b(s)ds\right)d\th \right) + e
}
where 
\[
|e(y,t)|\lesssim_{\ga} \frac{M_A^4}{\sqrt{\ln\ln A}}|y|.
\]
Since $\frac 1{M_A} \gg \frac{M_A^4}{\sqrt{\ln\ln A}}$ if $A\gg 1$ and $y_1\sim y_2$ for $y=(y_1,y_2)\in \supp(g_A)$, it follows that for sufficiently large $A$, $\Phi$ has a sign preserving property; 
\[
\Phi_1(y,t)>0, \quad \Phi_2(y,t)>0, \qquad y\in \supp(g_A)\cap 
\{ y_1>0, y_2>0\}
\]
\[
\Phi_1(y,t)<0, \quad \Phi_2(y,t)>0, \qquad y\in \supp(g_A)\cap 
\{ y_1<0, y_2>0\}
\]
Based on this, we get
\EQN{
\la(0,t)  &= -\cR_{12}T_\ga \Om(\Phi(0,t),t) = -\cR_{12}T_\ga \Om(0,t)\\
&= \int_{\R^2} K(-z,t)\Om(z,t) dz
= 2\int_{z_2>0} K(z,t)\Om(z,t) dz
=2\int_{z_2>0} K(\Phi(z,t),t)g_A(z) dz\\
&\geq \int_{z_1>0,z_2>0} K(\Phi(z,t),t)g_A(z) dz
}
where $K$ is the kernel of the operator $-\pa_{12}\De^{-1}T_{\ga}$. The fourth equality follows from the parity of $K$ and $\Om$ in $z_2$. 
The last inequality follows from the positiveness of the integrand on $\{z_1<0, z_2>0\}$.  

Note that if $z\in \supp(g_A)\cap \{z_1>0,z_2>0\}$, we have
\[
\frac 12 <\frac{z_1}{z_2}< 2
\]
and hence by \eqref{approx.Phi.12} 
\[
\frac 1{10M_A^2} <\frac {\Phi_1(z,t)}{\Phi_2(z,t)} <10 M_A^2. 
\] 
Also, we have $\frac{|z|}{M_A}\leq |\Phi(z,t)|\leq M_A |z|$ for $z\in \supp(g_A)$. 
Then, by Lemma \ref{estimate.K} and Lemma \ref{estimate.tdK}, we get
\EQN{
\int_{z_1>0,z_2>0} &K(\Phi(z,t),t)g_A(z) dz\\
&\gtrsim_\ga \int_{z_1>0,z_2>0}\frac{\Phi_1(z,t)\Phi_2(z,t)}{|\Phi(z,t)|^4} \ln^{-\ga}\left(e + \frac 1{|\Phi(z,t)|}\right) e^{-|\Phi(z,t)|^2} g_A(z)dz \\
&\geq \frac 1{M_A^2}\int_{z_1>0,z_2>0}
\frac{1}{\frac{\Phi_1(z,t)}{\Phi_2(z,t)}+\frac{\Phi_2(z,t)}{\Phi_1(z,t)}}\cdot \frac{1}{|z|^2} \ln^{-\ga}\left(e + \frac {M_A}{|z|}\right) e^{-M_A^2|z|^2} g_A(z)dz \\
&\gtrsim \frac {e^{\frac 34M_A^4}}{M_A^4(1+\ln(1+M_A))^{\ga}}e^{-M_A^4} \int_{z_1>0,z_2>0}\frac{1}{|z|^2} \ln^{-\ga}\left(e + \frac 1{|z|}\right) e^{-|z|^4} g_A(z)dz\\
&\gtrsim  e^{-M_A^4} \int_{z_1>0,z_2>0}\frac{1}{|z|^2} \ln^{-\ga}\left(e + \frac 1{|z|}\right) e^{-|z|^4} g_A(z)dz \\
&\gtrsim_{\ga} \frac {\sqrt{\ln\ln A}}{\ln\ln\ln A} e^{-M_A^4}
= \frac {\sqrt{\ln\ln A}}{(\ln\ln\ln A)^2},
}
provided that $A\gg 1$. Therefore, we get 
\EQN{
\max_{0\leq t\leq t_A}\norm{D\Phi(\cdot, t)}_{L^\infty(|y|\le 2\cdot 2^{-A})} 
&\geq \max_{0\leq t\leq t_A}|D\Phi(0,t)| \\
&\geq \exp\left(\left(\frac {C_{\ga}\sqrt{\ln\ln A}}{(\ln\ln\ln A)^2}-B_1\right)\frac 1{\ln\ln\ln A}\right) - 1,
}
which makes a contradiction to \eqref{bdd.deform.mt}
\[
\max_{0\leq t\leq t_A}\norm{D\Phi(\cdot, t)}_{L^\infty(|y|\le 2\cdot 2^{-A})}  
\leq \ln^{\frac 14}\ln\ln\ln A 
\]
for sufficiently large $A$. 
\end{proof}

Now, we give a proof of the main proposition. 

\noindent\textit{Proof of Proposition \ref{prop.cpt}.}  

\noindent\texttt{Step 1.}  Critical norm inflation of a local solution. 

By Lemma \ref{lld.cpt}, we can create a large Lagrangian deformation \eqref{lld.cpt.ineq.12} at the presence of $f$ satisfying \eqref{prop.f}. 
Then, similar to Proposition \ref{H1.norm.inflation.prop}, we can find a perturbed initial data $\td g_A\in C_c^{\infty}(|x|\lec 2^{-A})$ from $g_A$ such that it satisfies 
\EQN{
		\td g_A(x_1,x_2) &= -\td g_A(x_1,-x_2),\\
		\norm{\td g_A}_{\dot{H}^1}+ \norm{\td g_A}_\infty + \norm{\td g_A}_1 &+ \norm{\td g_A}_{\dot{H}^{-1}} \lec \ln^{-\frac 18}\ln\ln\ln A, 
}
and the smooth solution $\td\om^{(A)}$ to
\[
\begin{cases}
\pa\td\om^{(A)} + \na^{\perp}\Del^{-1}T_{\ga}\td\om^{(A)} \cdot \na \td\om^{(A)} =0\\
\td\om^{(A)}|_{t=0} = f+ \td g_A
\end{cases}
\]
has a decomposition $\td\om^{(A)} = \td\om^{(A)}_f+ \td\om^{(A)}_{\td g}$ where $\td\om^{(A)}_g$ satisfies 
\[
\max_{0\leq t\leq \frac 1{\ln\ln\ln A}}\norm{\na \td\om^{(A)}_g(\cdot,t)}_{2}
\geq \ln^{\frac 1{12}}\ln\ln\ln A,
\]
for sufficiently large $A$. Indeed, $\td\om^{(A)}_g$ solves
\[
\begin{cases}
\pa\td\om^{(A)}_g + \na^{\perp}\Del^{-1}T_{\ga}\td\om^{(A)} \cdot \na \td\om^{(A)}_g =0\\
\td\om^{(A)}_g|_{t=0} = \td g_A.
\end{cases}
\]

Then, we construct $g$ by choosing $A_0=A_0(\ep_0)\gg 1$ such that $g=\td g_{A_0}\in C_c^\infty(B(0,\ep_0))$ and $\td\om_g=\td\om^{(A_0)}_g$ satisfies 
\[
\norm{g}_{\dot{H}^1} + \norm{g}_\infty + \norm{g}_1+ \norm{g}_{\dot{H}^{-1}} <\ep_0,
\]
and
\[
\max_{0\leq t\leq \frac 1{\ln\ln\ln A_0}}\norm{\na \td\om_{g}(\cdot,t)}_{2}
>\frac 2{\ep_0}. 
\]
In particular, we can find $0<t_0\leq \frac 1{\ln\ln\ln A_0}< \ep_0$ such that
\EQ{\label{norm.inflation.Omg}
\norm{\na \td\om_{g}(\cdot,t_0)}_{2}
>\frac 2{\ep_0}. 
}

\bigskip

\noindent\texttt{Step 2.} Patch a function $h$.

Suppose that $h$ satisfies \eqref{condi.h} and $\de<1$. Let $\om$ be a solution to
\[
\begin{cases}
\pa_t \om +\na^{\perp}\Del^{-1}T_{\ga}\om\cdot \na\om = 0\\
\om|_{t=0} = f + g+ h. 
\end{cases}
\]
We decompose $\om = \om_f+\om_g +\om_h$ where $\om_f$ and $\om_g$ are defined as solutions to
\[
\begin{cases}
\pa_t \om_f +\na^{\perp}\Del^{-1}T_{\ga}\om\cdot \na\om_f = 0\\
\om|_{t=0} = f 
\end{cases}
\]
and
\[
\begin{cases}
\pa_t \om_g+\na^{\perp}\Del^{-1}T_{\ga}\om\cdot \na\om_g = 0\\
\om|_{t=0} = g, 
\end{cases}
\]
respectively. Since  
\[
\norm{\na^{\perp}\De^{-1}T_\ga \om}_\infty
\lec \norm{\om}_1 + \norm{\om}_\infty
= \norm{\om|_{t=0}}_1 + \norm{\om|_{t=0}}_\infty
\lec 1 + \norm{f}_1 + \norm{f}_\infty,
\]
similar to \eqref{condition.decomp.f} and \eqref{condition.decomp.g}, we can easily check $\om_f$, $\om_g$ and $\om_h$ satisfies \eqref{supp.omfgh}, provided that $t_0$ is sufficiently small. If necessary, we can adjust the choice of $A_0$ to make $t_0$ small enough.  

Now, recall \eqref{est.ug}. By the assumption \eqref{condi.h} on $h$ and \eqref{supp.omfgh}, we have
\[
\norm{\na^{\perp}\Del^{-1}T_{\ga}\om_h(\cdot,t)}_{L^{\infty}(B(\supp(f),\frac 18R_0)\cup B(0,\ep_0+\frac 18R_0))} \lesssim_{R_0} \norm{h}_1 \leq \del
\]
for any $0\leq t\leq t_0$.
Then, by the same arguments in Lemma \ref{simple.patching.2}, we get 
\[
\norm{(\om_g-\td\om_g)(\cdot,t_0)}_{H^2}
\leq \max_{0\leq t\leq t_0}\norm{((\om_f+\om_g)-\td\om)(\cdot,t)}_{H^2}
\leq C(\norm{f}_{H^3},R_0,\supp(f)) \de
\leq \frac 1{\ep_0}, 
\]
provided that $\del\in (0,1)$ is sufficiently small. 
Combining with \eqref{norm.inflation.Omg}, we obtain the desired inflation \eqref{norm.inflation.omg}. 

\hfill$\square$

Before we prove Theorem \ref{thm.cpt}, we need the following lemma for the uniqueness. 

\begin{lemma}\label{cpt.uniq}
Suppose that $f\in C_c^{\infty}(\R^2)$ with the compact support in $B(0,R)$ for some $R>0$ and $g\in L^{\infty}(\R^2)\cap \dot{H}^{-1}(\R^2)$ with $\norm{g}_{\infty}\leq M$ for some $M>0$. Let $\td \om$ be a smooth solution to
\[
\begin{cases}
\pa_t \td\om + \na^{\perp}\Del^{-1}T_{\ga}\td\om\cdot\na\td\om =0\\
\td\om|_{t=0} = f
\end{cases}
\] 
and $\om$ be a weak solution in $C([0,1];L^{1}(\R^2)\cap L^{\infty}(\R^2))$ to
\[
\begin{cases}
\pa_t \om + \na^{\perp}\Del^{-1}T_{\ga}\om\cdot\na\om =0\\
\om|_{t=0} = f+g
\end{cases}
\] 
satisfying $L^\infty$-norm preservation
\[
\norm{\om(\cdot,t)}_{\infty} = \norm{f+g}_{\infty}, \quad\forall 0\leq t\leq 1.
\]
Then, for any $\ep>0$, we can find a constant $\del=\del(\ep,f, M)>0$ such that if $\norm{g}_{\dot{H}^{-1}}<\del$,  
\[
\max_{0\leq t\leq 1}\norm{(\om-\td\om)(\cdot,t)}_{\dot{H}^{-1}(\R^2)}<\ep.
\]
Furthermore, under the additional assumption $g\in C_c^{\infty}(B(0,R))$, we have $\td\del=\td\del(\ep,R,f)>0$ such that if $\norm{g}_{\infty}<\td\del$,  
\[
\max_{0\leq t\leq 1}\norm{(\om-\td\om)(\cdot,t)}_{L^{\infty}(\R^2)}<\ep.
\]

\end{lemma}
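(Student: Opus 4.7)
The plan is to prove both parts of the lemma by a Gronwall-type stability estimate for the difference $\xi=\om-\td\om$. Subtracting the two equations and using that $u=\na^{\perp}\Del^{-1}T_\ga\om$ and $\td u=\na^{\perp}\Del^{-1}T_\ga\td\om$, I get (in the distributional sense)
\begin{equation*}
\pa_t\xi+\td u\cdot\na\xi+(u-\td u)\cdot\na\om=0,\qquad \xi|_{t=0}=g.
\end{equation*}
The perturbation velocity $u-\td u=\na^{\perp}\Del^{-1}T_\ga\xi$ is divergence-free, and since the Fourier symbol of $T_\ga$ is bounded by $1$, Plancherel yields $\norm{u-\td u}_2\le\norm{\xi}_{\dot{H}^{-1}}$; Hardy--Littlewood--Sobolev gives $\norm{u-\td u}_\infty\le C_0(\norm{\xi}_1+\norm{\xi}_\infty)$.

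For the $\dot{H}^{-1}$ bound I would test the equation against $\Psi=(-\Del)^{-1}\xi$ so that $\norm{\xi}_{\dot{H}^{-1}}^2=\norm{\na\Psi}_2^2$. The transport piece, after integration by parts using $\na\cdot\td u=0$, reduces to $-\int(\na\Psi)^{\top}(\na\td u)\na\Psi\,dx$, bounded by $\norm{\na\td u}_\infty\norm{\xi}_{\dot{H}^{-1}}^2$; the smoothness of $\td\om$ (in fact $\td\om\in C_c^{\infty}(\R^2)$ uniformly on $[0,1]$) controls $\norm{\na\td u}_\infty$ by a constant $C(f)$. The source term becomes $-\int\om\,(u-\td u)\cdot\na\Psi\,dx$, whose modulus is at most $\norm{\om}_\infty\norm{u-\td u}_2\norm{\na\Psi}_2\le(\norm{f}_\infty+M)\norm{\xi}_{\dot{H}^{-1}}^2$ by the Fourier bound and the assumed $L^\infty$-preservation. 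Gronwall then gives $\norm{\xi(t)}_{\dot{H}^{-1}}\le e^{C(f,M)}\norm{g}_{\dot{H}^{-1}}$ on $[0,1]$, so that $\de=\ep e^{-C(f,M)}$ works.

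For the $L^\infty$ bound under the compactness assumption $g\in C_c^{\infty}(B(0,R))$ I would exploit finite propagation. Since $\norm{g}_\infty<\td\de\le 1$ and $\supp g\subset B(0,R)$ give $\norm{g}_1\le\pi R^2$, the quantity $\norm{\om_0}_{L^1\cap L^\infty}$ is controlled by $f$ and $R$, hence so is $\norm{u}_\infty$ on $[0,1]$, and the supports of $\om(\cdot,t)$ and $\td\om(\cdot,t)$ stay inside a ball $B(0,R^*)$ with $R^*=R^*(f,R)$. This gives $\norm{\xi}_1\le\pi(R^*)^2\norm{\xi}_\infty$, and therefore $\norm{u-\td u}_\infty\le C(f,R)\norm{\xi}_\infty$. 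Because $\om\in L^1\cap L^\infty$ makes $u$ log-Lipschitz, the characteristic flow $\phi$ of $u$ exists and is unique, and the assumed $L^\infty$-preservation is equivalent to $\om(\phi(x,t),t)=\om_0(x)$. Computing $\frac{d}{dt}[\td\om(\phi(x,t),t)]=((u-\td u)\cdot\na\td\om)(\phi(x,t),t)$ and subtracting, I obtain
\begin{equation*}
\xi(\phi(x,t),t)=g(x)-\int_0^t\bigl((u-\td u)\cdot\na\td\om\bigr)(\phi(x,s),s)\,ds.
\end{equation*}
Taking the supremum in $x$ and using $\norm{\na\td\om}_{L^\infty_{x,t}}\le C(f)$ yields $\norm{\xi(t)}_\infty\le\norm{g}_\infty+C(f,R)\int_0^t\norm{\xi(s)}_\infty\,ds$, and Gronwall produces $\td\de=\ep e^{-C(f,R)}$.

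The main obstacle is that both estimates are \emph{formal} for an $L^1\cap L^\infty$ weak solution: the $\dot{H}^{-1}$ energy identity requires a time derivative and an integration by parts that are not a priori justified, while the Lagrangian identity for $\xi$ needs a well-defined characteristic flow of a merely log-Lipschitz velocity and requires that $\om$ is transported along it. I would handle both by mollifying $\om$ with a standard approximation of identity, running the identities above for the smoothed solution (where all commutator errors either vanish because $\td u$ is smooth or are controlled in $L^2$ and $L^\infty$ by $\xi$ alone), and passing to the limit using the uniform bounds $\norm{\om}_{L^1\cap L^\infty}\le C(f,M)$ together with the $C([0,1];L^1\cap L^\infty)$ regularity provided by the hypothesis.
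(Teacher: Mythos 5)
Your proposal is correct and follows essentially the same route as the paper: an $\dot{H}^{-1}$ energy estimate for $\om-\td\om$ exploiting the antisymmetry of the transport term (your direct integration by parts against $(-\De)^{-1}\xi$ is the same computation the paper packages as a commutator estimate with $\La$), the bound $\norm{\na^{\perp}\De^{-1}T_\ga\xi}_2\le\norm{\xi}_{\dot H^{-1}}$ for the source term, and for the $L^\infty$ part the integral inequality along characteristics combined with $\norm{\na^{\perp}\De^{-1}T_\ga\xi}_\infty\lesssim\norm{\xi}_1^{1/2}\norm{\xi}_\infty^{1/2}\lesssim_R\norm{\xi}_\infty$ via the support control. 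Your remark on mollifying to justify the formal identities for the merely $C([0,1];L^1\cap L^\infty)$ weak solution is a point the paper leaves implicit.
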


\begin{proof}
The equation for $\eta = \om-\td\om$ is 
\[
\begin{cases}
\pa_t\eta + \na^{\perp}\Del^{-1}T_{\ga}\eta \cdot \na \om + \na^{\perp}\Del^{-1}T_{\ga}\td\om \cdot \na \eta =0\\
\eta|_{t=0} = g. 
\end{cases}
\] 	

Taking $\int_{\R^2}\cdot\La^{-2}\eta dx$, $\La = (-\Del)^{\frac 12}$, on both side of the equation, we have
\EQN{
\frac 12 \ddt \norm{\eta}_{\dot{H}^{-1}(\R^2)}^2
&\leq \left|\int \om (\na^{\perp}\Del^{-1}T_{\ga}\eta \cdot \na) \La^{-2}\eta dx\right|
+\left|\int \eta (\na^{\perp}\Del^{-1}T_{\ga}\td\om \cdot \na)\La^{-2}\eta dx\right|\\
&\leq \norm{\om}_{\infty}\norm{\eta}_{\dot{H}^{-1}(\R^2)}^2
+\norm{\La^{-1}\eta}_2\norm{[\La,\na^{\perp}\Del^{-1}T_{\ga}\td\om\cdot \na] \La^{-2}\eta}_2\\
&\lesssim (\norm{f}_{\infty}+\norm{g}_{\infty}
+\norm{D\na^\perp\De^{-1}T_{\ga}\td\om}_{\infty})\norm{\eta}_{\dot{H}^{-1}}^2.
}
 
Here, the second inequality follows from 
\[
\int \La^{-1}\eta (\na^{\perp}\Del^{-1}T_{\ga}\td\om\cdot \na)\La^{-1}\eta dx 
=
\frac 12\int (\na^{\perp}\Del^{-1}T_{\ga}\td\om\cdot \na)|\La^{-1}\eta|^2 dx 
=0
\]
and the third one from the commutator estimate
\[
\norm{\La(lm)-l(\La m)}_2 
\lesssim \norm{Dl}_{\infty}\norm{m}_2.
\] 
By Gr\"{o}nwall inequality, we have 
\[
\max_{0\leq t\leq 1}\norm{\eta(\cdot,t)}_{\dot{H}^{-1}(\R^2)}
\leq \norm{g}_{\dot{H}^{-1}(\R^2)}\exp(C(\norm{f}_{\infty}
+\norm{f}_{W^{1,4}(\R^2)}+M)).
\]
Therefore, for given $\ep>0$, we can find the desired $\del=\del(\ep, f,M)$.

Now, we further assume that $g$ is in $C_c^{\infty}(B(0,R))$. Then, the weak solution $\om$ becomes a smooth solution. The equation for $\eta$ can be rewritten as
\[
\pa_t \eta + \na^{\perp}\Del^{-1}T_{\ga}\om \cdot \na \eta +\na^{\perp}\Del^{-1}T_{\ga}\eta \cdot \na \td\om =0,
\] 
so that we have
\EQ{\label{est.eta.infty}
\norm{\eta(\cdot,t)}_{\infty}\leq \norm{g}_{\infty} + \int_0^t \norm{(\na^{\perp}\Del^{-1}T_{\ga}\eta)(\cdot,s)}_{\infty}\norm{\na \td\om(\cdot,s)}_{\infty} ds.
}

By the usual energy estimate, we have
\[
\max_{0\leq t\leq 1}\norm{\na\td\om(\cdot,t)}_{\infty}\lesssim_f 1.
\]
Using $f,g\in C_c^\infty(B(0,R))$ and Lebesgue measure preservation  of the supports $\om$ and $\td \om$,
\EQN{
\norm{(\na^{\perp}\Del^{-1}T_{\ga}\eta)(\cdot,s)}_{\infty}
&\lesssim 
\norm{\eta(\cdot,t)}_1^{\frac 12}\norm{\eta(\cdot,t)}_{\infty}^{\frac 12}\\
&\leq 
(|\supp(\om(\cdot,0))| + |\supp(\td \om(\cdot,0))|)^\frac12 \norm{\eta(\cdot,t)}_{\infty}
\lesssim R \norm{\eta(\cdot,t)}_{\infty}. 
}

Then, combining with \eqref{est.eta.infty} and using Gr\"{o}nwall inequality, we have
\[
\max_{0\leq t\leq 1}\norm{\eta(\cdot,t)}_{\infty}
\leq C(R,f)\norm{g}_{\infty}.
\]

This completes the proof.
\end{proof}

\bigskip
Finally, we find the compactly supported perturbation in our main theorem. 

\

\noindent\textit{Proof of Theorem \ref{thm.cpt}.}
Fix $0<\ep<\frac 1{200}$. Without loss of generality, we may assume the support of the given initial data lies on $\{x=(x_1,x_2): x_1\leq -1\}\cap B(0,\bar{R})$ for some $\bar{R}\geq 10$. (Otherwise, using translation invariant property of the solution, we apply the proof for a suitably translated initial data in $x_1$ direction. Note that the translated one is still odd in $x_2$.) Let $\{x_n=(x_n^1,0)\}$ be a sequence of centres with
\[
x_1^1 = 0, \qquad x_n^1 = \sum_{j=1}^{n-1} \frac 1{2^j} \quad\text{for } n\geq 2.
\]

Now, we construct sequences $\{\zeta_n\}_{n\in \N}\subset C_c^{\infty}(B(0,2^{-(n+1)}))$, $\{(\del_n,\td\del_n, t_n)\}_{n\in \N}\subset \R_+^3$ such that for any $n\in \N$, 
\begin{itemize}
	\item $\zeta_n$ is odd in $x_2$ and satisfies
	\EQN{
	\norm{\zeta_n} \equiv 
	\norm{\zeta_n}_{\dot{H}^1}+\norm{\zeta_n}_{\infty}&+\norm{\zeta_n}_1+\norm{\zeta_n}_{\dot{H}^{-1}} <\min\left(\frac {\ep}{2^n},\frac{\del_{n-1}}{2^{n-1}},\frac{\td\del_{n-1}}{2^{n-1}}\right),
	}
	where $\del_0=\td\del_0 = 1$. 
	\item for any $h\in C_c^{\infty}(\R^2)$ with 
	\EQ{\label{condition.h}
	\supp(h)&\subset \{x=(x_1,x_2):x_1\geq \frac 1{2^{n+1}}\}\\
	&\norm{h}_1 +\norm{h}_{\infty}\leq \del_n,
	}
	a smooth solution $\om$ to \eqref{main.eq} for the initial data 
	\[
	\om|_{t=0}(x) = a(x+x_n)+\sum_{j=1}^{n-1}\zeta_j(x-x_j+x_n) + \zeta_n(x) + h(x)
	\]
	has a decomposition
	\[
	\om = \om_{\leq n-1} + \om_n + \om_h
	\]
	such that the supports of $\om_{\leq n-1}$, $\om_n$, and $\om_h$ are disjoint for $t\in [0,t_n]$, and 
	\[
	\norm{\om_n(\cdot,t_n)}_{\dot{H}^1}>2^n. 
	\]
	\item $\{\del_n\}$ and $\{\td\del_n\}$ are decreasing sequences. Also, $t_n$ converges to $0$.  
	
	\item for any $g$ satisfying $\norm{g}_{\infty}\leq 1$ and $\norm{g}_{\dot{H}^{-1}}\leq \td\del_n$, 
	\EQ{\label{neg.sob.uniq}
	\max_{0\leq t\leq 1}\norm{(\td\om-\td\om_{\leq n})(\cdot,t)}_{\dot{H}^{-1}}<\frac 1{2^n}.
}
	where $\td\om\in C([0,1];L^1(\R^2)\cap L^\infty(\R^2))$ is a weak solution having $L^{\infty}$-norm preservation and $\td\om_{\leq n}$ is a smooth solution to \eqref{main.eq} for initial data
	\EQ{\label{ini.td.w}
	\td\om|_{t=0}(x) = a(x) + \sum_{j=1}^{n}\zeta_j(x-x_j)  + g,
	\quad \td\om_{\leq n} |_{t=0} = a(x) + \sum_{j=1}^{n}\zeta_j(x-x_j).
	}
	Furthermore, if $g \in C_c^{\infty}(B(0,\bar{R}))$ with $\norm{g}_{\infty}\leq \td\del_n$, we have
	\EQN{
	\max_{0\leq t\leq 1}\norm{(\td\om-\td\om_{\leq n})(\cdot,t)}_{\infty}<\frac 1{2^n}.
}
	 
\end{itemize}

The construction is based on induction. First, we choose $\zeta_1$, and $(\del_1,\td\del_1,t_1)$. By Proposition \ref{prop.cpt} with
\[
f = a(x)=a(x+x_1), \quad R_0 = \frac 14, \quad \ep_0 = \frac{\ep}{2},
\]
there exist an smooth function $\zeta_1$ odd in $x_2$ and compactly supported in $B(0,\frac 14)$, and positive constants $0<\del_1<\del_0$ and $0<t_1<\frac 12$ which satisfy the following:
\begin{itemize}
	\item \[
	\norm{\zeta_1}<\frac{\ep}{2}
	\]
	\item If $h\in C_c^{\infty}(\R^2)$ satisfies
	\EQN{
	\supp(h)&\subset\{x=(x_1,x_2)\in \R^2: x_1 \geq \frac 14 \},\\
	&\norm{h}_1+\norm{h}_{\infty}\leq \del_1,
	}		
	a smooth solution $\om$ to \eqref{main.eq} for the initial data
	\[
	\om|_{t=0} = a(x)+\zeta_1(x)+ h(x)
	\]
	has a decomposition 
	\[
	\om = \om_{a} + \om_1 + \om_h, \quad\text{ on }\R^2\times[0,t_1]
	\]
	such that the supports of $\om_a$, $\om_1$, and $\om_h$ are disjoint for $t\in [0,t_1]$ and 
	\[
	\norm{\om_1(\cdot,t_1)}_{\dot{H}^1}>2.
	\]
\end{itemize} 
Then, we apply Lemma \ref{cpt.uniq} for $f=a+\zeta_1$, $R=\bar{R}$, $M=1$, and $\ep=\frac 12$, so that obtain $0<\td\del_1\leq \td\del_0$ such that if $\norm{g}_{\infty}\leq 1$, and $\norm{g}_{\dot{H}^{-1}}\leq \td\del_1$, then we have
\[
\max_{0\leq t\leq 1}\norm{(\td\om-\td\om_{\leq 1})(\cdot,t)}_{\dot{H}^{-1}(\R^2)}<\frac 12,
\]
where $\td\om$ and $\td\om_{\leq 1}$ are solutions to \eqref{main.eq} for the initial data
\[
\td\om|_{t=0} = a + \zeta_1 + g, \quad \td\om_{\leq 1}|_{t=0} = a+\zeta_1. 
\]
Furthermore, if $g\in C_c^{\infty}(B(0,\bar{R}))$ satisfies $\norm{g}_{\infty}\leq \td\del_1$, then we have
\[
\max_{0\leq t\leq 1}\norm{(\td\om-\td\om_{\leq 1})(\cdot,t)}_{\infty}<\frac 12,
\] 
Therefore, we obtain the desired $\zeta_1$ and $(\del_1,\td\del_1,t_1)$.

Assume that we have $\{\zeta_j\}_{j=1}^n$ and $\{(\de_j,\td\de_j,t_j)\}_{j=1}^n$ satisfying all conditions above. Then, applying Proposition \ref{prop.cpt} for 
\[
f=a(x+x_{n+1})+\sum_{j=1}^{n}\zeta_j(x-x_j+x_{n+1}), \quad
R_0=\frac 1{2^{n+2}}, \quad
\ep_0=\min\left(\frac {\ep}{2^{n+1}},\frac{\del_n}{2^n} ,\frac{\td\del_n}{2^n} \right),
\]
we can find $\zeta_{n+1}\in C_c^{\infty}(B(0,2^{-(n+2)}))$ odd in $x_2$, and $0<\del_{n+1}\leq \del_n$ and $0<t_{n+1}<\frac {1}{2^{n+1}}$ such that
\begin{itemize}
	\item \[
	\norm{\zeta_{n+1}} 
	<\min\left(\frac {\ep}{2^{n+1}},\frac{\del_n}{2^n} ,\frac{\td\del_n}{2^n} \right).
	\]
	\item for any $h\in C_c^{\infty}(\R^2)$ with 
	\EQN{
		\supp(h)&\subset \{x=(x_1,x_2):x_1\geq \frac 1{2^{n+2}}\}\\
		&\norm{h}_1 +\norm{h}_{\infty}\leq \del_{n+1},
	}
	the smooth solution $\om$ to \eqref{main.eq} for the initial data 
	\[
	\om|_{t=0}(x) = a(x+x_{n+1})+\sum_{j=1}^{n}\zeta_j(x-x_j+x_{n+1}) + \zeta_{n+1}(x) + h(x)
	\]
	has a decomposition
	\[
	\om = \om_{\leq n} + \om_{n+1} + \om_h, \quad \text{ on }\R^2\times[0,t_{n+1}] 
	\]
	such that the supports of $\om_{\leq n}$, $\om_{n+1}$, and $\om_h$ are disjoint for $t\in [0,t_{n+1}]$, and
	\[
	\norm{\om_{n+1}(\cdot,t_{n+1})}_{\dot{H}^1}> {2^{n+1}}. 
	\]
\end{itemize}

Once we obtain $\zeta_{n+1}$, applying Lemma \ref{cpt.uniq} for $f(x) = a(x) + \sum_{j=1}^{n+1} \zeta_j(x-x_j)$, $R=\bar{R}$, $M=1$, and $\ep={2^{-(n+1)}}$, we can find $0<\td\del_{n+1}\leq \td\del_n$ such that for any $g$ with $\norm{g}_{\infty}\leq 1$ and $\norm{g}_{\dot{H}^{-1}}\leq \td\del_{n+1}$, we have
\[
\max_{0\leq t\leq 1}\norm{(\td\om-\td\om_{\leq n+1})(\cdot,t)}_{\infty} <\frac 1{2^{n+1}},
\]
where $\td\om$ and $\td\om_{\leq n+1}$ solves \eqref{main.eq} for the initial data
\[
\td\om|_{t=0} = a+ \sum_{j=1}^{n+1} \zeta_j(\cdot-x_j)+g, \quad
\td\om_{\leq n+1}|_{t=0} = a+\sum_{j=1}^{n+1} \zeta_j(\cdot-x_j).
\]
If $g$ further satisfies $g\in C_c^{\infty}(B(0,\bar{R}))$ and $\norm{g}_{\infty}\leq \td\del_{n+1}$, we get
\[
\max_{0\leq t\leq 1}\norm{(\td\om-\td\om_{\leq n+1})(\cdot,t)}_{\infty} <\frac 1{2^{n+1}}.
\]
Therefore, by the induction argument, we obtain the desired sequences $\{\zeta_n\}$, $\{(\del_n,\td\del_n, t_n)\}$.

Now, we set the perturbation as
\[
\zeta(x) = \sum_{j=1}^{\infty} \zeta_j(x-x_j).
\]
Obviously, the perturbation satisfies
\[
\norm{\zeta}_{\dot{H}^1}+\norm{\zeta}_{\infty}+\norm{\zeta}_1+\norm{\zeta}_{\dot{H}^{-1}}
=\norm{\zeta}\leq \sum_{j=1}^{\infty}\norm{\zeta_j}< \ep.
\]

Since $\zeta_{n+1}(\cdot-x_{n+1})\in C_c^{\infty}(B(0,\bar{R}))$ and $\norm{\zeta_{n+1}}_{\infty}\leq \norm{\zeta_{n+1}}\leq \td\del_n$, we plug $g=\zeta_{n+1}(\cdot-x_{n+1})$ into \eqref{ini.td.w}
to get 
\EQ{\label{cauchy.cc}
\max_{0\leq t\leq 1}\norm{(\td\om_{\leq n+1}-\td\om_{\leq n})(\cdot,t)}_{\infty}<\frac 1{2^{n}}.
}
Indeed, for any $n\in \N$, $\zeta_{n}(\cdot-x_{n})\in C_c^{\infty}(B(0,\bar{R}))$, and by finite speed propagation we have $\td\om_{\le n} \in C([0,1]\times\overline{B(0,R_*)})$ for some finite number $R_*$. Then, \eqref{cauchy.cc} implies that $\{\td\om_{\leq n}\}$ is Cauchy in $C([0,1]\times \overline{B(0,R_*)})$, and hence we have its limit $\om\in  C([0,1];C_c(\R^2))$.  On the other hand, since $L^\infty$-norm of $\td\om_{\leq n}$ is preserved for any $n\in \N$, so is that of $\om$. 

Now, we check that $\om$ is the unique weak solution in $C([0,1];L^1(\R^2)\cap L^{\infty}(\R^2))$ to the equation \eqref{main.eq} for the initial data
\EQ{\label{ic}
\om|_{t=0} = a + \zeta,
}
having $L^\infty$-norm preservation. Since $\td\om_{\leq n}$ is smooth solution to \eqref{main.eq}, it satisfies for any $\ph\in C^1([0,1];C_c^1(\R^2))$ and $n\in \N$, 
\[
\int_{\R^2} \td\om_{\leq n}(x,1)\ph(x,1) dx = 
\int_{\R^2} \td\om_{\leq n}(x,0)\ph(x,0) dx
+\int_0^1\int_{\R^2}(\pa_s\ph+\na^{\perp}\Del^{-1}T_{\ga}\td\om_{\leq n}\cdot\na\ph)\td\om_{\leq n} dxds.
\]
Sending $n$ to infinity, $\om$ solves \eqref{main.eq} in a weak sense.  Then the uniqueness follows from \eqref{neg.sob.uniq}. Indeed, for any weak solution $\bar{\om}\in C^1([0,1];L^1(\R^2)\cap L^\infty(\R^2))$ to \eqref{main.eq} for the same initial data with $\om$ having $L^\infty$-norm preservation, we have
\[
\max_{0\leq t\leq 1}\norm{\bar{\om}-\td\om_{\leq n}(\cdot,t)}_{\dot{H}^{-1}(\R^2)}<\frac1{2^n},
\] 
for sufficiently large $n$. Here, we use $\sup_{j}\norm{\zeta_j}_{\infty}\leq 1$ and 
\[
\sum_{j=n+1}^{\infty}\norm{\zeta_j(\cdot-x_j)}_{\dot{H}^{-1}(\R^2)}
<\sum_{j=n+1}^{\infty}\frac{\td\del_{j-1}}{2^{j-1}}
\leq \td \del_n \sum_{j=n+1}^{\infty}\frac{1}{2^{j-1}}\leq \td\del_n.
\]
Therefore, if the weak solution is not unique, i.e., $\bar\om\neq \om$, then it makes a contradiction to  
\[
\max_{0\leq t\leq 1}\norm{(\bar{\om}-\om)(\cdot,t)}_{\dot{H}^{-1}(\R^2)}<\frac1{2^{n-1}}, \quad\forall n\in \N.
\] 
Therefore, we obtain the uniqueness.  

Finally, since $h=\sum_{j=n+1}^{\infty}\zeta_j(x-x_j+x_n)$ satisfies the conditions \eqref{condition.h}, we have
\EQ{\label{this}
\norm{\om(\cdot,t_n)}_{\dot{H}^1}\geq \norm{\om_n(\cdot,t_n)}_{\dot{H}^1}> 2^n.
}
Indeed, in Proposition \ref{prop.cpt}, the assumption $h\in C_c^{\infty}(\R^2)$ can be dropped if we have 
a unique weak solution $\om\in C([0,1];L^1(\R^2)\cap L^\infty(\R^2))$ to \eqref{main.eq} with initial data $\om|_{t=0}=f+g+h$. This leads to \eqref{this}.
 
Using the continuity of $\norm{\om_n(\cdot,t)}_{H^1}$, we have short time interval $[t_n^l,t_n^r]$, $t_n^l\leq t_n\leq t_n^r$ such that $t_n^r$ converges to $0$ and 
\[
\norm{\om(\cdot,t)}_{\dot{H}^1}> n, \quad\forall t_n^l\leq t\leq t_n^r. 
\]
This implies the desired critical Sobolev norm inflation.

\hfill$\square$
\section{Appendix}
In this section, we provide proofs of some inequalities for self-containedness. 

\subsection{Kernel for the velocity}
In this section, we estimate the kernel $H$ in the velocity $u= \na^\perp \De^{-1} T_\ga\om = H\ast \om$.

\begin{lemma}\label{ker.H.lem} Let $\ga>0$ and $H$ is the kernel of the multiplier $\na^\perp\Del^{-1}T_\ga$, where $T_\ga$ is either 
\[
T_\ga = \ln^{-\ga}(e-\Del), \quad\text{or}\quad
T_\ga = \ln^{-\ga}(e+|\na|).
\] 
Then, for each $\al$ with $|\al|\geq 0$, we have 
\EQ{\label{est.H}
|\pa^\al H(x)|\lesssim_{\al}\frac 1{|x|^{|\al|+1}}, \qquad\forall x\neq 0.
}
\end{lemma}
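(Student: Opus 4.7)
The plan is to reduce \eqref{est.H} to the symbol bound
\[
|\partial_k^\beta m(k)|\lesssim_\beta |k|^{-1-|\beta|}, \qquad k\neq 0,
\]
for the multiplier symbol $m(k) = ik^\perp T_\gamma(|k|)/|k|^2$, and then convert that to the pointwise kernel estimate via a standard Littlewood--Paley decomposition. The decay $|x|^{-|\alpha|-1}$ is exactly what one gets from a Mihlin symbol of order $-1$ in dimension two, so no extra logarithmic gain is sought.

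First I would verify the symbol bound. The factor $k^\perp/|k|^2$ is homogeneous of degree $-1$ and smooth away from $0$, hence satisfies $|\partial_k^\beta(k^\perp/|k|^2)|\lesssim|k|^{-1-|\beta|}$. For the logarithmic factor, I would differentiate $T_\gamma$ directly. For $T_\gamma(|k|)=\ln^{-\gamma}(e+|k|^2)$, a single derivative produces $-\gamma\ln^{-\gamma-1}(e+|k|^2)\cdot 2k_j/(e+|k|^2)$, which is bounded by $C|k|^{-1}$ on $|k|\ge 1$ and is smooth (hence uniformly bounded) on $|k|\le 1$. Iterating, $|\partial_k^\beta T_\gamma(|k|)|\lesssim_\beta|k|^{-|\beta|}$ for all $k\neq 0$, and the Leibniz rule gives the claimed Mihlin bound on $m$. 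The case $T_\gamma=\ln^{-\gamma}(e+|k|)$ is completely analogous.

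Next I would insert a smooth dyadic cutoff. Pick a radial $\chi\in C_c^\infty(\{1/2<|k|<2\})$ with $\sum_{j\in\Z}\chi(2^{-j}k)=1$ for $k\neq 0$, and let $H_j$ be the kernel whose Fourier transform is $\chi(2^{-j}k)\widehat H(k)$, so that $H=\sum_j H_j$ in the distributional sense. The symbol $\chi(2^{-j}k)\widehat H(k)$ is supported on $|k|\sim 2^j$ and has, by the Mihlin bound just proved, $L^\infty$-bound $\lesssim 2^{-j}$ together with $|\partial^\beta|\lesssim 2^{-j(1+|\beta|)}$. Standard integration by parts in the inverse Fourier transform then yields, for every $N\geq 0$,
\[
|\partial^\alpha H_j(x)| \lesssim_{\alpha,N} 2^{j(|\alpha|+1)}\bigl(1+2^j|x|\bigr)^{-N}.
\]
Splitting the sum in $j$ at $2^j\sim 1/|x|$ and choosing $N>|\alpha|+1$, both tails sum to $\lesssim|x|^{-|\alpha|-1}$, which is \eqref{est.H}.

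The main technical point is really just the symbol computation, and the only mild annoyance is that a direct Calderón--Zygmund citation would give the $|\alpha|=0$ case but the higher-$|\alpha|$ case still needs the dyadic argument; handling both cases at once through the Littlewood--Paley route is cleanest. As an alternative (and a sanity check) one can instead start from the integral representations for $\widehat H$ derived exactly as in \eqref{pre.int.form} and \eqref{pre.int.form2}, take inverse Fourier transforms as in the proofs of Lemmas \ref{estimate.K} and \ref{estimate.tdK}, differentiate the heat kernel $\nabla^\perp e^{(s+\beta)\Delta}\delta_0$ in $x$ (each extra derivative produces a factor $x/(s+\beta)$ absorbed by Gaussian decay), rescale $s,\beta$ by $|x|^2$ as before, and bound the remaining triple integral by a constant; this produces the same bound $|\partial^\alpha H(x)|\lesssim|x|^{-1-|\alpha|}$ without appealing to multiplier theory.
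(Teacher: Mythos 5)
Your proposal is correct, but your primary route is genuinely different from the paper's. The paper never passes through multiplier theory: it writes the kernel explicitly in physical space as $H(x)=\frac{x^\perp}{|x|^2}H_r(x)$, where $H_r$ is the triple integral over $t,\be$ (and $\tau$) obtained from the Gamma-function and subordination identities already set up in Lemmas \ref{estimate.K} and \ref{estimate.tdK}; it then bounds $|\pa^\al H_r(x)|\lesssim_\al |x|^{-|\al|}$ by differentiating the factor $1-e^{-|x|^2/(4\be)}$ uniformly in $\be$ (using $|t^ne^{-t}|\le C(n)$) and noting that the remaining measure in $(t,\be)$ has total mass $1$, after which the Leibniz rule with $|\pa^\al(x_i/|x|^2)|\lesssim|x|^{-|\al|-1}$ finishes. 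This is essentially the ``alternative'' you sketch in your last paragraph, except the paper factors out $x^\perp/|x|^2$ first rather than differentiating the heat kernel repeatedly. Your main argument --- the Mihlin bound $|\pa_k^\be m(k)|\lesssim|k|^{-1-|\be|}$ plus a Littlewood--Paley decomposition and the standard summation over dyadic scales --- is a valid and more generic route: it requires no explicit kernel formula and would apply to any order-$(-1)$ Mihlin symbol. The trade-offs are that you must (briefly) justify that $\sum_j H_j$ agrees with $H$ away from the origin, and you should note that $\ln^{-\ga}(e+|k|)$ is not smooth at $k=0$, so the bound $|\pa^\be T_\ga(|k|)|\lesssim|k|^{-|\be|}$ there comes from $|\pa^\be|k||\lesssim|k|^{1-|\be|}$ rather than from smoothness; both points are routine and neither is a gap. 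The paper's approach buys an explicit kernel with no appeal to multiplier theory and stays consistent with the physical-space computations used elsewhere (e.g.\ in Lemma \ref{small.reisz.lem}); yours buys brevity and robustness at the symbol level.
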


\begin{proof}
By a similar argument in Lemma 
\ref{estimate.K} and Lemma \ref{estimate.tdK}, we have an explicit expression of the kernel $H^2$ of the multiplier $\na^\perp \De^{-1}\ln^{-\ga}(e-\De)$,
\begin{align}\label{H2}
H^2(x)=
\frac{C}{\Ga(\ga)}\frac{x^\perp}{|x|^2}\int_0^{\infty}\frac 1{\Ga(t)}\int_0^{\infty}e^{-e\be}(1-e^{-\frac {|x|^2}{4\be}})\be^t \frac{d\be}{\be} t^{\ga}\frac{dt}t =: \frac{x^\perp}{|x|^2} H_r^2(x)  
\end{align}
where $x^\perp = (-x_2,x_1)$ for some absolute constant $C>0$. 

Also, the kernel $H^1$ of the multiplier $\na^\perp \De^{-1}\ln^{-\ga}(e+|\na|)$ is
\EQ{\label{H1}
H^1(x)
&=
 \frac{\td C}{\Ga(\ga)}\frac{x^\perp}{|x|^2}\int_0^{\infty}\frac {1}{\Ga(t)} \int_0^{\infty}e^{-\tau}
	\int_0^{\infty} e^{-e\be}\left(1 -e^{-\frac{\tau|x|^2}{{\be^2}}}\right)  \be^t \frac{d\be}{\be}\tau^{-\frac 12}d\tau t^{\ga}\frac{dt}t,\\
&=: \frac{x^\perp}{|x|^2} H_r^1(x)
}
for some constant $\td C>0$.

Using $|t^ne^{-t}|\leq C(n)$ for any $t\geq 0$, we have for each $|\al|\geq 0$,
\[
|\pa^{\al}(1-e^{-\frac{|x|^2}{4\be}})|\lesssim_{\al} \frac {1}{|x|^{|\al|}}\qquad\forall x\neq 0, \ \be>0,
\]
where the constant in the inequality is independent of $\be$. 
Since
\[
\frac 1{\Ga(\ga)}\int_0^{\infty}\frac {1}{\Ga(t)}\int_0^{\infty}e^{-e\be}\be^{t} \frac{d \be}{\be} t^{\ga}\frac{dt}{t}= \frac 1{\Ga(\ga)}\int_0^{\infty}e^{-t} t^{\ga}\frac {dt}t = 1,
\]
we can easily get
\[
|\pa^\al H_r^2(x)| \lesssim_\al \frac 1{|x|^{|\al |}}.
\]

On the other hand, we have
\EQN{
	\frac 1{\Ga(\ga)}\int_0^{\infty}\frac {1}{\Ga(t)} \int_0^{\infty}e^{-\tau}
	\int_0^{\infty} e^{-e\be}  \be^t \frac{d\be}{\be}\tau^{-\frac 12}d\tau t^{\ga}\frac{dt}t 
	&=\int_0^{\infty}e^{-\tau}\tau^{-\frac 12}d\tau
	\lesssim 1.
	}
Therefore, we also obtain
\[
|\pa^\al H_r^1(x)| \lesssim_\al \frac 1{|x|^{|\al |}}.
\]

Finally, since for each $|\al|\geq 0$, we have
\[
\left|\pa^{\al}\left(\frac{x_i}{|x|^2}\right)\right|\lesssim_{\al} \frac 1{|x|^{|\al|+1}}, \qquad \forall x\neq 0,
\]	
the desired estimate \eqref{est.H} follows easily.
\end{proof}

\subsection{Operator norm of $T_\ga$ on $L^p$ }

In this section, we show that $T_\ga$ is bounded in $L^p$ with its operator norm $\norm{T_\ga}_{L^p\to L^p}=1$. 

\begin{lemma}\label{operator.T} Let $\ga>0$ and $f\in C_c^{\infty}(\R^2)$. For any $1\leq p\leq \infty$, we have
	\[
	\norm{T_{\ga}f}_p\leq\norm{f}_p.
	\]
\end{lemma}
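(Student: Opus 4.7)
The plan is to represent $T_\ga$ as an iterated average of heat (or Poisson) semigroups against a probability measure, and then apply Minkowski's inequality together with the fact that the heat and Poisson semigroups are $L^p$-contractions.

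Starting from the identities used in \eqref{pre.int.form} and \eqref{pre.int.form2}, namely
\EQN{
\ln^{-\ga}(e + r)
= \frac{1}{\Ga(\ga)}\int_0^\infty \frac{1}{\Ga(t)}\int_0^\infty e^{-e\be}e^{-r\be}\,\be^t\frac{d\be}{\be}\,t^{\ga}\frac{dt}{t},\qquad r\ge 0,
}
specialized to $r = |k|^2$ (resp.\ $r = |k|$), I would write, for $f \in C_c^\infty(\R^2)$,
\EQN{
T_\ga f(x) = \frac{1}{\Ga(\ga)}\int_0^\infty \frac{1}{\Ga(t)}\int_0^\infty e^{-e\be}\, (S_\be f)(x)\,\be^t\frac{d\be}{\be}\,t^{\ga}\frac{dt}{t},
}
where $S_\be = e^{\be\De}$ in the first case and $S_\be = e^{-\be(-\De)^{1/2}}$ (the Poisson semigroup, represented via Lemma \ref{subo}) in the second. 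Both semigroups are given by convolution against a non-negative kernel of total mass $1$, so by Young's inequality $\norm{S_\be f}_p \le \norm{f}_p$ for every $1\le p\le \infty$ and every $\be > 0$.

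Applying Minkowski's integral inequality twice and using this contraction property yields
\EQN{
\norm{T_\ga f}_p
\le \frac{1}{\Ga(\ga)}\int_0^\infty \frac{1}{\Ga(t)}\int_0^\infty e^{-e\be}\,\norm{S_\be f}_p \,\be^t\frac{d\be}{\be}\,t^{\ga}\frac{dt}{t}
\le C_\ga \norm{f}_p,
}
with
\EQN{
C_\ga = \frac{1}{\Ga(\ga)}\int_0^\infty \frac{1}{\Ga(t)}\int_0^\infty e^{-e\be}\,\be^t\frac{d\be}{\be}\,t^{\ga}\frac{dt}{t}.
}
The substitution $u = e\be$ shows that the inner integral equals $e^{-t}\Ga(t)$, so
\EQN{
C_\ga = \frac{1}{\Ga(\ga)}\int_0^\infty e^{-t}\, t^{\ga-1}\, dt = 1.
}
This gives $\norm{T_\ga f}_p \le \norm{f}_p$ in both cases.

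There is no essential obstacle here; the only thing to verify carefully is that the formal manipulations (Fubini to pull the semigroup out of the double integral, Minkowski to bring the norm inside) are justified, which is straightforward because $f\in C_c^\infty$ makes the integrand absolutely integrable in $(\be,t,x)$ once one notes that $e^{-e\be}\be^{t-1}$ is integrable at $\be = 0$ uniformly for $t$ in compact subsets of $(0,\infty)$ and that the double $(t,\be)$-integral has value $\Ga(\ga)$ as computed above. The sharpness $\norm{T_\ga}_{L^p\to L^p}=1$ is in fact immediate from the multiplier $\ln^{-\ga}(e + |k|^{\alpha}) \to 1$ as $|k|\to 0$ together with a rescaling argument, though only the upper bound $\le 1$ is needed for the applications in the paper.
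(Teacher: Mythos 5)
Your argument is correct and is essentially the paper's own proof: the paper represents the kernel $K_\ga$ of $T_\ga$ as a superposition of heat kernels (via the same double-integral identity and, in the second case, the subordination formula of Lemma \ref{subo}) against a weight of total mass $1$, concludes $\norm{K_\ga}_1 = 1$, and applies Young's inequality, which is just the kernel-level phrasing of your Minkowski-plus-semigroup-contraction argument. The computation that the $(t,\be)$-weight integrates to $1$ is the same in both.
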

\begin{proof} Let $K_{\ga}$ be the kernel for $T_{\ga}$. In other words, $T_{\ga}f = K_{\ga}\ast f$. Then, by Young's inequality, it is enough to show that $\norm{K_\ga}_1=1$. First, consider $T_{\ga} = \ln^{-\ga}(e-\Del)$. Since we have
\EQN{
 \ln^{-\ga}(e+|\xi|^2) 
=\frac 1{\Ga(\ga)}\int_0^{\infty}\frac 1{\Ga(t)}\int_0^\infty e^{-e\be} e^{-|\xi|^2 \be}\be^t \frac{d\be}{\be} t^{\ga} \frac{dt}t,	
}
we take the inverse Fourier transform to get the corresponding kernel 
\EQN{
K_{\ga}(x) 
=\frac 1{\Ga(\ga)}\int_0^{\infty}\frac 1{\Ga(t)}\int_0^\infty e^{-e\be} e^{\be\Del }\del_0(x)\be^t \frac{d\be}{\be} t^{\ga} \frac{dt}t. 
}
Therefore using $\norm{e^{\be\Del }\del_0}_1 =1$, we can easily get $\norm{K_{\ga}}_1=1$. Here, $e^{t\De}\del_0$ is the usual heat kernel. 

Similarly, when $T_{\ga} = \ln^{-\ga}(e+|\na|)$, the integral expression of the kernel is
\EQN{
K_{\ga}(x) 
=\frac 1{\Ga(\ga)}\int_0^{\infty}\frac 1{\Ga(t)}\int_0^\infty e^{-e\be} \frac 1{\sqrt{\pi}}\int_0^\infty e^{-\tau}e^{\frac{\be}{4\tau}\Del }\del_0(x)\tau^{-\frac 12}d\tau \be^t \frac{d\be}{\be} t^{\ga} \frac{dt}t,
}
and hence again $\norm{e^{\frac{\be}{4\tau}\Del }\del_0}_1=1$ implies $\norm{K_{\ga}}_1=1$.
\end{proof}

\section*{Acknowledgments}
The research of the author was partially supported by NSERC grant 261356-13 (Canada).


\begin{thebibliography}{XX}

\bibitem{BL15}
J.~Bourgain and D.~Li.
\newblock Strong ill-posedness of the incompressible {E}uler equation in
  borderline {S}obolev spaces.
\newblock {\em Inventiones mathematicae}, 201(1):97--157, 2015.

\bibitem{Chae04}
D.~Chae.
\newblock Local existence and blow-up criterion for the {E}uler equations in
  the {B}esov spaces.
\newblock {\em Asymptot. Anal.}, 38(3-4):339--358, 2004.

\bibitem{CCW11}
D.~Chae, P.~Constantin, and J.~Wu.
\newblock Inviscid models generalizing the two-dimensional {E}uler and the
  surface quasi-geostrophic equations.
\newblock {\em Arch Rational Mech Anal}, 202:35--62, 2011.

\bibitem{CW12}
D.~Chae and J.~Wu.
\newblock Logarithmically regularized inviscid models in borderline {S}obolev
  spaces.
\newblock {\em Journal of Mathematical Physics}, 53(11):115601, 2012.

\bibitem{CF88}
P.~Constantin and C.~Foias.
\newblock {\em Navier-{S}tokes equations}.
\newblock Chicago Lectures in Mathematics. University of Chicago Press,
  Chicago, IL, 1988.

\bibitem{Dong2015}
H.~Dong and D.~Li.
\newblock Global {$\dot{H}^1 \cap \dot{H}^{-1}$} solutions to a logarithmically
  regularized 2{D} {E}uler equation.
\newblock {\em Journal of Mathematical Fluid Mechanics}, 17(1):1--7, 2015.

\bibitem{Elgindi2017}
T.~M. Elgindi and I.-J. Jeong.
\newblock Ill-posedness for the incompressible {E}uler equations in critical
  {S}obolev spaces.
\newblock {\em Annals of PDE}, 3(1), 2017.

\bibitem{KS14}
A.~Kiselev and V.~{\v{S}}ver{\'a}k.
\newblock Small scale creation for solutions of the incompressible
  two-dimensional {E}uler equation.
\newblock {\em Annals of Mathematics}, 180(3):1205--1220, 2014.

\bibitem{Li16}
D.~Li.
\newblock On {K}ato-{P}once and fractional {L}eibniz.
\newblock {\em Rev. Mat. Iberoam.}, 35:23--100, 2019.

\bibitem{Majda}
A.~L. Majda, Andrew J.~Bertozzi.
\newblock {\em Cambridge Texts in Applied Mathematics : Vorticity and
  Incompressible Flow}.
\newblock Cambridge University Press, 2001.

\bibitem{PP04}
H.~Pak and Y.~Park.
\newblock Existence of solution for the {E}uler equations in a critical {B}esov
  space {$B^1_{\infty,1}(\mathbb R^d)$}.
\newblock {\em Comm. Partial Differential Equations}, 29(7-8):1149--1166, 2004.

\bibitem{Vishik98}
M.~Vishik.
\newblock Hydrodynamics in {B}esov spaces.
\newblock {\em Arch. Rat. Mech. and Anal.}, 145:197--214, 1998.

\bibitem{Vishik99}
M.~Vishik.
\newblock Incompressible flows of an ideal fluid with vorticity in borderline
  spaces of {B}esov type.
\newblock {\em Ann. Sci. \'Ecole Norm. Sup. (4)}, 32(6):769--812, 1999.

\end{thebibliography}

\end{document}